\documentclass[11pt,reqno]{amsart}

\usepackage{amsmath}
\usepackage{amssymb}
\usepackage{amsfonts}
\usepackage[foot]{amsaddr}

\usepackage[left=30mm,right=30mm,top=30mm,bottom=30mm,marginparwidth=70pt]{geometry}
\usepackage{mathrsfs}
\usepackage{hyperref}
\usepackage{cite}

\usepackage[sc]{mathpazo}
\usepackage{graphics}
\usepackage{graphicx}
\usepackage{xcolor}
\usepackage{empheq}
\usepackage{comment}
\usepackage{marginnote}

\newcommand{\Z}{\mathbb{Z}}
\newcommand{\N}{\mathbb{N}}
\newcommand{\R}{\mathbb{R}}
\newcommand{\C}{\mathbb{C}}
\newcommand{\I}{\mathbb{I}}

\newcommand{\B}{\mathscr{B}}
\newcommand{\D}{\mathbf{D}}

\renewcommand{\H}{\mathscr{H}}
\newcommand{\Res}{{\mathscr{R}}}
\newcommand{\J}{\mathscr{J}}

\renewcommand{\P}{\mathscr{P}}

\renewcommand{\rho}{\varrho}

\newcommand{\E}{\mathscr{E}}

\DeclareMathOperator{\dist}{dist}
\DeclareMathOperator{\diam}{diam}
\DeclareMathOperator{\area}{area}
\DeclareMathOperator{\vol}{vol}

\newcommand{\la}{\langle}
\newcommand{\ra}{\rangle}

\newcommand{\eps}{\varepsilon}
\newcommand{\e}{_{\varepsilon}}
\newcommand{\ed}{_{\eps,\delta}}
\newcommand{\ke}{_{k,\varepsilon}}
\newcommand{\ie}{_{i,\varepsilon}}
\newcommand{\je}{_{j,\varepsilon}}

\newcommand{\al}{\alpha}

\renewcommand{\d}{\,\mathrm{d}}

\newcommand{\ds}{\displaystyle}

\newcommand{\Id}{\mathrm{I}}

\newcommand{\suml}{\sum\limits}

\newcommand{\wt}{\widetilde}
\newcommand{\wh}{\widehat}

\newcommand{\ceq}{\coloneqq}
\newcommand{\restr}{\!\restriction}

\newcommand{\A}{\mathscr{A}}

\newcommand{\HS}{\mathscr{H}}

\theoremstyle{plain}
\newtheorem{theorem}{Theorem}[section]
\newtheorem*{theorem*}{Theorem}
\newtheorem{lemma}[theorem]{Lemma}
\newtheorem*{lemma*}{Lemma}

\theoremstyle{remark}
\newtheorem{remark}[theorem]{Remark}
\newtheorem*{remark*}{Remark}

\newtheorem*{example*}{Example}
\theoremstyle{definition}

\numberwithin{equation}{section}

\title
[Homogenization and operator estimates for Steklov problems in perforated domains]
{Homogenization and operator estimates for Steklov problems in perforated domains}

\author[Andrii Khrabustovskyi]{Andrii Khrabustovskyi\,$^1$}
\address{$^1$\,Department of Physics, Faculty of Science, University of Hradec Kr\'{a}lov\'{e}, Rokitansk\'eho 62, 50003 Hradec Kr\'alov\'e, Czech Republic}
\email{andrii.khrabustovskyi@uhk.cz}

\author[Jari Taskinen]{Jari Taskinen\,$^2$}
\address{$^2$\,Department of Mathematics and Statistics, University of Helsinki, P.O.Box 68, Pietari Kalmin katu 5, 00014 Helsinki, Finland}
\email{jari.taskinen@helsinki.fi}

\begin{document}

	\begin{abstract}
		Let  the set $\Omega_\varepsilon$ be obtained from the bounded domain $\Omega$ by removing a family of $\varepsilon$-periodically distributed  identical balls. In $\Omega_\varepsilon$ one considers the   Steklov spectral problem. It is known from \texttt{[Girouard-Henrot-Lagacé, ARMA (2021)]} that, if the radii of the holes shrink at a critical rate such that the surface area of a single hole is comparable to the volume of a periodicity cell, then, in the limit $\varepsilon \to 0$, the Steklov spectrum converges to the spectrum of the problem $-\Delta u=\lambda Q u$ on
		$\Omega$ with some weight $Q>0$.
		In the present work, we extend this result by proving, under fairly general assumptions on the locations and shapes of the holes, convergence of the associated resolvent operators in the operator norm topology, together with quantitative estimates for the Hausdorff distance between the spectra. The underlying domain $\Omega$ is not assumed to be bounded.
	\end{abstract}
	
	\keywords{homogenization, Steklov problem, norm resolvent convergence, operator estimates, spectral convergence}
	\subjclass{35B27, 35P05, 35J25, 47A55}

	\maketitle
	\allowdisplaybreaks	
	\thispagestyle{empty}
	
	\begin{center}
		\emph{Dedicated to the memory of Volodymyr O. Marchenko (1922--2026), a great mathematician}
	\end{center}
	
	\section{Introduction\label{sec1}}
	
	Homogenization in perforated domains is a classical topic in homogenization theory that traces back to the pioneering work by V.\,O.~Marchenko and E.\,Ya.~Khruslov \cite{MK64}.
	This class of problems addresses the asymptotic behaviour of solutions to boundary value problems posed in domains containing a large number of pairwise disjoint holes, in the limit where the number of holes (per unit volume) tends to infinity and their radii tend to zero. 
	
	Most of the existing literature in this area is devoted to Dirichlet, Neumann and Robin  problems (see, for instance,   \cite{CM82,CS79,RT75,Kh79,Ka85,Br88,CD88,MK64} and the monograph \cite{MK06}). In contrast, the homogenization of Steklov spectral problems 
	\begin{gather}\label{Steklov}
		\begin{array}{cl}
			\Delta u\e = 0&\text{ in }\Omega\e,\\[1mm]
			\ds{\partial u\e\over \partial\nu}=\lambda\e u\e&\text{ on }\partial D\ie
		\end{array}
	\end{gather}
	has received comparatively little attention. Hereinafter, $\Omega\e$ is a perforated domain  depending on a small parameter $\eps>0$, which characterizes the typical size of the microstructure; it is obtained 
	by making a lot of holes $D\ie$  in some fixed domain $\Omega$. The problem \eqref{Steklov} must be complemented by suitable boundary conditions on the exterior boundary $\partial\Omega$, but since these conditions do not play an essential role in the homogenization process, we omit them in this introductory section for the sake of brevity. 
	
	The relative lack of attention to Steklov problems is apparently due to the fact that such problems are, in a certain sense, atypical: the spectral parameter $\lambda\e$ enters through the boundary condition rather than through the governing differential operator in the bulk, which leads to limit problems that differ substantially from those arising in the Dirichlet, Neumann and Robin settings and the analysis   require different   techniques.
	
	To the best of our knowledge, the first homogenization result for the Steklov problem \eqref{Steklov} (with a general second-order elliptic operator replacing the Laplacian) was established by M.~Vanninathan in \cite{Va81}, where the problem was studied in a domain perforated by identical holes whose radii are of order $\eps$ and which are distributed 
	$\eps$-periodically. The author proved the convergence $\eps^{-1}\lambda\ke\to \lambda_k$ of the re-scaled 
	$k$th eigenvalue to the 
	$k$th eigenvalue of a certain homogenized elliptic operator with constant coefficients defined on the   unperturbed domain.
	Later on, this result was extended in \cite{Me95}, where 
	the complete asymptotic expansions for the
	eigenvalues and the corresponding eigenfunctions were constructed, and in 
	\cite{Pa95}, where the estimates on  the rate of convergence were derived.
	The numerical aspects of the above problem were investigated in \cite{YMHCJ21}.
	Also, we mention the works \cite{CNP12,Du13}, which address the homogenization of the Steklov problem with a sign-changing weight in front of the spectral parameter, considered within the same periodic geometric setting as in \cite{Va81}.  
	
	Recently, this topic was revived by A.~Girouard, A.~Henrot, and J.~Lagacé in \cite{GHL21}, where the authors investigated connections between the Steklov and Neumann eigenvalues through a homogenization limit of the Steklov problem.
	More precisely, they considered problem \eqref{Steklov}, but with a different scaling of the holes compared to \cite{Va81}. Namely, the radii of the holes $d\e$ were taken to satisfy
	$$
	d\e^{n-1}\eps^{-n} \to \beta > 0,
	$$
	where $n$ denotes the space dimension and $\varepsilon$ is the period. In other words, the holes shrink at a rate such that the surface area of a  hole is comparable to the volume of a period cell.
	This scaling is also known in the homogenization of the Robin Laplacian, where, in the homogenization limit, an additional potential term appears (see \cite{Ka85} for  details).

	It was proved in \cite{GHL21} that the homogenization of problem \eqref{Steklov}, subject to Steklov boundary conditions on the exterior boundary $\partial\Omega$, leads to the following spectral problem on the unperturbed domain $\Omega$:
	\begin{gather}\label{Steklov:hom}
		\begin{array}{cl}
			-\Delta u  = \lambda Q u &\text{ in }\Omega .
		\end{array}
	\end{gather}
	with Steklov boundary conditions on $\partial\Omega$.
	Here  $Q=\beta \varkappa_n$ with $\varkappa_n$ denoting the area of a unit sphere in $\R^n$. 
	Note that the choice of boundary conditions on  $\partial\Omega$ plays no essential role at this stage. The reason the authors chose to impose Steklov boundary conditions on $\partial\Omega$ is that they subsequently performed an asymptotic analysis of problem \eqref{Steklov:hom} as $\beta \to \infty$; in this regime, suitably rescaled eigenvalues of \eqref{Steklov:hom} converge to the eigenvalues of the Neumann Laplacian in $\Omega$. 
	
In subsequent works \cite{GL21,GKL21}, A.~Girouard, M.~Karpukhin and J.~Lagacé extended the result of \cite{GHL21} to the case where the holes are distributed along (generally non-flat) manifolds. The resulting homogenized spectral equation retains the same form; however, $Q$ is no longer constant but becomes a function.
In fact, it was shown in \cite{GKL21} that, by an appropriate choice of the holes, one can generate an arbitrary weight  $Q$   (from appropriate $L^p$ or 
Orlicz spaces).

	Notably, the above homogenization result has been used in \cite{GHL21,GL21,GKL21} to establish several non-trivial results in geometric analysis. For instance, it was proven in \cite{GL21} that Kokarev's  upper bound $8\pi$ for the first nonzero normalized Steklov eigenvalue on orientable genus-zero surfaces is saturated.
	\smallskip
	
	The goal  of the present work is to extend the homogenization result obtained in \cite{GHL21}. We consider problem \eqref{Steklov} posed in a perforated domain $\Omega\e$. Following \cite{Va81} we impose Dirichlet boundary conditions on the exterior boundary; however, all our results remain valid (up to minor technical modifications) if, as in \cite{GHL21}, one instead prescribes Steklov boundary conditions on $\partial\Omega$.
	We impose fairly general assumptions on the distribution and shapes of the holes (subject to certain uniform bounds; see \eqref{assump:shape1}--\eqref{assump:shape4}). 
	Furthermore, we allow the domain $\Omega$ to be unbounded; the assumptions imposed on the holes ensure that the associated Steklov problem is well defined. The study of Steklov problems in unbounded domains is partly motivated by applications to the linear water-wave problem; see, e.g., 
	\cite{CNT18,NT23,CCNT22}.
	
	Our first main result, Theorem~\ref{th1}, concerns the convergence of the resolvent operators $\Res\e$ associated with problem \eqref{Steklov} to the resolvent operator $\Res$ corresponding to the homogenized problem \eqref{Steklov:hom} in a uniform operator topology. Moreover, we provide quantitative bounds for the rate of this convergence. Namely, we prove the following estimate:
	\begin{gather}\label{mainest:1}
		\|\Res\e\J\e -\J\e\Res\|_{H^1(\Omega)\to H^1(\Omega\e)} \leq C\delta\e.
	\end{gather}
	Here, $\J\e$ denotes the restriction from $\Omega$ to $\Omega\e\subset\Omega$, 
	$\|T\|_{X \to Y}$ denotes the operator norm of a bounded linear operator $T$ acting from the Banach space $X$ into the Banach space $Y$, 
	$\delta\e$ is an explicitly given error rate, and the constant $C$ depends   on $\Omega$ and the above-mentioned geometric bounds on the holes.
	
	In homogenization theory, estimates of the form \eqref{mainest:1} are commonly referred to as \emph{operator estimates}. They originate from the pioneering works \cite{BS04,BS06,Gr04,Z05a,Z05b,ZP05,Gr06}, which addressed periodic elliptic operators with rapidly oscillating coefficients.
	Over the last decade, such kind estimates have been established for various boundary value problems in perforated domains. The Dirichlet problem was studied in \cite{AP21,KP18}, the Neumann problem in \cite{Su18,AP21,ZP05}, and the Robin problem in \cite{KP22}.
	More recently, these results were substantially strengthened in \cite{BK22,Bo23a,Bo23b}, where sharp estimates for Dirichlet and (nonlinear) Robin holes were obtained under very general assumptions on the geometry of the perforations. We also mention the earlier work \cite{BCD16}, which concerns surface distributions of holes with either Dirichlet or Robin boundary conditions. To the best of our knowledge, operator estimates in the homogenization of Steklov problems in perforated domains have not yet been addressed, neither for holes of the same order as the period, as in \cite{Va81}, nor for small holes, as in \cite{GHL21}.
	\smallskip
	
	Our second result,  Theorem~\ref{th2}, concern the convergence of spectra $\sigma(\Res\e)$ of the operator $\Res\e$;
	note that the eigenvalues $\mu\e$ of $\Res\e$ and the 
	Steklov eigenvalues $\lambda\e$ are linked 
	via the relation $\mu\e=(\lambda\e+1)^{-1}$.
	We prove the following bound:
	\begin{gather}\label{mainest:2}
		{\dist_H}(\sigma(\Res\e),\sigma(\Res))\leq C\delta\e,
	\end{gather}
	where $\dist_H(\cdot,\cdot)$ stands for the Hausdorff distance.
	
	Note that for two normal bounded operator acting at the same Hilbert space $\HS$, one has the following bound  \cite{HN99}: 
	$$\dist_H(\sigma(\Res_1),\sigma(\Res_2))\leq \|\Res_1-\Res_2\|_{\HS\to\HS}.$$
	However, in our setting the operators $\Res\e$
	and $\Res$ act on different Hilbert spaces. Consequently, the bound \eqref{mainest:2} does not follow directly from the estimate \eqref{mainest:1}. Establishing \eqref{mainest:2} therefore requires additional analysis.		
	\smallskip
	
	The paper is organized as follows. In Section~\ref{sec2}, we formulate the problem and state the main results. Section~\ref{sec3} contains auxiliary estimates used in the analysis. Finally, Section~\ref{sec4} is devoted to the proofs of the main results.

	\section{Problem setting\label{sec2}}

	\subsection{Geometry}
	
	Let $\Omega$ be an open domain in $\R^n$,  $n\geq 2$.
	We assume that $\partial\Omega$ is uniformly regular of class $C^2$ (see, e.g.\cite[Definition~1]{Br61}). 	This requirement is merely technical, for example, it is needed   to enjoy the global $H^2$-regularity of the solution to the homogenized problem. Also, it implies the existence
	of a constant $C_\Omega>0$ such that the inner $C_\Omega$-neighborhood of $\partial\Omega$
	is contained in $\Omega$ and
	the  map 
	\begin{gather}\label{tau}
		(x, t) \mapsto x -  t \nu(x)
	\end{gather}
	is one-to-one on $\partial\Omega \times [0, T]$ for any $T\le C_\Omega$;
	here $\nu(x)$ stands for the unit
	outward-pointing normal vector at $x\in \partial\Omega$.

	Let $\eps>0$ be a small  parameter. To simplify the presentation, we assume that it takes
	discrete values, specifically $\eps=m^{-1}$ 
	for $m\in\N$ (so that $\eps\to 0$ corresponds to $m\to\infty$). This
	assumption allows us to avoid repeatedly stating `for sufficiently small $\eps$' in our arguments: for instance, if a sequence $a\e$ converges as $\eps=m^{-1}\to 0$, it is automatically bounded, 	whereas for a continuously varying $\eps$, this holds for small enough $\eps$.
	
Next, we introduce a family of domains, each of which will  accommodate a hole.
	For any fixed $\eps>0$, let $\{\Gamma\ie\}_{i\in\I\e}$ be a family open sets in $\R^n$, $n\ge 2$; here the index set $\I\e $ is either finite
	or countable.
	We assume that  
	\begin{gather} 
		\text{each }\Gamma\ie\text{ is  {a convex polytop}},
		\qquad
		\label{Gamma:cond:1}
		\Gamma\ie\cap\Gamma\je=\emptyset,\ i\not=j,\qquad
		\cup_{i\in\I\e }\overline{\Gamma\ie}\subset\overline{\Omega}.
	\end{gather} 
Let $r\ie$ and $\wt r\ie$ be  the inner and the outer radii of   $\Gamma\ie$, i.e., the radius of the largest  ball contained in $\Gamma\ie$ and the radius of the smallest ball containing it, respectively. 
We  denote 
\begin{gather}
\label{re:Te:def}
r\e\ceq \sup_{i\in\I\e}r\ie,\qquad 
T\e\ceq\Omega\setminus\overline{ \cup_{i\in\I\e}\Gamma\ie }.
\end{gather}

We impose the following assumptions on the sets $\Gamma\ie$:
\begin{gather} 
		\label{Gamma:cond:2}
		\exists \wt C>0\ \forall\eps>0\ \forall i\in \I\e :\quad 
		\wt r\ie\leq \wt C r\ie,
\\
 \label{assump:re}
 	r\e\to 0\text{ as }\eps\to 0,
 \end{gather}
 and, moreover, there is
 a sequence $(\rho\e)_{\eps>0}$ such that 
 \begin{gather}\label{assump:Te}
 	\rho\e\in (0,C_\Omega),\quad \lim\limits_{\eps\to 0}\rho\e= 0,\quad
 	T\e\subset   \{y= x - t \nu(x)\in\R^n:\  x\in\partial\Omega,\ t\in (0,\,\rho\e)\}.
 \end{gather} 
 
	Now, let for each $\eps>0$  we 
	are given with a family of  Lipschitz domains $\{D\ie,\ i\in\I\e\}$
	satisfying
	$\overline{D\ie}\subset\Gamma\ie$; we assume that 
	the complement of each $D\ie$ in $\R^n$ is a connected set.
	Further assumptions on $D\ie$ will be given in the next subsection.

	Finally,  we define the perforated domain $\Omega\e$ as follows (see  Figure~\ref{fig2}), 
	\begin{gather}\label{Omega:e}
		\Omega\e \ceq \Omega \setminus\left( \bigcup_{i \in \I\e}\overline{ D\ie}\right).
	\end{gather}   
	
	\begin{figure}[t]
	\begin{picture}(200,145)
		\includegraphics[width=0.6\linewidth]{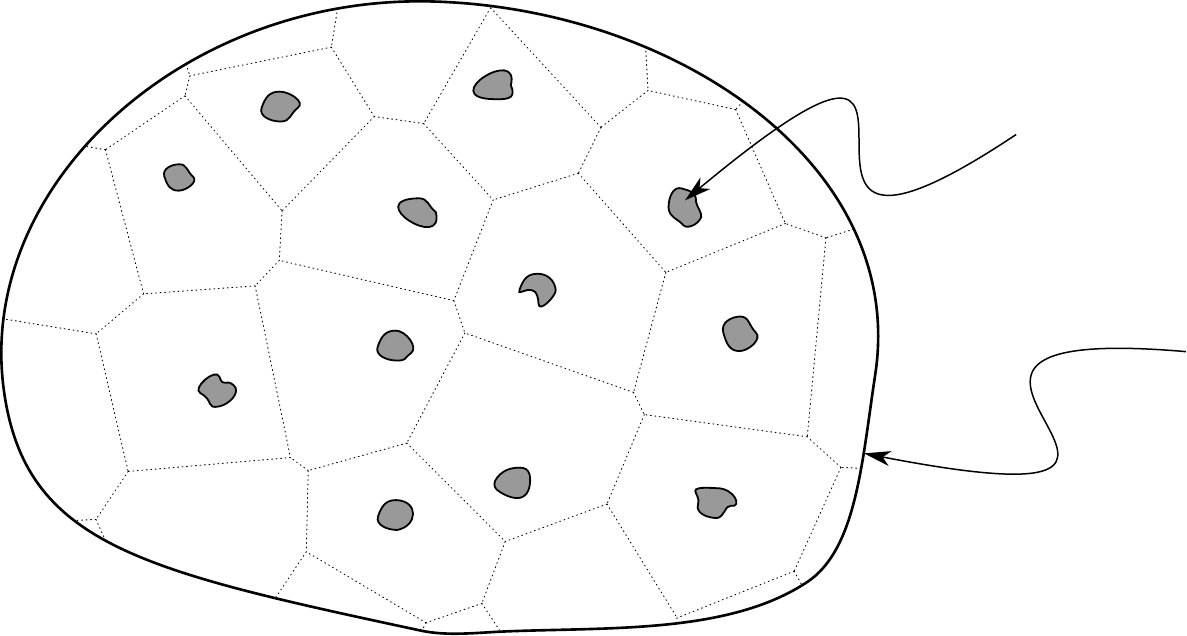}
		\put(-37,111){$D\ie$}
		\put(2,60){$\partial\Omega$}
	\end{picture}
	
	\caption{\label{fig2}The perforated domain $\Omega\e$ defined by \eqref{Omega:e}. The dotted lines correspond to the boundaries of the cells $\Gamma\ie$.}
\end{figure}

	We conclude this subsection with two examples of families $\{\Gamma\ie\}_i$ satisfying assumptions \eqref{Gamma:cond:1} and \eqref{Gamma:cond:2}. 

	 	The first examples  
	is given by a \emph{Voronoi diagram} generated by some countable set of points 
	$\mathcal{P}\e\subset\R^n$, see Figure~\ref{fig1}, left-hand picture. 
	The corresponding Voronoi cells (we denote them by $\Gamma\ie$ with $i\in\N$)  are pairwise disjoint, moreover, 
	 each cell is a convex polytope. 
	We also assume that the convex hull of $\mathcal{P}_\varepsilon$ coincides with $\mathbb{R}^n$ -- this assumption ensures that all $\Gamma\ie$   are bounded and that the union of their closures covers the entire   $\R^n$. Now, we set $\I\e=\{i\in\N:\ \Gamma\ie\subset\Omega\}$, then
	 the family $\{\Gamma\ie\}_{i\in\I\e}$ satisfies \eqref{Gamma:cond:1}.
	Next, we define the
	\emph{separation distance} $\mu\e$ and the \emph{covering radius} $\wt\mu\e$ 
	of the set $\mathcal{P}\e$
 as follows,
	\begin{align*}
		\mu\e&\ceq {\inf}_{x,y\in\mathcal{P}\e,\,x\not=y}\dist(x,y),\qquad
		\wt\mu\e\ceq {\sup}_{x\in\R^n}\dist(x,\mathcal{P}\e).
	\end{align*}
	It is easy to show that the inner radius $r\ie$  and the outer radius  $\wt r\ie$ of each $\Gamma\ie$  satisfy
	\begin{gather}	
		\label{mu:ineqs}
		\mu\e\leq 2r\ie,\quad \wt r\ie\leq\wt\mu\e.
	\end{gather}  
	Then, to ensure \eqref{Gamma:cond:2}, we assume that for each $\eps>0$ one has $\mu\e>0$, $\wt\mu\e<\infty$, and, moreover, $\mu\ceq \sup_{\eps>0}\wt\mu\e/\mu\e <\infty$.
	Under these assumptions,   \eqref{Gamma:cond:2}
	holds by  \eqref{mu:ineqs}, with $\wt C=2\mu$.
 
 The second example is obtained by taking the sets $\Gamma\ie$
 to be $n$-dimensional rectangles arranged so that \eqref{Gamma:cond:1} holds. In order to fulfill \eqref{Gamma:cond:2} 
	each rectangle is required to satisfy a uniform shape-regularity condition: the ratio between its longest and shortest side lengths is bounded uniformly by some absolute (i.e., independent of $\varepsilon$ and $i$) constant. Note that this condition does not preclude the rectangles from having vastly different sizes; see Figure~\ref{fig1} (right-hand picture).

	\begin{figure}[b]
		\includegraphics[width=0.4\linewidth]{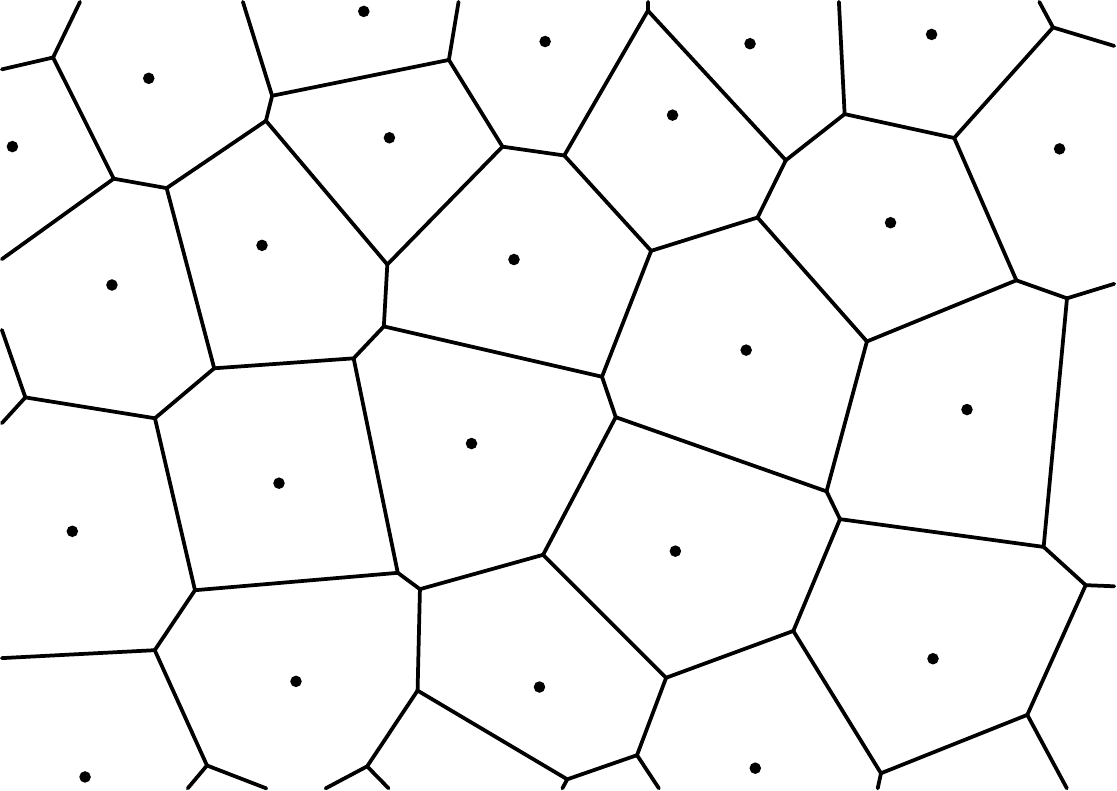}\qquad\qquad
		\includegraphics[width=0.4\linewidth]{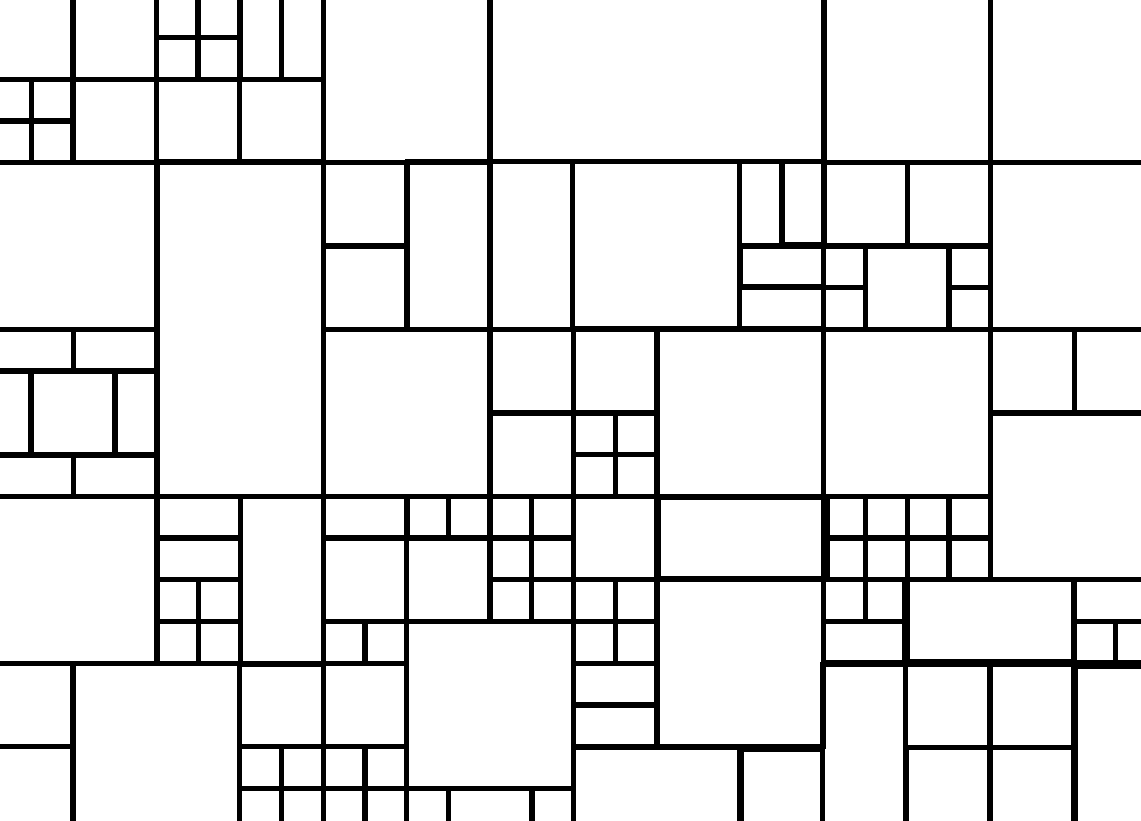}
		\caption{Examples of the families $\{\Gamma\ie\}$.
			\emph{Left:} a Voronoi tessellation generated by a countable set of points. 
			\emph{Right:} a rectangular tessellation.
			\label{fig1}}
	\end{figure}

	\subsection{Assumption on the `holes' $D\ie$}
	
	In what follows, we denote by $\B(r,z)$ the open ball in $\R^n$ of radius $r>0$ and center $z\in\R^n$.

	\subsubsection*{Assumptions on the shapes of $ D\ie $}\label{subsec:22}
	We formulate our assumption in terms of the upscaled sets
	$$
	\D \ie\ceq (d\ie )^{-1}(D\ie-x\ie),
	$$
	where 	by $d\ie$ and $x\ie$ we denote
	the radius and   center of the smallest ball containing   $D\ie$.
	Evidently, the smallest ball containing the set $\D\ie$ is $\B(1,0)$, i.e. the unit ball with the center at the origin.
	Note that by the convexity of $\Gamma\ie$, for each $ i\in\I\e$ the point $x\ie$ belongs to $\Gamma\ie$.
	We   denote 
	$$\mathbf{G}\ie\ceq \B(2,0)\setminus\overline{\D\ie},$$
	and  introduce the linear bounded operator  $$\mathbf{P}\ie:H^1(\mathbf{G}\ie)\to H^1(\B(2,0))$$ being defined by 
	$\mathbf{P}\ie u=\overline{u}$, where $\overline{u}$ stands for the harmonic extension on $\D\ie$,
	i.e., $\Delta \overline{u}=0$ on $\D\ie$, and $u=\overline{u}$ on $\mathbf{G}\ie$. 
	Then
	our assumptions read as follows:
	\begin{itemize} \setlength\itemsep{0.5em}
		\item The inner radii of the sets $\D \ie $ are bounded away from zero uniformly in $\eps $ and $i $:
		\begin{gather}\label{assump:shape1}
			\exists C_{\rm inn}>0\
			\forall \eps>0\ \forall i\in\I\e\
			\exists z \ie\in\R^n:\
			\B(C_{\rm inn},z\ie)\subset\D\ie.
		\end{gather}
		
		\item The trace operators $H^1(\D\ie)\to L^2(\partial \D\ie)$
		are bounded uniformly in $\eps$ and $i$:
		\begin{gather}\label{assump:shape2}
			\exists C_{\rm tr}>0\
			\forall \eps>0\ \forall i\in\I\e:\
			\|u\|_{L^2(\partial \D\ie)}\leq C_{\rm tr}\|u\|_{H^1(\D\ie)},\
			\forall u\in H^1(\D\ie).
		\end{gather}
		
		\item The smallest non-zero eigenvalues of the Neumann Laplacian on  $\mathbf{G}\ie$ are bounded away from zero uniformly in $\eps$ and $i$:
		\begin{gather}\label{assump:shape3}
			\exists C_{\rm N}>0\
			\forall \eps>0\ \forall i\in\I\e:\
			C_{\rm N}\le\Lambda_{\rm N}(\mathbf{G}\ie).
		\end{gather}
		Hereinafter, by $\Lambda_{\rm N}(D)$ we denote the smallest non-zero eigenvalue of the Neumann Laplacian on a domain $D\subset\R^n$.
		
		\item
		The operators $\mathbf{P}\ie$ are bounded uniformly in $\eps$ and $i$:
		\begin{align}
		\exists C_{\rm P}>0\ \forall\eps>0\ \forall i\in\I\e:\label{assump:shape4} \quad\| \mathbf{P}\ie u\|_{H^1(\B(2,0))}\leq C_{\rm P}\|  u\|_{H^1(\mathbf{G}\ie)},\ \forall u\in H^1(\mathbf{G}\ie).
		\end{align}

	\end{itemize}
	
	At the end of this section, we discuss the assumptions \eqref{assump:shape1}--\eqref{assump:shape4} in more detail. 
	
	\subsubsection*{Assumptions on the location and sizes of $D\ie$}

	We introduce the numbers
	\begin{gather*}
		{Q}\ie\ceq \frac{\area(\partial D\ie)}{ \vol(\Gamma\ie)},\ i\in\I\e.
	\end{gather*}
	Hereinafter, by $\vol(D)$ we denote the volume of
	a domain $D\subset\R^n$, by $\area(S)$ 
	we denote the area of a closed $(n-1)$-dimensional hypersurface $S$ in $\R^n$, i.e. $\area(S)=\int_S \d s_x$, where
	$\d s_x$ is the density of the surface measure on $S$.
	
	We also define  the   piecewise constant function ${Q}\e:\R^n\to\R$ by
	$$ 
	{Q}\e(x)=\sum_{i\in\I\e}{Q}\ie \mathbb{1}_{\Gamma\ie}(x),
	$$
	where $\mathbb{1}_{\Gamma\ie}$ is the indicator function of $\Gamma\ie$.
	Note that although  $Q\e$ as well as its limit $Q$ introduced further (see  \eqref{Q:basic}--\eqref{assump:main}) are supported in $\Omega$, we   prefer to regard them as functions defined on the whole space    $\R^n$. This convention will simplify the presentation.

For $k\in\Z^n$ we denote 
$$\square_k\ceq [0,1]^n+k.$$

We impose the following conditions:
	\begin{itemize}  
		
			\item The sets $D\ie$ are located on a certain `secure' distance from the cell boundary:
		\begin{gather}\label{assump:D1}
			\exists C_{\rm sec}>0\ \forall\eps>0\ \forall i\in\I\e:\	
			C_{\rm sec}r\ie\le \dist(x\ie,\partial\Gamma\ie) .	
		\end{gather}
		
		\item The sets $D\ie$ are not very large, namely 
		\begin{gather}\label{assump:D1+}
			\forall\eps>0\ \forall i\in\I\e:\	
			2d\ie\leq C_{\rm sec}r\ie  .	
		\end{gather}
	 
		\item  The numbers $Q\ie$ are  bounded away from zero uniformly in $\eps$ and $i$:
		\begin{gather}\label{assump:Q1}
			\exists C^-_Q>0\ \forall\eps>0\ \forall i\in\I\e: 	\quad C^-_Q\leq Q\ie.
		\end{gather}	
		
		\item The sequence of functions $Q\e$ converges as $\eps\to 0$ in the following sense: there exist    $Q:\R^n\to\R$ 
		and (for $n=2$) a  number $\sigma>0$ such that
		\begin{gather}\label{Q:basic}
			Q\in L^{2\mu}_{\rm loc}(\R^n),\quad
			Q=0\text{ in }\R^n\setminus\overline{\Omega},
			\\
			\label{assump:Q2}
			\exists C^+_Q>0\ 	\forall k\in\Z^n:\ 
			\|{Q}\|_{L^{2\mu}(\square_k )}\le C^+_Q,
		\end{gather}
		where 
		\begin{gather}\label{mu}
			\mu\ceq \begin{cases}n/2,&n\ge3,\\1+\sigma,&n=2,\end{cases}
		\end{gather}
		and, moreover, we have
			\begin{gather}\label{assump:main}
			\kappa\e\ceq\sup_{k\in\Z^n}\|{Q}\e-{Q}\|_{L^\mu(  \square_k )}\to 0.
		\end{gather}

	\end{itemize}
	Note that  \eqref{assump:Q1} combined with \eqref{assump:main} and \eqref{assump:Te} implies easily that
	\begin{gather}\label{rho>0}
		C_Q^-\le {Q}(x)\text{ for a.a. }x\in\Omega.
	\end{gather}

	\begin{remark}
		The stronger regularity assumption imposed on the limiting function $Q$ (namely, $Q \in L^{2\mu}_{\rm loc}(\R^n)$ rather than only $L^{\mu}_{\rm loc}(\R^n)$, cf.~\eqref{assump:main}) is in fact essential for our analysis. This assumption is used repeatedly throughout the proofs of the main results; in particular, it is needed to ensure the $H^2$-regularity of solutions to the homogenized equation.
	\end{remark}

\begin{remark}
	The presence of the cubes $\square_k$ in the assumptions \eqref{assump:Q2}, \eqref{assump:main}  is required only when the domain $\Omega$ is unbounded.
	If $\Omega$ is bounded, then assumptions  \eqref{Q:basic},   \eqref{assump:Q2},   \eqref{assump:main} may be replaced by
	\begin{gather*}
		\|Q\e-Q\|_{L^\mu(\Omega)}  \to 0,
		\ \text{where }
		Q \in L^{2\mu}(\Omega).
	\end{gather*}
	
\end{remark}

		\begin{remark} \label{rem:Q}
	It is easy to see that for any function $Q$ satisfying \eqref{Q:basic}, \eqref{assump:Q2}, \eqref{rho>0}, one can construct a family of holes $\{D\ie,\ i\in\I\e,\ \eps>0\}$ satisfying all the assumptions stated above.
	To prove this, we assume, for simplicity, that $\Omega$ is bounded and that $n\ge 3$; in particular, this implies that $Q\in L^{n}(\Omega)$. Given $Q$, we define the function $\wt Q\e:\R^n\to \R$ by
		\begin{gather}
			\label{wtQ:def}
			\wt Q\e(x)=\sum_{i\in\I\e}
			\wt Q\ie \mathbb{1}_{\Gamma\ie}(x),\text{ where }
			\wt Q\ie\ceq  {1\over \vol(\Gamma\ie)}\int_{\Gamma\ie}Q(x)\d x.
		\end{gather}
		Using \eqref{Gamma:cond:2}, \eqref{assump:re}, \eqref{assump:Te} one can easily show  that 
		\begin{gather}\label{wtQ}
		\forall p\in [1,n]:\quad \|\wt Q\e-Q\|_{L^p(\Omega)} \to 0\text{ as }\eps\to 0.
		\end{gather}
		We choose $D\ie$ to be balls of the radius 
		\begin{gather}\label{die}
		d\ie = \left(\frac{\wt Q\ie \vol(\Gamma\ie)}{\area(\partial\B(1,0))} \right)^{1/(n-1)}=(\area(\partial\B(1,0)))^{-1/(n-1)}\left(\int_{\Gamma\ie}Q(x)\d x\right)^{1/(n-1)}.
		\end{gather}
		With this choice of $d\ie$, the functions $Q\e$ and $\wt Q\e$ do coincide, and hence, by \eqref{wtQ}, the property \eqref{assump:main}
		 holds true.
		Furthermore, the re-scaled sets  
		$\D\ie$ are unit balls, whence the properties \eqref{assump:shape1}--\eqref{assump:shape4} are automatically satisfied.  
		Choosing the center $x\ie$ of the ball $D\ie$ at
		the center of the largest ball contained in $\Gamma\ie$,
		we conclude that   \eqref{assump:D1} holds
		with $C_{\rm sec}=1$ (in fact, one has the equality in \eqref{assump:D1}). It remains to verify the fulfillment of \eqref{assump:D1+}  (at least for sufficiently small $\eps$). Using  H\"older's inequality and \eqref{Gamma:cond:2}, we get 
		\begin{gather*}
		\int_{\Gamma\ie}Q(x)\d x\leq  
		(\vol(\Gamma\ie))^{(n-1)/n}\|Q\|_{L^{n}(\Gamma\ie)}
		\leq Cr\ie^{n-1} \|Q\|_{L^{n}(\Gamma\ie)},
		\end{gather*}
		whence we conclude the estimate
		\begin{gather*}
				d\ie\leq C r\ie \|Q\|_{L^{n}(\Gamma\ie)}^{1/(n-1)}.
		\end{gather*}
		Now, since $Q\in L^n (\Omega)$ and, due to \eqref{Gamma:cond:2} and \eqref{assump:re}, 
		$\sup_{i\in\I\e}\vol(\Gamma\ie)\to 0$ as $\eps\to 0$, 
		then $\sup_{i\in\I\e}\|Q\|_{L^{n}(\Gamma\ie)}\to 0 $ as $\eps\to0$.
		Hence, indeed $2d\ie\leq r\ie$ for small enough $\eps$.
		
	\end{remark}

	\subsection{The operator $\Res\e$}
	
	Next, we describe the   operator  $ \Res\e $, which will be the main object of our interest. In order to do this, we need to the following bounds: 
	\begin{multline}\label{assump:e}
		\forall\eps>0\ \exists C\e^\pm>0:\\ C\e^-\|u\|^2_{H^1(\Omega\e)}\leq 
		\|\nabla u\|^2_{L^2(\Omega\e)}+\sum_{i\in\I\e}\|u\|^2_{L^2(\partial D\ie)} \leq
		C\e^+\|u\|^2_{H^1(\Omega\e)},\ \forall u\in H^1(\Omega\e).
	\end{multline}	
	Evidently, when $\Omega$ is a bounded domain and the index set $\I\e$ is finite, both bounds in \eqref{assump:e} are satisfied.
	It turns out that if  $\Omega$ is an unbounded set and $\I\e$ is countable,
	then  \eqref{assump:e} also holds true -- owing to the above assumptions we impose on the geometry of $\Omega\e$. To simplify the presentation, we postpone the proof of this statement to Section~\ref{sec3}.   In fact, we will show that 
	the bounds \eqref{assump:e} hold with $\eps$-independent constants
	(cf.~\eqref{th1:norms}).
	
	We introduce the subspace $\HS\e$ of $H^1(\Omega\e)$,
	\begin{align*}
		\HS\e\ceq \{u\in H^1(\Omega\e):\ u=0\text{ on }\partial\Omega\},
	\end{align*}
	and define  the scalar product on $\HS\e$ by
	\begin{align}\label{He:scalar}
		(u,v)_{\HS\e}&\ceq (\nabla u,\nabla v)_{L^2(\Omega\e)}+\sum_{i\in\I\e}(u,v)_{L^2(\partial D\ie)}.
	\end{align}
	By virtue of \eqref{assump:e} the right-hand side of \eqref{He:scalar} remains finite, even if the index set $\I\e$ is infinite, and the norm $\|\cdot\|_{\HS\e}=(\cdot,\cdot)_{\HS\e}^{1/2}$ is equivalent to the standard Sobolev norm in $H^1(\Omega\e)$. Thus $\HS\e$ equipped with the scalar product $(u,v)_{\HS\e}$ is  a Hilbert space.

	For $f\in\HS\e$ we consider the conjugate-linear bounded functional  
	$F\e:\HS\e\to\C$ given by
	$$F\e[v]=\sum_{i\in\I\e}(f,v)_{L^2(\partial D\ie)}.$$
	By the Riesz representation theorem there exists a unique 
	$u\e \in \HS\e$ such that
	\begin{gather}\label{variat:eps}
		(u\e,v)_{\HS\e}=F\e[v],\quad \forall v\in \HS\e.	
	\end{gather}
	Then we define the operator $\Res\e:\HS\e\to\HS\e$ by $\Res\e f\ceq u\e$.  Its is easy to see that this operator is linear, bounded, non-negative and self-adjoint. 
	The function $u\e=\Res\e f$ is a weak solution to the problem
	\begin{gather*}
		\Delta u\e=0\text{ on }\Omega\e,\qquad
		{\partial u\e\over\partial\nu}+u\e=f\text{ on }\partial D\ie,\ i\in\I\e,\qquad
		u\e=0\text{ on }\partial\Omega,
	\end{gather*}
	where ${\partial\over\partial \nu}$ stands for the derivative along the
	exterior unit normal.
	Finally,  we observe that if the domain $\Omega\e$ is bounded, then  $\Res\e$ is a compact operator -- this follows   from the compactness of the trace operator
	$H^1(\Omega\e)\to L^2(\partial\Omega\e)$ and the bound 
	$$\forall f\in \HS\e:\quad \|\Res\e f\|_{\HS\e}\le \|f\|_{L^2(\cup_{i\in\I\e}\partial D\ie)}$$
	(the latter inequality is obtained  by testing \eqref{variat:eps} with $v=u\e$).

	The correspondence between the spectrum of $\Res\e$ and the 
	Steklov spectrum is as follows: 
	the number $\mu\e\not=0$ is an eigenvalue of $\Res\e$ with the eigenfunction $u\e$ iff
	$\lambda\e=\mu\e^{-1}-1$ is an eigenvalue of the Steklov problem 
	\begin{gather*}
		\Delta u\e=0\text{ on }\Omega\e,\qquad
		{\partial u\e\over\partial\nu}=\lambda\e u\e\text{ on }\partial D\ie,\ i\in\I\e,\qquad
		u\e=0\text{ on }\partial\Omega,
	\end{gather*}
	with the same eigenfunction $u\e$.
	
	Our goal is to describe the behaviour of $\Res\e$ and its spectrum as $\eps\to 0$.

	\subsection{Main results}
	
	To define the anticipated limiting operator $\Res$, 
	we introduce the Hilbert space $\HS=H^1_0(\Omega)$ with the weighted scalar product
	\begin{gather}\label{HS:product}
	(u,v)_{\HS}\ceq(\nabla u,\nabla v)_{L^2(\Omega)}+
	\int_{\Omega}{Q}(x)u(x)\overline{v(x)}\d x.
	\end{gather}
	Here ${Q}$ is a function standing 
	in the assumption \eqref{assump:main}; recall that this function is bounded away from zero (see~\eqref{rho>0})  and satisfies \eqref{Q:basic}--\eqref{assump:Q2}. 
	The assumptions we imposed on   $Q$ guarantees that the integral \eqref{HS:product}  is well-defined. Indeed, let $\nu$ be given by 
	\begin{gather}\label{nu}
		\nu\ceq 
		\begin{cases}
			2n/(n-2),&n\ge 3,\\
			2+2\sigma^{-1},&n=2
		\end{cases}
	\end{gather}
	(recall that $\sigma$ is the number appearing in \eqref{assump:main} as $n=2$, see \eqref{mu}). The Sobolev embedding theorem (see, e.g., \cite[Theorem~5.4]{Ad75}) implies that the space $H^1(\square_k)$  is continuously embedded to the space 
	$L^\nu(\square_k)$. Since  all $\square_k$ are congruent, the norm of this embedding is independent on $k$:
	\begin{gather}\label{Sobolev:nu}
	\exists C_\nu>0\ \forall k\in\Z^n\ \forall u\in H^1(\square_k):\quad \|u\|_{L^\nu(\square_k)}\leq C_\nu\|u\|_{H^1(\square_k)}.
	\end{gather}
	We denote by  $\E :H^1_0(\Omega)\to H^1(\R^n)$ the operator of extension by zero.
 Then, using H\"older's
	inequality (note that $\mu^{-1}+2\nu^{-1}=1$, where $\mu$ is defined in \eqref{mu}), we get
	\begin{align}\notag
		&\left|\int_{\Omega}{Q}(x)u(x)\overline{v(x)}\d x\right|=
		\left|\sum_{k\in\Z^n}\int_{\Omega\cap\,\square_k}{Q}(x)u(x)
		\overline{v(x)}\d x\right|
		\leq
		\sum_{k\in\Z^n}\int_{\square_k}\left|{Q}(x)\E u(x)
		\overline{\E v(x)} \right|\d x
		\\\notag
		&\quad\leq 
		\sum_{k\in\Z^n}
		\|Q\|_{L^\mu(\square_k)}
		\|\E u\|_{L^\nu(\square_k)}
		\|\E v\|_{L^\nu(\square_k)}
		\leq C_Q^+\sum_{k\in\Z^n}
		\|\E u\|_{H^1(\square_k)}
		\|\E v\|_{H^1(\square_k)}
		\\
		&\quad \leq 
		C^+_Q
		\left(\sum_{k\in\Z^n}\|\E u\|^2_{H^1(\square_k)}\right)^{1/2}
		\left(\sum_{k\in\Z^n}\|\E v\|^2_{H^1(\square_k)}\right)^{1/2}=
		C^+_Q\|u\|_{H^1(\Omega)}\|v\|_{H^1(\Omega)}
		\label{Quv:sup}
	\end{align}
	(here we used \eqref{assump:Q2} and the simple bound
	$
		\|Q\|_{L^\mu( \square_k)}\leq
		\|Q\|_{L^{2\mu}( \square_k)}$). Note that the  estimate \eqref{Quv:sup} remains valid if  $Q\in L_{\rm loc}^{\mu}(\R^n)$, 
	$\|{Q}\|_{L^{\mu}(\square_k )}\le C^+_Q$;
	the stronger assumptions \eqref{Q:basic}--\eqref{assump:Q2} are required to
	guarantee $Qf\in L^2(\Omega)$ for any $f\in\HS$, see \eqref{QL2}.
	It follows from \eqref{Quv:sup}  that 
	the   expression for $\|\cdot\|_{\HS}$ is well-defined, moreover, due to \eqref{rho>0}, \eqref{Quv:sup}, we have
	\begin{gather}\label{norm:equivs}
	\exists C_1,C_2>0\ \forall u\in\HS:\quad	C_1\|u\|_{H^1(\Omega)}\leq \|u\|_{\HS}\leq C_2\|u\|_{H^1(\Omega)}.
	\end{gather}

	Let $f\in\HS$. We define the conjugate-linear bounded  functional  
	$F:\HS\to\C$  by
	$$F[v]=\int_{\Omega}{Q}(x)f(x)\overline{v(x)}\d x.$$
	Again, by virtue of the Riesz representation theorem, there is a unique 
	$u\in \HS$ such that
	\begin{gather}\label{variat}
		(u,v)_{\HS}=F[v],\quad \forall v\in \HS.	
	\end{gather}
	We define the operator $\Res:\HS\to\HS$ by $\Res f\ceq u$; this operator is linear, bounded, non-negative and self-adjoint. 
	The function $u=\Res f$ is a weak solution to the problem
	\begin{gather*}
		-\Delta u +{Q} u={Q} f\text{ on }\Omega,\qquad
		u=0\text{ on }\partial\Omega.
	\end{gather*}
	The number $\mu\not=0 $ is an eigenvalue of $\Res$ with the corresponding eigenfunction $u$ iff
	$\lambda=\mu^{-1}-1$ and $u$ are an eigenvalue and an eigenfunction of the 
	  weighted spectral problem
	$$-\Delta u=\lambda Q u\text{ in }\Omega,\quad u=0\text{ on }\partial\Omega.$$

	Since the estimates we will obtain have different forms for $n\ge 3$ and $n=2$, it is convenient to introduce also the   notation
	$$\zeta\ie\ceq\begin{cases}1,&n\ge3,\\|\ln d\ie|,&n=2.\end{cases}$$

	We denote by $\J\e:\HS\to \HS\e$  the operator of restriction to
	$\Omega\e$, i.e. $\J\e f = f\restr_{\Omega\e}$.

	Finally, by $C,C_1,C_2,\dots$ we denote generic positive constants
	being independent of $\eps>0$ and $i\in\I\e$ (but they may depend on $\Omega$ and the constants $\wt C$,   $C_{\rm inn}$, $C_{\rm tr}$,   $C_{\rm P}$,
	$C_{\rm sec}$, $C_Q^\pm$).
	\smallskip

	We are now in position to formulate the first result. Recall that   $\kappa\e$ is given in \eqref{assump:main}.
	
	\begin{theorem}\label{th1}
		Under the  assumptions imposed above the following estimates hold:
		\begin{gather}\label{th1:norms}
			C_1\|f\| _{H^1(\Omega\e)}\le \|f\| _{\HS\e}\le
			C_2\|f\| _{H^1(\Omega\e)},\ \forall f\in\HS\e,
		\end{gather}
		and
		\begin{gather}\label{th1:est}
			\|\Res\e \J\e f  - \J\e\Res f \|_{\H\e}\leq C
			{\,\max\left\{\kappa\e,\,  (\sup_{i\in\I\e}d\ie\zeta\ie)^{1/2}\right\}}\|f\|_{\HS},\quad \forall f\in \HS.
		\end{gather} 
	\end{theorem}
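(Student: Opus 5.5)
The plan is to prove both statements cell by cell. We reduce everything to uniform estimates on the rescaled geometry and then sum over $i\in\I\e$.

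\emph{Step 1: the norm equivalence \eqref{th1:norms}.} For the upper bound we need $\sum_i\|u\|^2_{L^2(\partial D\ie)}\le C\|u\|^2_{H^1(\Omega\e)}$. On the annular region $\B(2d\ie,x\ie)\setminus\overline{D\ie}$, which lies in $\Gamma\ie$ by \eqref{assump:D1}--\eqref{assump:D1+}, we rescale to $\mathbf{G}\ie$. Then \eqref{assump:shape4} extends $u$ harmonically into $\D\ie$, and \eqref{assump:shape2} controls the trace. This gives
\[
\|u\|^2_{L^2(\partial D\ie)}\le C\bigl(d\ie\|\nabla u\|^2_{L^2(\B(2d\ie,x\ie)\setminus D\ie)}+d\ie^{-1}\|u\|^2_{L^2(\B(2d\ie,x\ie)\setminus D\ie)}\bigr).
\]
The factor $d\ie^{-1}$ is too large, so I would replace $\|u\|_{L^2}$ on the small annulus by the mean over the cell $\Gamma\ie\setminus D\ie$. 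The mean is handled as follows. First, a Poincaré inequality on the convex cell (with constant $\sim r\ie^2$) combined with a capacity-type estimate passes from the annulus mean to the cell mean, which costs a factor $\zeta\ie$ in $n=2$. Second, the contribution of the mean itself is $\area(\partial D\ie)|\langle u\rangle|^2\le Q\ie\|u\|^2_{L^2(\Gamma\ie)}$. Since $Q\ie\le C$ on average by \eqref{assump:Q2}--\eqref{assump:main} and \eqref{Sobolev:nu}, this yields the upper bound. For the lower bound we use \eqref{assump:Q1}: $\|u\|^2_{L^2(\Gamma\ie)}$ is controlled by $r\ie^2\|\nabla u\|^2$ plus $\vol(\Gamma\ie)|\langle u\rangle_{\partial D\ie}|^2\le (C_Q^-)^{-1}\|u\|^2_{L^2(\partial D\ie)}$. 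The uniform Neumann gap \eqref{assump:shape3} handles the comparison of means near the hole. Cells cover $\Omega$ up to the thin layer $T\e$, and there we use the Dirichlet condition on $\partial\Omega$ via \eqref{assump:Te}.

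\emph{Step 2: the resolvent estimate.} Set $u=\Res f$, $u\e=\Res\e\J\e f$ and $w=u\e-\J\e u$. By \eqref{th1:norms}, $\|w\|_{\HS\e}^2=(u\e,w)_{\HS\e}-(\J\e u,w)_{\HS\e}$. Using \eqref{variat:eps}, then \eqref{variat} tested with an extension $\wt w\in\HS$ of $w$ (via $\mathbf{P}\ie$ rescaled, so $\|\wt w\|_{H^1(\Omega)}\le C\|w\|_{\HS\e}$), we obtain
\[
\|w\|^2_{\HS\e}=\sum_i\int_{\partial D\ie}(f-u)\overline{w}\,\d s-\int_\Omega Q(f-u)\overline{\wt w}\,\d x+\int_{\cup D\ie}\nabla u\cdot\nabla\overline{\wt w}\,\d x .
\]
The last term is $\le\|\nabla u\|_{L^2(\cup D\ie)}\|w\|$. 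It is small because $u\in H^2$, by elliptic regularity using $Qf,Qu\in L^2$ from \eqref{Q:basic}; Hölder on the small sets then gives a factor $\le C\sup d\ie^{1/2}$. The main term is a discrepancy functional $g\mapsto\sum_i\int_{\partial D\ie}g\,\d s-\int_\Omega Qg\,\d x$ applied to $g=(f-u)\overline{\wt w}\in W^{1,1}$. Splitting $Q=Q\e+(Q-Q\e)$, the $Q-Q\e$ part is bounded cube by cube through Hölder with exponents $\mu,\nu,\nu$ and \eqref{Sobolev:nu}, giving $C\kappa\e\|f\|\|w\|$.

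\emph{Step 3: the main obstacle.} The remaining difficulty is the per-cell estimate
\[
\Bigl|\int_{\partial D\ie}g\,\d s-Q\ie\int_{\Gamma\ie}g\,\d x\Bigr|\le C(d\ie\zeta\ie)^{1/2}\cdot(\text{$H^1$-type norms of the factors on }\Gamma\ie).
\]
To get it, I would compare both terms with $\area(\partial D\ie)\langle g\rangle_{\Gamma\ie}$. The volume term differs from this by the Poincaré error $r\ie\|\nabla g\|$. For the surface term, I would solve an auxiliary Neumann problem on $\Gamma\ie\setminus D\ie$ with flux $1$ on $\partial D\ie$ and the compensating constant $-Q\ie$ in the bulk. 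Its $H^1$ energy is of order $\area(\partial D\ie)\,d\ie\zeta\ie$, by comparison with the radial fundamental solution, $\log$ in $n=2$; the shape assumptions \eqref{assump:shape1}--\eqref{assump:shape3} make this uniform. Since $g$ is a product, I would apply this to $f-u\in H^2$-ish and $\wt w\in H^1$ via Cauchy--Schwarz in the energy, and use $Q\ie$ bounded on average. Summing over cells with Cauchy--Schwarz yields $(\sup d\ie\zeta\ie)^{1/2}\|f\|_\HS\|w\|_{\HS\e}$. The boundary layer $T\e$ contributes $O(\rho\e^{1/2})$, which is absorbed analogously using $u\in H^2$ and the Dirichlet condition. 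Dividing by $\|w\|$ gives \eqref{th1:est}.
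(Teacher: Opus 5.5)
Your architecture matches the paper's. It uses the harmonic extension $\P\e$, cellwise trace/Poincar\'e/capacity estimates, and the identity $\|w\|^2_{\HS\e}=\sum_i(f-u,w)_{L^2(\partial D\ie)}-\int_\Omega Q(f-u)\overline{\wt w}\d x+\sum_i(\nabla u,\nabla\wt w)_{L^2(D\ie)}$, which is $I_{1,\eps}+I_{2,\eps}+I_{3,\eps}$ with $v=w$. The cube-by-cube H\"older bound then produces $\kappa\e$. Your auxiliary Neumann potential $\phi$ is a legitimate dual form of the mean-comparison estimate of Lemma~\ref{lemma3}, though proving its energy bound uniformly near an arbitrary $\partial D\ie$ is the same work.

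\textbf{Main gap: the product.} The genuine gap is how you apply $\phi$ to $g=(f-u)\overline{\wt w}$. The factor $f$ is only in $H^1_0(\Omega)$, so $f-u$ is not ``$H^2$-ish''. Moreover, the energy bound $\|\nabla\phi\|^2_{L^2}\lesssim\area(\partial D\ie)\,d\ie\zeta\ie$ cannot control $\int\nabla\phi\cdot\nabla(a\bar b)=\int\bar b\,\nabla\phi\cdot\nabla a+\int a\,\nabla\phi\cdot\nabla\bar b$ by $H^1$-norms of $a$ and $b$; that would require $L^\infty$ bounds on a factor or pointwise bounds on $\nabla\phi$. The missing idea is to feed only a constant into the potential. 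Note that $Q\ie\int_{\Gamma\ie}a\bar b\d x=\area(\partial D\ie)\la a\bar b\ra_{\Gamma\ie}$ exactly, so the volume term carries no Poincar\'e error. Writing $a=\la a\ra_{\Gamma\ie}+a'$ gives
\[
\int_{\partial D\ie}a\bar b\d s-Q\ie\int_{\Gamma\ie}a\bar b\d x=(a',b)_{L^2(\partial D\ie)}+\la a\ra_{\Gamma\ie}\area(\partial D\ie)\,\overline{\big(\la b\ra_{\partial D\ie}-\la b\ra_{\Gamma\ie}\big)}-Q\ie(a',b)_{L^2(\Gamma\ie)}.
\]
The three terms are handled as follows.
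The middle term is the only one that needs $\phi$ (equivalently Lemma~\ref{lemma3}), paired with $\area(\partial D\ie)|\la a\ra_{\Gamma\ie}|^2\le Q\ie\|a\|^2_{L^2(\Gamma\ie)}$.
The first term needs Lemma~\ref{lemma1} plus Poincar\'e on the convex cell, giving $\|a'\|^2_{L^2(\partial D\ie)}\lesssim d\ie\zeta\ie\|\nabla a\|^2_{L^2(\Gamma\ie)}$.
The last term needs $Q\ie r\ie^{2}\lesssim d\ie^{n-1}r\ie^{2-n}\le d\ie$.
This is the paper's treatment of $I_{12,\eps}$, carried out separately for $a=f$ and $a=u$, with $b=\P\e w$.

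\textbf{Second gap: the hole term.} H\"older does not give the claimed rate for $\sum_i(\nabla u,\nabla\wt w)_{L^2(D\ie)}$. The holes fill a fraction $\lesssim d\ie^{n}r\ie^{-n}\lesssim d\ie Q\ie$ of $\Gamma\ie$. H\"older against $\nabla u\in L^\nu$ converts this only into the power $1-2/\nu$, so for $n\ge3$ you get $\|\nabla u\|_{L^2(\cup_i D\ie)}\lesssim(\sup_i d\ie)^{1/n}\|u\|_{H^2(\Omega)}$, which is worse than $(\sup_i d\ie)^{1/2}$. You need the local bound of Lemma~\ref{lemma1+}, $\|g\|^2_{L^2(D\ie)}\lesssim d\ie^{n}r\ie^{-n}\|g\|^2_{L^2(\Gamma\ie)}+d\ie^2\zeta\ie\|\nabla g\|^2_{L^2(\Gamma\ie)}$, applied to $g=\partial_j u$. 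Combine it with $d\ie^{n}r\ie^{-n}\lesssim d\ie Q\ie$ and the weighted sum $\sum_i Q\ie\|g\|^2_{L^2(\Gamma\ie)}\lesssim\|g\|^2_{H^1(\Omega)}$ of Lemma~\ref{lm:4:2}.

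\textbf{Smaller points.} The layer $T\e$ contributes nothing separately. Since $Q\e=0$ there, it already sits inside your $\kappa\e$ term, and an extra $O(\rho\e^{1/2})$ would give a bound not of the stated form. In the lower norm bound, $\|u\|^2_{L^2(\Gamma\ie)}\lesssim r\ie^2\|\nabla u\|^2+\vol(\Gamma\ie)|\la u\ra_{\partial D\ie}|^2$ is false as written; capacitary potentials of $D\ie$ violate it. The correct bound carries an extra term $r\ie^{n}d\ie^{2-n}\zeta\ie\|\nabla u\|^2_{L^2(\Gamma\ie)}$. This is harmless only because $r\ie^{n}d\ie^{2-n}\lesssim d\ie/Q\ie$ and \eqref{assump:Q1} holds.
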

	
	\begin{remark}
	At first glance, it appears that $\rho_\varepsilon$ from \eqref{assump:Te}
	does not influence the estimate \eqref{th1}, which seems suspicious.
	However, this is not the case, since the speed of convergence of $\rho_\varepsilon$
	to zero affects the speed of convergence of $\kappa_\varepsilon$ in \eqref{assump:main}. Indeed, we have
	$$\|Q\e-Q\|^\mu_{L^{\mu}(\square_k)}=
	\|Q\e-Q\|^\mu_{L^{\mu}(\Omega\cap\square_k)}=\|Q\|^\mu_{L^\mu(T\e\cap \square_k)}+\sum_{i\in\I\e}\|Q\e-Q\|^\mu_{L^\mu(\Gamma\ie\cap \square_k)},$$
	and the rate of convergence of the first term $\|Q\|^\mu_{L^\mu(T\e\cap \square_k)}$ is influenced by the ``thickness'' of the set $T_\varepsilon$, which is controlled by  $\rho_\varepsilon$.
\end{remark}

\begin{remark}
	The rate of convergence $\kappa\e$ in \eqref{assump:main} can be arbitrary.
	To demonstrate this, we assume that   $\Omega$ is bounded; for  \emph{constant}  function $Q(x)\equiv 1$, we construct the   holes 
	$D\ie$ as in Remark~\ref{rem:Q} with the following modification:  the expression \eqref{die} for $d\ie$
	we replace by 
	$$
	d\ie = \left(z\e\frac{\wt Q\ie \vol(\Gamma\ie)}{\area(\partial\B(1,0))} \right)^{1/(n-1)}\overset{\eqref{wtQ:def}}=\left(z\e\frac{  \vol(\Gamma\ie)}{\area(\partial\B(1,0))} \right)^{1/(n-1)},
	$$
	with some  $z\e\to 1$ as $\eps\to 0$.
The presence of  $z_\varepsilon$ does not affect the validity of the conditions required for Theorem~\ref{th1}.
	In particular, we have $Q\e(x)=z\e \sum_{i\in\I\e}\mathbb{1}_{\Gamma\ie}(x)$, whence
	$$\|Q\e-Q\|_{L^{\mu}(\Omega)}=
	 \Big(\vol(T\e) +
	|z\e-1|^\mu\sum_{i\in\I\e}\vol(\Gamma\ie)\Big)^{1/\mu} .$$
Furthermore, if $\Omega$ is a convex polytope, it is easy to construct a family of pairwise disjoint convex polytopes $\Gamma\ie$ satisfying \eqref{Gamma:cond:2}, \eqref{assump:re}, and such that $\cup_{i\in\I\e }\overline{\Gamma\ie}=\overline{\Omega}$; in this case we   have 
$$\|Q\e-Q\|_{L^{\mu}(\Omega)}=|z\e-1|(\vol(\Omega))^{1/\mu}.$$
Since the choice of $z_\varepsilon$ is entirely at our disposal,
the factor $|z_\varepsilon-1|$ can converge to zero at an arbitrary rate.  
\end{remark}
	
	Our next goal is to estimate the distance between the spectra of  $\Res\e$ and $\Res$ in the Hausdorff metric.
	Recall that
	for closed sets $X,Y\subset\R$  the \emph{Hausdorff distance} between  them is given by
	\begin{gather*}
		\dist_H (X,Y)\ceq\max\bigg\{\sup_{x\in X} \inf_{y\in Y}|x-y|;\,\sup_{y\in Y} \inf_{x\in X}|y-x|\bigg\}.
	\end{gather*}
	One can easily show  (see, e.g., \cite[Proposition~A.1.6]{P12}) that $\dist_H(X\e,X)\to 0$ if and only if   the following two conditions hold simultaneously:
	\begin{itemize}
		\item[(i)] $\forall x\in X$ there exists a family $(x\e)_{\eps>0}$ with $x\e\in X\e$ such that $x\e\to x$ as $\eps\to 0 $,\smallskip
		
		\item[(ii)] $\forall x\in\R\setminus X$ there exist $\delta>0$ such that $X\e\cap (x-\delta,x+\delta)=\varnothing$ for  
		small enough  $\eps$.
		
	\end{itemize}

	\begin{theorem}\label{th2}
		One has the estimate
		\begin{gather} \label{th2:est}
			\dist_H\left(\sigma(\Res\e),\sigma(\Res)\right)\leq 
			C		{\,\max\left\{\kappa\e,\,  (\sup_{i\in\I\e}d\ie\zeta\ie)^{1/2}\right\}}.
		\end{gather}
		
	\end{theorem}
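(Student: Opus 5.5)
Writing $\delta\e\ceq\max\{\kappa\e,(\sup_{i\in\I\e}d\ie\zeta\ie)^{1/2}\}$, the plan is to derive Theorem~\ref{th2} from Theorem~\ref{th1} through an abstract comparison lemma for self-adjoint operators on different Hilbert spaces, in the spirit of Post's quasi-unitary equivalence. The two one-sided parts of the Hausdorff distance are treated separately, each time through the spectral theorem: for a self-adjoint bounded $A$ and $\mu\in\R$ one has $\dist(\mu,\sigma(A))\le\|(A-\mu)g\|/\|g\|$ for every $g\ne0$. The work is to build good approximate eigenvectors in both directions. Two identification maps are needed, $\J\e:\HS\to\HS\e$ (restriction) and a map $\J\e':\HS\e\to\HS$. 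For $\J\e'$ I take the local harmonic extension into the holes, using the operators $\mathbf P\ie$ rescaled to $\B(2d\ie,x\ie)$; assumptions \eqref{assump:D1}, \eqref{assump:D1+} guarantee $\B(2d\ie,x\ie)\subset\Gamma\ie$. Assumption \eqref{assump:shape4} together with \eqref{th1:norms} makes $\J\e'$ uniformly bounded.

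First direction, $\sigma(\Res)$ near $\sigma(\Res\e)$. Let $\mu\in\sigma(\Res)$ and $\eta>0$, and pick $f\in\HS$ with $\|(\Res-\mu)f\|_\HS\le\eta\|f\|_\HS$. One may replace $f$ by $\Res g$ for a suitable spectral projection vector $g$, so that $f$ lies in the range of $\Res$ and has the $H^2$-type regularity noted in the paper. Then
$$
(\Res\e-\mu)\J\e f=(\Res\e\J\e-\J\e\Res)f+\J\e(\Res-\mu)f ,
$$
and the first term is $O(\delta\e)\|f\|$ by \eqref{th1:est}. The lower bound $\|\J\e f\|_{\HS\e}\ge c\|f\|_\HS-C\delta\e\|f\|_\HS$ must then be shown. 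This amounts to comparing the quadratic forms $\|\nabla f\|^2_{L^2(\Omega\e)}+\sum_i\|f\|^2_{L^2(\partial D\ie)}$ and $\|f\|_\HS^2$. The needed lemma is $\bigl|\sum_i\|f\|^2_{L^2(\partial D\ie)}-\int Q|f|^2\bigr|\le C\delta\e\|f\|^2_{H^1}$, which presumably was already established in Section~\ref{sec3} for the proof of Theorem~\ref{th1}. The gradient part is handled by the loss $\|\nabla f\|_{L^2(\cup D\ie)}\to0$: H\"older together with $\vol(\cup_i D\ie\cap\square_k)\to0$ gives this for $f$ in the range of $\Res$, where $f\in H^2$ and hence $\nabla f\in L^\nu$. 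Letting $\eta\to0$ yields $\dist(\mu,\sigma(\Res\e))\le C\delta\e$.

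Second direction, $\sigma(\Res\e)$ near $\sigma(\Res)$. This is the main obstacle, since Theorem~\ref{th1} only controls $\Res\e\J\e-\J\e\Res$ and not the adjoint-type difference $\J\e'\Res\e-\Res\J\e'$. I would take adjoints: $\Res\e$ and $\Res$ are self-adjoint in $\HS\e$ and $\HS$, so $(\Res\e\J\e-\J\e\Res)^*=\J\e^*\Res\e-\Res\J\e^*$. This has the same norm bound, where $\J\e^*:\HS\e\to\HS$ is the Hilbert adjoint of the restriction with respect to the weighted products. Given $u\e$ with $\|(\Res\e-\mu)u\e\|\le\eta\|u\e\|$, I would test with $g=\J\e^*u\e$, which gives
$$
\|(\Res-\mu)\J\e^*u\e\|\le C\delta\e\|u\e\|+\|\J\e^*\|\eta\|u\e\| .
$$
It remains to bound $\|\J\e^*u\e\|$ from below by $c\|u\e\|$. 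For this I would show $\|\J\e^*-\J\e'\|\le C\delta\e$ on the range of $\Res\e$ (again using $u\e\approx\Res\e h$ for a suitable $h$), together with $\|\J\e'u\|_\HS\ge c\|u\|_{\HS\e}$. The latter follows from the form comparison applied to the extension $\J\e'u$, using \eqref{assump:shape2}, \eqref{assump:shape3} and the Poincaré-type estimates near each hole. The bound on $\J\e^*-\J\e'$ means estimating $(\J\e f,u)_{\HS\e}-(f,\J\e'u)_\HS$, which is precisely the boundary-versus-volume discrepancy $\sum_i(f,u)_{L^2(\partial D\ie)}-\int Qf\overline{\J\e'u}$ plus gradient terms over the holes. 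I expect this estimate, with the rate $(d\ie\zeta\ie)^{1/2}$ coming from the cell problem near each hole and $\kappa\e$ from $Q\e-Q$, to be the technical core; it reuses the auxiliary estimates behind Theorem~\ref{th1}. Combining both directions gives \eqref{th2:est}.
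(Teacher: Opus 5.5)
The architecture is the paper's: you transport approximate eigenvectors in both directions and let Theorem~\ref{th1} control the intertwining defect. Taking adjoints of \eqref{th1:est} is also legitimate. The gaps are in both lower bounds on the transported vectors.

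\textbf{Direction $\sigma(\Res\e)\to\sigma(\Res)$.} Your route to $\|\J\e^*u\e\|_{\HS}\ge c\|u\e\|_{\HS\e}$ through $\J\e^*\approx\J\e'$ does not close. Your $\J\e'$ is the paper's $\P\e$. The difference $(\J\e f,u)_{\HS\e}-(f,\P\e u)_{\HS}$ contains the term $-\sum_i(\nabla f,\nabla\P\e u)_{L^2(D\ie)}$, with $f$ arbitrary in $\HS$. Choosing $f=\P\e u$ shows that making it $O(\delta\e)$ forces $\|\nabla(\P\e u)\|^2_{L^2(\cup_i D\ie)}\le C\delta\e\|u\|^2_{\HS\e}$. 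That is a smallness estimate on the energy of the \emph{perturbed} solution near the holes, and nothing in Sections~\ref{sec3}--\ref{sec4} provides it. The hole estimates behind Theorem~\ref{th1} (e.g.\ for $I_{3,\eps}$) use only the $H^2$-regularity \eqref{el:reg} of the \emph{limit} solution $\Res f$. You would need a new shape-uniform elliptic estimate for the Robin problem in $\Omega\e$, and it would deteriorate as $\mu\to0$.

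The missing idea is the identity $\J\e\P\e=\Id$ on $\HS\e$. It gives $\|u\|^2_{\HS\e}=(u,\J\e\P\e u)_{\HS\e}=(\J\e^*u,\P\e u)_{\HS}\le C\|\J\e^*u\|_{\HS}\|u\|_{\HS\e}$, using Lemma~\ref{lemma5}, \eqref{norm:equivs} and \eqref{th1:norms}. So $\J\e^*$ is uniformly bounded below on all of $\HS\e$, and your adjoint argument closes at once. It needs no threshold and even avoids the paper's auxiliary hole estimate \eqref{aux:ext}.

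The paper uses the same identity differently. It tests $\Res$ with $\P\e\psi$ and writes $\Res\e\psi=\Res\e\J\e\P\e\psi$, so Theorem~\ref{th1} applies to $f=\P\e\psi$. The remainder $\Res\P\e\psi-\P\e\J\e\Res\P\e\psi$ vanishes on $\Omega\e$ and solves $-\Delta w=Q(f-\Res f)$, $w=0$ on $\partial D\ie$, in each hole. It is bounded by $C\sup_i d\ie$ via $\Lambda_{\rm D}(D\ie)\ge d\ie^{-2}\Lambda_{\rm D}(\B(1,0))$. So this direction is not the main obstacle.

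\textbf{Direction $\sigma(\Res)\to\sigma(\Res\e)$.} Your bound $\|\J\e f\|_{\HS\e}\ge c\|f\|_{\HS}-C\delta\e\|f\|_{\HS}$ cannot hold with constants independent of the spectral point $\mu$. Lying in the range of $\Res$ is not enough: for constant $Q$ and $\phi\in C_c^\infty(D\ie)$ one has $\phi=\Res(\phi-Q^{-1}\Delta\phi)$, yet $\J\e\phi=0$. What you actually control is $\|f\|_{H^2(\Omega)}\le C\|g\|_{\HS}\le C(\mu-\eta)^{-1}\|f\|_{\HS}$. So the gradient lost inside the holes is small only relative to $\mu^{-2}\|f\|^2_{\HS}$.

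You must therefore restrict to $\mu\ge C'd\e^{1/2}$, where $d\e=\sup_i d\ie$. The case $\mu<C'd\e^{1/2}$ is handled by $0\in\sigma(\Res\e)$: every $\psi\in\HS\e$ vanishing on $\cup_i\partial D\ie$ lies in $\ker\Res\e$.

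For this threshold to stay below $\delta\e$, you need the sharp bound $\sum_i\|\nabla f\|^2_{L^2(D\ie)}\le Cd\e\|f\|^2_{H^2(\Omega)}$. It follows from Lemma~\ref{lemma1+} applied to $\partial_j f$, together with Lemmas~\ref{lm:4:1} and \ref{lm:4:2}. Your H\"older--volume argument gives only $d\e^{2/n}\|f\|^2_{H^2}$ for $n\ge3$. That would push the threshold up to $d\e^{1/n}$ and spoil the rate in \eqref{th2:est}. This threshold argument is exactly how the paper proves Lemma~\ref{lemma:Haus2}.

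Your boundary-form comparison is fine. It follows from the bound on $I_{1,\eps}$ with $v=\J\e f$, combined with \eqref{QL2} and $\sum_i\|w\|^2_{L^2(D\ie)}\le Cd\e\|w\|^2_{H^1(\Omega)}$.
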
 
	
	The above theorems will be proven in Sections~\ref{sec3}-\ref{sec4}.
	
	\subsection{Remarks on holes shape assumptions} 
	We conclude this section with a more detailed discussion of assumptions
	imposed on the shapes of 
	the holes.
	Condition \eqref{assump:shape1} is straightforward -- it prevents the use of very thin domains $\D\ie$, whereas conditions \eqref{assump:shape2}--\eqref{assump:shape4} are more subtle (apart from the trivial case, when there is a \emph{finite} set of domains such that every 
	$\D\ie$ is congruent to one of them). 
	\smallskip
	
	$(i)$	The standard proof
	of the trace inequality (see, e.g., \cite[\S 5.5, Theorem 1]{E10}) is based on a local straightening of the boundary of a domain. 
	Tracing this proof, one can easily show that the assumption 
	\eqref{assump:shape2} holds if the principle curvatures of each $\partial D\ie$ are bounded uniformly in $\eps>0$ and $i\in\I\e$, and moreover, 
	there is $\delta>0$ such that the $\delta$-neighborhood 
	of each $\partial \D\ie$ possesses global tubular coordinates.
	\smallskip
	
	$(ii)$ We next proceed to assumption \eqref{assump:shape3}. 
	According to \cite[Corollary 2]{BC07}, a sufficient condition for \eqref{assump:shape3} to hold is that all domains $\mathbf{G}\ie$ are uniformly Lipschitz regular (see \cite[Definition~I.1]{C77}). This condition is satisfied, for instance, if all domains $\mathbf{G}_i$ obey the cone property with a fixed cone; that is, for every boundary point of $\mathbf{G}\ie$ one can associate a homothetic copy of a cone with fixed angle and height that is contained in the domain. For details, see Definitions 2 and 3 in \cite{C75} and the remark following Definition~I.1 in \cite{C77}.
	
	The preceding discussion implies that assumption \eqref{assump:shape3} holds whenever all sets $\D\ie$ are \emph{convex}. Indeed, in this case, the sets $\mathbf{G}\ie$ satisfy the cone property with a fixed cone of arbitrary angle $0<\al<\pi/2$ and sufficiently small height $h(\alpha)$ (here, in addition to convexity, we also use the inclusion $\D\ie \subset \B(1,0)$).
	
	If one wishes to obtain explicit lower bounds for the Neumann eigenvalues $\Lambda_N(\mathbf{G}\ie)$ in terms of the geometry of $\mathbf{G}\ie$, one can use, for example, the estimate \cite[Theorem~1.1]{BCT15}
	\begin{equation*}
		C_n \lambda\e^\sharp K_{n,i,\varepsilon}^2\le\Lambda_{\rm N}(\mathbf{G}\ie).
	\end{equation*}
	Here $C_n>0$ depends only on the dimension $n$, $\lambda\e^\sharp$ is the first Dirichlet eigenvalue of a ball with the same measure as $\mathbf{G}\ie$, and $K_{i,\varepsilon}$ is the best isoperimetric constant relative to $\mathbf{G}\ie$, i.e.
	\begin{equation*}
		K_{i,\varepsilon} = \inf_{E \subset \mathbf{G}\ie} 
		\frac{P{\ie} (E)}{( \min \{ \vol(E), \vol(\mathbf{G}\ie \setminus E)
			\} )^{(n-1)/n} }
	\end{equation*}
	where $P{\ie}(E)$ denotes the perimeter of $E$ relative to $\mathbf{G}\ie$. The inclusion $\mathbf{G}\ie\subset\B(2,0)$
	and the domain monotonicity of Dirichlet eigenvalues imply
	that  $\lambda\e^\sharp $ are bounded from below by the first  eigenvalue $\Lambda_{\rm D}(\B(2,0))$ of the Dirichlet Laplacian on  $\B(2,0)$.
	Thus, if  the isoperimetric constants $K\ie$ are bounded from below by $K>0$, then
	\eqref{assump:shape3} holds with $\Lambda_N=C_n \Lambda_{\rm D}(\B(2,0)) K^2$.
	
	Finally, for $n=2$ we give an example of  $\mathbf{D}\ie$ whose smallest enclosing ball is $\B(1,0)$ and 
	\begin{gather}\label{inf0}
		\inf_{\eps>0}\inf_{i\in\I\e}\Lambda_N(\mathbf{G}\ie)=0,\quad
		\mathbf{G}\ie=
		\B(2,0)\setminus \overline{\D\ie}.
	\end{gather}
	We define the sets $\D\ie$ by
	$$
	{\D\ie} = (\B(1,0)\setminus \overline{\B(1/2,0)})\setminus \mathbf{F}\e,\text{ where } \mathbf{F}\e\ceq \left\{ 
	(r \cos \theta, r \sin \theta)\in\R^2  : \  r\in (1/2,1),\, 
	|\theta| < \beta\e \right\}.
	$$
	Here   $\beta\e \in (0,\pi/2]$ such that
	$\lim_{\eps\to 0} \beta\e = 0.$  
	Evidently, the smallest ball containing $\mathbf{D}\ie$ is indeed $\B(1,0)$.
	To prove \eqref{inf0}, we introduce the test-function $U\e\in H^1(\mathbf{G}\ie)$ by 
	$$
	U\e(x)\ceq 
	\begin{cases}
		1,& x\in \B(1/2,0),\\
		C\e,& x\in \B(2,0)\setminus\overline{\B(1,0)},\\
		\frac{(C\e-1)\ln|x|}{\ln 2}+C\e,& x\in \mathbf{F}\e,
	\end{cases}
	$$
	where the constant $C\e$ is chosen to satisfy 
	\begin{gather}\label{mean0}
		\int_{\mathbf{G}\ie}U\e(x)\d x=0. 
	\end{gather}
	Simple computations
	yield
	$\lim_{\eps\to 0}C\e=
	- {\vol(\B(1/2,0))}/ {\vol(\B(2,0)\setminus {\B(1,0)})},$
	whence we get
	\begin{gather}\label{Ue}
		\lim_{\eps\to 0}\|U\e\|^2_{L^2(\mathbf{G}\ie)}>0,
		\quad
		\lim_{\eps\to 0}\|\nabla U\e\|^2_{L^2(\mathbf{G}\ie)}=0.
	\end{gather}
	Then \eqref{inf0} follows from \eqref{mean0}--\eqref{Ue}
	and the min-max principle.
	Similar examples can be constructed for $n\ge 3$.
	\smallskip

	$(iii)$ To guarantee the fulfillment of assumption \eqref{assump:shape4},
	we assume that there exist constants $\rho,\delta > 0$ (independent on $\eps$ and $i$) such that 
	each set  $\mathbf{G}\ie$ is a so-called  $(\rho,\delta)$-domain in the 
	sense of Jones \cite{Jones}. Without going into the details of this definition, let us note that this property holds when all domains $\mathbf{G}\ie$ are uniformly Lipschitz-regular as in  $(ii)$. 
	Under the above assumption on the sets $\mathbf{G}\ie$,
	Theorem~1.1 from \cite{Jones} ensures the existence of  extension operators $\mathbf{E}\ie: H^1(\mathbf{G}\ie) \to H^1(\mathbb{R}^n)$, whose norms are bounded by a constant $C_E$ independent of $i$ and $\varepsilon$ (see \cite{Jones,Rogers} and the references therein for these and more general results). However, the extensions $\mathbf{E}\ie $ are not, in general, harmonic on $\D\ie$; therefore, they must be modified in order to satisfy the requirements of assumption \eqref{assump:shape4}. 
	
	Given $i, \varepsilon$ and $u \in  H^1(\mathbf{G}\ie)$, we define 
	$f\ie = \Delta (\mathbf{E}_{i,\varepsilon}u)$. Then, $f\ie \in 
	H^{-1}(\mathbb{R}^n)$, and 
	$$
	\Vert f\ie \Vert_{H^{-1}(\mathbb{R}^n)} \leq \Vert \mathbf{E}_{i,\varepsilon}u \Vert_{H^{1}(\mathbb{R}^n)} \leq  C_E \Vert u \Vert_{H^1(\mathbf{G}\ie)}.
	$$ Next, let $w\ie\in H_0^1({\D\ie})$
	be the weak solution to the problem
	$$
	-\Delta w\ie=f\ie\text{ in }\D\ie,\quad w\ie=0\text{ on }\partial\D\ie
	$$
	(here we regard $f\ie$ as an element of $H^{-1}(\D\ie)$ via the canonical embedding $H^{-1}(\R^n)\hookrightarrow H^{-1}(\D\ie)$).
	One has the estimates
	\begin{gather*}
		\Lambda_{\rm D}(\D\ie)\|  w\ie\|^2_{L^2(\D\ie)}\leq \|\nabla w\ie\|^2_{L^2(\D\ie)}\leq  \|w\ie\|_{H^{1}(\D\ie)}\|f\ie\|_{H^{-1}(\D\ie)},
	\end{gather*}
	where $\Lambda_{\rm D}(\D\ie)$ is the lowest  eigenvalue of the Dirichlet Laplacian on $\D\ie$. The domain monotonicity of  Dirichlet eigenvalues 
	implies that $\Lambda_{\rm D}(\D\ie)$ are bounded from below by the 
	lowest  eigenvalue $\Lambda_{\rm D}(\B(1,0))$ of the Dirichlet Laplacian on $\B(1,0)$. Thus, we obtain
	\begin{gather}\label{wie:est}
		\|w\ie\|_{H^1(\D\ie)}\leq C_E\big(1+(\Lambda_{\rm D}(\B(1,0)))^{-1}\big)\|u\|_{H^1(\mathbf{G}\ie)}.
	\end{gather}
	Finally, we define the function   $U\ie\in H^1(\B(2,0))$
	by
	\begin{equation*}
		U\ie(x)= \left\{
		\begin{array}{ll}
			u(x), & x \in \mathbf{G}\ie, \\
			\mathbf{E}_{i,\varepsilon} u (x) + w\ie(x) , & x \in {\D\ie}.
		\end{array}
		\right.
	\end{equation*}
	By construction, $\Delta U\ie=0$ in $ {\D\ie}$, whence $U\ie =\mathbf{P}\ie u$.
	The fulfilment of \eqref{assump:shape4} then follows from 
	\eqref{wie:est} together with the bound $\|\mathbf{E}\ie\|\leq C_E$.

	\section{Auxiliary estimates\label{sec3}}
	
	In this section we collect several estimates which will be used in the proofs of the main theorems. 
	At first, we introduce several new notations:
	\begin{itemize} \setlength\itemsep{0.5em}
		
		\item $B\ie\ceq \B(d\ie,x\ie),$
		
		\item $F\ie \ceq \B(2d\ie,x\ie)$,
		
		\item $R\ie\ceq \B(C_{\rm sec}r\ie,x\ie),$ cf.~\eqref{assump:D1}.

	\end{itemize}
	Note that in view of \eqref{assump:D1+}, \eqref{assump:D1}, we get
	$$D\ie\subset B\ie\subset F\ie\subset R\ie\subset\Gamma\ie.$$

	We denote by  $\la u\ra_{D}$   the mean value of a function $u$ in a domain $D$, i.e.  
	$$
	\la u\ra_{D}\ceq  \frac1{\vol(D)}\int_{D}u\d x.
	$$ 
	The same notation is used for the mean value of $u$ in an 
	$(n-1)$-dimensional manifold $S$:
	$$
	\la u\ra_{S}\ceq  \frac1{\area(S)}\int_{S}u\d s.
	$$ 
	
	Recall that the {symbol} $\Lambda_{\rm N}(D)$ stands for the smallest non-zero eigenvalue of the Neumann Laplacian on $D\subset\R^n$.
	Also, by $\Lambda_{R}^\al(D)$ we denote the smallest eigenvalue of the Laplacian on  $D$ subject to the Robin boundary conditions $\frac{\partial u}{\partial\nu}+\al u=0,$
	where ${\partial\over\partial \nu}$ stands for the derivative along the
	exterior unit normal. The min-max principle yields the estimate
	\begin{gather} 
		\label{Neumann:est:general}
		\|u-\la u \ra_{D}\|^2_{L^2(D)}\leq 
		(\Lambda_{\rm N}(D))^{-1}
		\|\nabla u\|^2_{L^2(D)},\ \forall u\in H^1(D),	 
		\\
		\label{Robin:est:general}
		\|u\|^2_{L^2(D)}\leq 
		(\Lambda_{\rm R}^1(D))^{-1}\left(\|u\|^2_{L^2(\partial D)}+
		\|\nabla u\|^2_{L^2(D)}\right),\ \forall u\in H^1(D).	 
	\end{gather}

	In the following, we assume that $\eps$ is sufficiently small\footnote{Recall that the small parameter $\eps$ takes discrete values $\eps=m^{-1}$, $m\in\N$, thus it is enough to establish the estimates 
		\eqref{th1:norms}, \eqref{th1:est}, \eqref{th2:est} only for small $\eps$.}	
	in order to have 
	\begin{align}
		\label{eps0:1b}
		  C_{\rm sec}r\ie\leq 1/2,\ \forall i\in\I\e
	\end{align}
	(the inequalities \eqref{eps0:1b} hold for sufficiently small $\eps$ due to  \eqref{assump:re}).

	The first estimate was established in \cite{MK06}.

	\begin{lemma}[\!{\cite[Lemma~4.9 \& Remark~4.2]{MK06}}]
		\label{lemma:MK06}
		Let $D\subset\R^n$ be a convex domain, and let
		$D_1,D_2$ be other two domains contained in $D$, moreover, $\vol(D_2)\not= 0$.  Then
		\begin{equation*}\forall u\in  H^1(D):\quad
			\|u\|^2_{L^2(D_1)}
			\leq \frac {2\vol(D_1)}{\vol(D_2)}\|u\|^2_{L^2(D_2)} +
			C \frac {(\mathrm{diam}(D))^{n+1}(\vol(D_1))^{1/n}}{\vol(D_2)}\|\nabla u\|^2_{L^2(D)},
		\end{equation*}
		where the constant $C>0$ depends only on the dimension $n$.
	\end{lemma}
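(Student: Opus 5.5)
This is a standard lemma quoted from \cite{MK06}, and I would prove it by the classical Poincaré-type argument on a convex set: compare values of $u$ at points of $D_1$ with values at points of $D_2$ along segments, which lie in $D$ by convexity. By density of smooth functions in $H^1(D)$ (on a convex, hence Lipschitz, domain) it suffices to treat $u\in C^1(\overline D)$.

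\emph{Step 1: pointwise comparison.} For $x\in D_1$, $y\in D_2$, write
\[
u(x)=u(y)+\int_0^1\nabla u(x+t(y-x))\cdot(x-y)\,\d t .
\]
Hence $|u(x)|^2\le 2|u(y)|^2+2|x-y|^2\bigl(\int_0^1|\nabla u(x+t(y-x))|\d t\bigr)^2$. Average this in $y$ over $D_2$ and integrate in $x$ over $D_1$. The first term gives exactly $\frac{2\vol(D_1)}{\vol(D_2)}\|u\|^2_{L^2(D_2)}$, which is the first term of the claim.

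\emph{Step 2: the gradient term.} We must bound
\[
I:=\frac{2}{\vol(D_2)}\int_{D_1}\int_{D_2}|x-y|^2\Bigl(\int_0^1|\nabla u(x+t(y-x))|\d t\Bigr)^2\d y\,\d x .
\]
Fix $x$ and use polar coordinates centred at $x$: $y=x+r\omega$ with $r\le\rho(\omega)\le\diam D$. The segment integral becomes $\frac1r\int_0^r|\nabla u(x+s\omega)|\d s$. Extend $|\nabla u|$ by zero outside $D$, and apply Cauchy--Schwarz in the form
\[
\Bigl(\int_0^r g\,\d s\Bigr)^2\le\Bigl(\int_0^r s^{1-n}\d s\Bigr)\int_0^r g^2 s^{n-1}\d s .
\]
This step is the one that must be done carefully. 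A more convenient route, following the standard proof of the Sobolev--Poincaré inequality (Gilbarg--Trudinger, Lemma~7.16), first uses $|x-y|\le\diam D$ to bound
\[
I\le\frac{2(\diam D)^2}{\vol(D_2)}\int_{D_1}\int_{D_2}\Bigl(\int_0^1|\nabla u(x+t(y-x))|\d t\Bigr)^2\d y\,\d x .
\]
In the inner integral, Jensen's inequality in $t$ and the change of variables $z=x+t(y-x)$ lead to the Riesz potential bound
\[
\int_{D_2}\Bigl(\int_0^1|\nabla u|\d t\Bigr)\d y\le\frac{(\diam D)^n}{n}\int_D\frac{|\nabla u(z)|}{|x-z|^{n-1}}\d z .
\]
For the square one uses instead
\[
\Bigl(\int_0^1|\nabla u|\d t\Bigr)^2\le|x-y|\int_0^{|x-y|}|\nabla u|^2\,\d s\cdot\frac{1}{|x-y|^{2}}\cdot|x-y| ,
\]
which, after the same polar change of variables, yields
\[
\int_{D_2}(\cdots)\,\d y\le C(\diam D)^{n-1}\int_D\frac{|\nabla u(z)|^2}{|x-z|^{n-1}}\d z .
\]

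\emph{Step 3: integration over $D_1$.} Integrating in $x\in D_1$ and using Fubini, we need $\sup_z\int_{D_1}|x-z|^{1-n}\d x$. By rearrangement this is maximised when $D_1$ is a ball centred at $z$ of the same volume, giving $\int_{D_1}|x-z|^{1-n}\d x\le C_n(\vol(D_1))^{1/n}$. Collecting powers gives
\[
I\le C\,\frac{(\diam D)^2(\diam D)^{n-1}(\vol D_1)^{1/n}}{\vol(D_2)}\|\nabla u\|^2_{L^2(D)}=C\frac{(\diam D)^{n+1}(\vol D_1)^{1/n}}{\vol(D_2)}\|\nabla u\|^2_{L^2(D)} ,
\]
which is the claimed bound.

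The main obstacle is Step 2: producing exactly the exponent $n+1$ on $\diam D$ together with the factor $(\vol D_1)^{1/n}$, rather than $\diam D$ appearing in place of $(\vol D_1)^{1/n}$. This hinges on keeping the singular kernel $|x-z|^{1-n}$ with the $x$-integration over $D_1$ performed last, and applying the rearrangement bound only there.
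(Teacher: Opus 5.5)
Your strategy is the right one. It is the classical Marchenko--Khruslov argument that the paper quotes without proof: compare $u(x)$ with $u(y)$ along segments lying in the convex set $D$, average over $y\in D_2$, pass to polar coordinates about $x$ to produce the kernel $|x-z|^{1-n}$, and finish with Fubini and the rearrangement bound $\sup_z\int_{D_1}|x-z|^{1-n}\d x\le C_n(\vol D_1)^{1/n}$. Steps~1 and~3 are correct.

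Step~2, however, is wrong as written, and it is exactly the step that fixes the exponent $n+1$. There are three problems:
\begin{itemize}
\item The weighted Cauchy--Schwarz inequality you start with is vacuous, not merely delicate: $\int_0^r s^{1-n}\d s=+\infty$ for every $n\ge 2$, because the segment starts at the pole $x$. In Lemma~\ref{lemma1} this device works only because the radial integration starts at $d\ie>0$.
\item In your displayed bound for the square, the factors on the right multiply to $\int_0^{|x-y|}|\nabla u(x+s\omega)|^2\d s$. This is off by a factor $|x-y|$: for $|\nabla u|\equiv 1$ the left side is $1$ and the right side is $|x-y|$, so it fails for short segments. Taken literally, the polar change of variables then gives $(\diam D)^n/n$ rather than $(\diam D)^{n-1}$, and the final exponent would be $n+2$.
\item The unsquared Riesz-potential bound has the same kind of slip. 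For $\int_0^1|\nabla u|\d t$ without a factor $|x-y|$, the correct constant is $(\diam D)^{n-1}/(n-1)$. That bound is not needed anyway.
\end{itemize}

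The repair is short. Set $\rho=|x-y|$, $\omega=(y-x)/\rho$, and let $V=|\nabla u|^2$ be extended by zero outside $D$. The correct inequality is
\[
\Bigl(\int_0^1|\nabla u(x+t(y-x))|\d t\Bigr)^2=\frac{1}{\rho^2}\Bigl(\int_0^{\rho}|\nabla u(x+s\omega)|\d s\Bigr)^2\le\frac{1}{\rho}\int_0^{\rho}V(x+s\omega)\d s .
\]
This does give your claimed bound $\frac{(\diam D)^{n-1}}{n-1}\int_D V(z)|x-z|^{1-n}\d z$ for the $y$-integral. It is simpler still not to discard $|x-y|^2$ at all, as in the original proof:
\[
|x-y|^2\Bigl(\int_0^1|\nabla u(x+t(y-x))|\d t\Bigr)^2\le\rho\int_0^{\rho}V(x+s\omega)\d s .
\]
Then
\[
\int_{D_2}(\cdots)\d y\le\int_{S^{n-1}}\int_0^{\diam D}\rho^{n}\d\rho\int_0^{\diam D}V(x+s\omega)\d s\d\omega=\frac{(\diam D)^{n+1}}{n+1}\int_D\frac{|\nabla u(z)|^2}{|x-z|^{n-1}}\d z ,
\]
and your Step~3 yields the lemma with a constant depending only on $n$.
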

	
	In all lemmas below, we do not assume that the assumptions \eqref{assump:Q1}, \eqref{assump:main} 
	are satisfied; we only claim the fulfillment of 
	\eqref{Gamma:cond:2}, \eqref{assump:D1}, \eqref{assump:D1+}, and  the 
	assumptions \eqref{assump:shape1}--\eqref{assump:shape4} on the shapes of $\D\ie$.
	We believe that these lemmas are of independent interest and may be useful in future research.

	\begin{lemma}\label{lemma1}
		One has the estimate
		\begin{gather}  \label{lemma1:est}
			\|u\|^2_{L^2(\partial D\ie)}\leq 
			C\left(d\ie^{n-1}r\ie^{-n}\|u\|^2_{L^2(R\ie\setminus\overline{B\ie})}+d\ie\zeta\ie\|\nabla u\|^2_{L^2(R\ie)}\right),\ \forall u\in H^1(R\ie).
		\end{gather}  
	\end{lemma}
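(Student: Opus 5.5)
We need to bound the trace on $\partial D\ie$ by the $L^2$ norm in the annulus $R\ie\setminus\overline{B\ie}$ and the gradient in $R\ie$. The plan is to pass through three scales: $\partial D\ie$ to the ball $F\ie$ (scale $d\ie$, by rescaling), the ball $F\ie$ to the spherical shell $\partial B(\rho,x\ie)$ (a radial argument producing the factor $d\ie\zeta\ie$), and shells to the annulus (averaging over radii, producing $r\ie^{-n}$).

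\emph{Step 1 (scale $d\ie$).} We rescale $x=x\ie+d\ie y$, which maps $D\ie$ to $\D\ie$ and $F\ie$ to $\B(2,0)$. For $u\in H^1(F\ie)$, set $U(y)=u(x\ie+d\ie y)$ and apply \eqref{assump:shape2} to $\mathbf{P}\ie(U\restr_{\mathbf{G}\ie})$ restricted to $\D\ie$. Its trace on $\partial\D\ie$ equals that of $U$, and by \eqref{assump:shape4} its $H^1(\D\ie)$-norm is at most $C_{\rm P}\|U\|_{H^1(\mathbf{G}\ie)}$. Hence
\[
\|U\|^2_{L^2(\partial\D\ie)}\le C\big(\|U\|^2_{L^2(\B(2,0))}+\|\nabla U\|^2_{L^2(\B(2,0))}\big).
\]
(One could also apply the trace bound directly to $U\restr_{\D\ie}$; the extension is harmless either way.) Scaling back gives
\[
\|u\|^2_{L^2(\partial D\ie)}\le C\big(d\ie^{-1}\|u\|^2_{L^2(F\ie)}+d\ie\|\nabla u\|^2_{L^2(F\ie)}\big).
\]
The gradient term is already acceptable. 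It remains to bound $d\ie^{-1}\|u\|^2_{L^2(F\ie)}$.

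\emph{Step 2 (radial estimate).} Write $S_\rho=\partial\B(\rho,x\ie)$ for $\rho\in[d\ie,C_{\rm sec}r\ie]$; note $2d\ie\le C_{\rm sec}r\ie$ by \eqref{assump:D1+}. Integrating $\partial_r u$ along rays gives, for $s\le\rho$,
\[
\int_{S_1}|u(x\ie+s\omega)|^2\,d\omega\le 2\int_{S_1}|u(x\ie+\rho\omega)|^2\,d\omega+2\Big(\int_s^\rho t^{1-n}\,dt\Big)\int_{\B(\rho,x\ie)\setminus\B(s,x\ie)}|\nabla u|^2\,dx .
\]
We integrate over $s\in(0,2d\ie)$ against $s^{n-1}ds$. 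The weight $\int_s^\rho t^{1-n}dt$ is at most $Cs^{2-n}$ for $n\ge3$ and $C\ln(\rho/s)$ for $n=2$. After multiplying by $s^{n-1}$ and integrating, it contributes $Cd\ie^2$ for $n\ge3$ and $Cd\ie^2|\ln d\ie|$ for $n=2$, using $\rho\le 1/2$ by \eqref{eps0:1b}; this is exactly where $\zeta\ie$ enters. Thus
\[
d\ie^{-1}\|u\|^2_{L^2(F\ie)}\le C\,d\ie^{n-1}\rho^{1-n}\|u\|^2_{L^2(S_\rho)}+C\,d\ie\zeta\ie\|\nabla u\|^2_{L^2(R\ie)} .
\]

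\emph{Step 3 (average over $\rho$).} Averaging the last inequality over $\rho\in(C_{\rm sec}r\ie/2,\,C_{\rm sec}r\ie)$ converts the shell term into $C d\ie^{n-1}r\ie^{1-n}\cdot r\ie^{-1}\|u\|^2_{L^2(R\ie\setminus\B(C_{\rm sec}r\ie/2,x\ie))}$. Since this region lies inside $R\ie\setminus\overline{B\ie}$, the term is $\le Cd\ie^{n-1}r\ie^{-n}\|u\|^2_{L^2(R\ie\setminus\overline{B\ie})}$. Combining with Step 1 yields \eqref{lemma1:est}.

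The main obstacle is Step 2 for $n=2$: keeping the logarithmic factor as $|\ln d\ie|$ rather than $\ln(r\ie/d\ie)$. This works because $\rho\le1/2$, so $\ln(\rho/s)\le|\ln s|$. Integrating $s\,|\ln s|$ over $(0,2d\ie)$ gives $\le Cd\ie^2|\ln d\ie|$, which requires $d\ie$ to be bounded away from $1$; that holds since $2d\ie\le C_{\rm sec}r\ie\le 1/2$. Lemma~\ref{lemma:MK06} could serve as an alternative for Step 3, but the direct averaging is simpler.
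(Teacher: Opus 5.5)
Your proposal is correct and follows essentially the paper's route: the rescaled trace inequality \eqref{assump:shape2}, Cauchy--Schwarz along rays with weight $t^{1-n}$ (which produces $d\ie^{2-n}\zeta\ie$), and averaging over the annulus (which produces $r\ie^{-n}$); the only structural difference is that the paper first passes from $L^2(B\ie)$ to $L^2(\partial B\ie)$ via the rescaled Robin inequality \eqref{Robin:est:general} and runs the radial argument outward from $\partial B\ie$, whereas you run it directly from the interior spheres $S_s$, $s<2d\ie$. One small slip: in Step~3 you take $\rho\ge C_{\rm sec}r\ie/2$, which by \eqref{assump:D1+} only guarantees $\rho\ge d\ie$, not $\rho\ge 2d\ie$, so $s>\rho$ can occur in your $s$-integral (where your displayed ray inequality no longer holds as written); fix it either by integrating only over $s\in(0,d\ie)$, applying the trace bound directly to $u\restr_{D\ie}$ with $D\ie\subset B\ie$, or by noting that for $d\ie\le\rho<s\le 2d\ie$ the reversed fundamental theorem of calculus gives the same bound, since then $\int_\rho^s t^{1-n}\,dt\le C d\ie^{2-n}$.
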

	
	\begin{remark}A similar estimate to \eqref{lemma1:est} was established in \cite[Lemma~3.3]{GL21}. Compared to our result, 
		the coefficient in front of $\|\nabla u\|^2_{L^2(R_{i,\varepsilon})}$   is larger there, namely 
	$r\ie$ instead of $d\ie\zeta\ie$ (in our notation).
	\end{remark}

	\begin{proof}[Proof of Lemma~\ref{lemma1}]
		Using the coordinate transformation 
		\begin{gather}\label{coord}
			y\mapsto x=d\ie y +x\ie, 
		\end{gather}		
		which maps 
		$\D\ie$ to $D\ie$, we derive from \eqref{assump:shape2}:
		\begin{gather}\label{lemma1:1}
			\|u\|^2_{L^2(\partial D\ie)}\leq 
			C_{\rm tr}^2\left(d\ie^{-1}\|u\|_{L^2(D\ie)}^2+d\ie\|\nabla u\|_{L^2(D\ie)}^2\right),\
			\forall u\in H^1(D\ie).
		\end{gather}	
		
		Using \eqref{Robin:est:general} for $D=\B(1,0)$
		and then again applying the coordinate transformation \eqref{coord} (it maps $\B(1,0)$ to $B\ie$), we get the estimate
		\begin{align}
			\notag
			\|u\|^2_{L^2(D\ie)}&\leq
			\|u\|^2_{L^2(B\ie)}\\ \label{lemma1:2} &\leq (\Lambda_{\rm R}^1(\B(1,0)))^{-1}\left(d\ie\|u\|^2_{L^2(\partial B\ie)}+
			d\ie^2\|\nabla u\|^2_{L^2(B\ie)}\right),\ \forall u\in H^1(B\ie).
		\end{align}	
		
		To proceed further, we introduce the spherical coordinates $(r,\phi)$ in 
		$\overline{R\ie\setminus B\ie}$ with a pole at $x\ie$.
		Here $\phi=(\phi_1,{\dots},\phi_{n-1})$ are the angular coordinates ($\phi_j\in [0,\pi]$ for $j=1,\dots,n-2$, $\phi_{n-1}\in [0,2\pi)$) and
		$r\in [d\ie,C_{\rm sec}r\ie]$ is the radial coordinate. 
		One has
		\begin{gather}\label{FTC}
			u(d\ie,\phi)=u(r,\phi)-\int_{d\ie}^{r}{\partial
				u(\tau,\phi)\over\partial\tau}\d\tau,\ r\in (d\e,C_{\rm sec}r\ie),
		\end{gather}
		whence, using the Cauchy-Schwarz,	
		we deduce
		\begin{align} \notag
			|u(d\ie,\phi)|^2&\le 2|u(r,\phi)|^2+2\left|\int_{d\ie}^{r}{\partial
				u(\tau,\phi)\over\partial\tau}\d\tau\right|^2\\
			&\leq
			2|u(r,\phi)|^2+
			2\int_{d\ie}^{C_{\rm sec}r\ie}\tau^{1-n}\d\tau \int_{d\ie}^{C_{\rm sec}r\ie}\left|{\partial
				u(\tau,\phi)\over\partial\tau}\right|^2\tau^{n-1}\d\tau\notag\\
			&\leq 2|u(r,\phi)|^2+
			Cd\ie^{2-n}\zeta\ie \int_{d\ie}^{C_{\rm sec}r\ie}\left|{\partial
				u(\tau,\phi)\over\partial\tau}\right|^2\tau^{n-1}\d\tau\label{lemma1:3} 
		\end{align} 
		(on the last step in the case $n=2$ we used $\ln(C_{\rm sec}r\ie)<0$, which follows from \eqref{eps0:1b}).
		Multiplying \eqref{lemma1:3} by 
		$$\left(\ds\int_{d\ie}^{C_{\rm sec}r\ie} r^{n-1}\d r\right)^{-1}d\ie^{n-1}r^{n-1}\prod_{j=1}^{n-2}\left(\sin\phi_j\right)^{n-1-j},$$ 
		and   integrating
		over $r\in (d\ie,C_{\rm sec}r\ie)$, $\phi_j\in (0,\pi)$, $j=1,\dots,n-2$, 
		$\phi_{n-1}\in (0,2\pi)$, 
		we get
		\begin{equation*}
			\|u\|^2_{L^2(\partial B\ie))}
			\leq C\left(d\ie^{n-1}\left(\ds \int_{d\ie}^{C_{\rm sec}r\ie} r^{n-1}\d r\right)^{-1}\|u\|^2_{L^2(R\ie\setminus \overline{B\ie})}
			+
			d\ie\zeta\ie\|\nabla u\|^2_{L^2(R\ie\setminus \overline{B\ie})}\right).
		\end{equation*}
		It follows from \eqref{assump:D1+} that  
		$
		\ds	C r\ie^n\leq \int_{d\ie}^{C_{\rm sec}r\ie} r^{n-1}\d r
		$. Hence, we get
		\begin{equation}\label{lemma1:4}
			\|u\|^2_{L^2(\partial B\ie))}
			\leq C \left(d\ie^{n-1} r\ie^{-n}\|u\|^2_{L^2(R\ie\setminus \overline{B\ie})}
			+
			d\ie\zeta\ie\|\nabla u\|^2_{L^2(R\ie\setminus \overline{B\ie})}\right).
		\end{equation}
		
		Combining \eqref{lemma1:1}, \eqref{lemma1:2}, \eqref{lemma1:4}, and taking into account that $1<\zeta\ie$
		(the latter bound follows easily from \eqref{assump:D1+} and \eqref{eps0:1b}, and is required only in the case $n=2$), 	we arrive at the  required estimate \eqref{lemma1:est}.
		The lemma is proven. 	
	\end{proof}

	\begin{lemma}\label{lemma1+}
		One has the estimate
		\begin{gather}  \label{lemma1+:est}
			\|u\|^2_{L^2(D\ie)}\leq 
			C\left(d\ie^{n}r\ie^{-n}\|u\|^2_{L^2(R\ie\setminus\overline{B\ie})}+d\ie^2\zeta\ie\|\nabla u\|^2_{L^2(R\ie)}\right),\ \forall u\in H^1(R\ie).
		\end{gather} 
	\end{lemma}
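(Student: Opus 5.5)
The estimate \eqref{lemma1+:est} follows by combining two ingredients already available in the proof of Lemma~\ref{lemma1}: the Robin-type bound \eqref{lemma1:2} on the ball $B\ie$, and the boundary estimate \eqref{lemma1:4} on the sphere $\partial B\ie$. No new analytic tool is needed; the work is in tracking the powers of $d\ie$ and $r\ie$.

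First, since $D\ie\subset B\ie$, inequality \eqref{lemma1:2} gives
\begin{gather*}
\|u\|^2_{L^2(D\ie)}\le \|u\|^2_{L^2(B\ie)}\le C\left(d\ie\|u\|^2_{L^2(\partial B\ie)}+d\ie^2\|\nabla u\|^2_{L^2(B\ie)}\right).
\end{gather*}
This is \eqref{Robin:est:general} on $\B(1,0)$ rescaled via \eqref{coord}. Under the scaling, volume integrals pick up a factor $d\ie^n$ and surface integrals a factor $d\ie^{n-1}$, so the powers of $d\ie$ are exactly those displayed.

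Next, I substitute \eqref{lemma1:4} for $\|u\|^2_{L^2(\partial B\ie)}$. It was derived from \eqref{lemma1:3} by radial integration and uses only \eqref{assump:D1+} and \eqref{eps0:1b}, so it holds for all $u\in H^1(R\ie)$. Multiplying it by $d\ie$ gives
\begin{gather*}
d\ie\|u\|^2_{L^2(\partial B\ie)}\le C\left(d\ie^{n}r\ie^{-n}\|u\|^2_{L^2(R\ie\setminus\overline{B\ie})}+d\ie^2\zeta\ie\|\nabla u\|^2_{L^2(R\ie\setminus\overline{B\ie})}\right).
\end{gather*}
Adding the remaining term $d\ie^2\|\nabla u\|^2_{L^2(B\ie)}$ and using $\zeta\ie\ge1$ (noted at the end of the proof of Lemma~\ref{lemma1}), the two gradient contributions merge into $d\ie^2\zeta\ie\|\nabla u\|^2_{L^2(R\ie)}$. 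This yields \eqref{lemma1+:est}.

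There is no real obstacle here. The only point to check carefully is the scaling in \eqref{lemma1:2}: the boundary term must carry exactly the factor $d\ie$ relative to the volume term, so that multiplying \eqref{lemma1:4} by $d\ie$ produces the factor $d\ie^n r\ie^{-n}$ in front of the $L^2$ norm and $d\ie^2\zeta\ie$ in front of the gradient norm, rather than some other powers.
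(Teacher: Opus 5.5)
Your proposal is correct and is essentially the paper's proof: the paper also obtains \eqref{lemma1+:est} by inserting \eqref{lemma1:4} into \eqref{lemma1:2} (after using $D\ie\subset B\ie$) and merging the two gradient terms via $\zeta\ie\ge 1$. Your check of the scaling, namely that the boundary term in \eqref{lemma1:2} carries exactly one factor $d\ie$, is also right.
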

	
	\begin{proof}
		The estimate \eqref{lemma1+:est} immediately follows from  \eqref{lemma1:2}, \eqref{lemma1:4} and $1<\zeta\ie$ (as in the previous lemma, the latter is needed only in the case $n=2$).
	\end{proof}

	\begin{lemma}\label{lemma2}
		One has the estimate
		\begin{gather}\label{lemma2:est}
			\|u\|^2_{L^2(\Gamma\ie)}\leq 
			C\left( 
			r\ie^{n}d\ie^{1-n}\|u\|^2_{L^2(\partial D\ie)}+r\ie^{n}d\ie^{2-n}  \zeta\ie\|\nabla u\|^2_{L^2(\Gamma\ie)}\right),\ \forall u\in H^1(\Gamma\ie).
		\end{gather}
		
	\end{lemma}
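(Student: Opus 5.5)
The estimate \eqref{lemma2:est} is a reverse counterpart of Lemma~\ref{lemma1}: it controls the bulk $L^2$-norm on the cell $\Gamma\ie$ by the boundary norm on $\partial D\ie$ plus the gradient. I would prove it in three steps, passing successively from $\partial D\ie$ to the ball $F\ie$ surrounding the hole, then to the annulus $F\ie\setminus\overline{B\ie}$, and finally to the whole cell $\Gamma\ie$.

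\emph{Step 1: from $\partial D\ie$ to $F\ie$.} Rescale via \eqref{coord}, so that $F\ie$ becomes $\B(2,0)$ and the set between $F\ie$ and $D\ie$ becomes $\mathbf{G}\ie$. Given $u\in H^1(\mathbf{G}\ie)$, consider $\mathbf{P}\ie u\in H^1(\B(2,0))$. The Robin-type inequality on $\mathbf{G}\ie$ is what we need:
$$\|u\|^2_{L^2(\mathbf{G}\ie)}\le C\big(\|u\|^2_{L^2(\partial\D\ie)}+\|\nabla u\|^2_{L^2(\mathbf{G}\ie)}\big).$$
I would prove it by splitting $u=\la u\ra_{\mathbf{G}\ie}+(u-\la u\ra_{\mathbf{G}\ie})$. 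The oscillating part is controlled by \eqref{assump:shape3}. For the constant part, $|\la u\ra|^2\area(\partial\D\ie)\le 2\|u\|^2_{L^2(\partial\D\ie)}+2\|u-\la u\ra\|^2_{L^2(\partial\D\ie)}$. The last trace term is bounded as follows: extend $u-\la u\ra$ by $\mathbf{P}\ie$ (the extension of $u-c$ is $\mathbf{P}\ie u-c$), apply the trace bound \eqref{assump:shape2} on $\D\ie$, use \eqref{assump:shape4}, and then \eqref{assump:shape3} again. Finally, $\area(\partial\D\ie)$ is bounded below by \eqref{assump:shape1}, since the projection onto a hyperplane of $\D\ie\supset\B(C_{\rm inn},z\ie)$ has area at least that of a $(n-1)$-disk of radius $C_{\rm inn}$. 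Scaling back (the $L^2$ volume norm scales as $d^n$, the surface norm as $d^{n-1}$, the gradient as $d^{n-2}$) gives
$$\|u\|^2_{L^2(F\ie\setminus\overline{D\ie})}\le C\big(d\ie\|u\|^2_{L^2(\partial D\ie)}+d\ie^2\|\nabla u\|^2_{L^2(F\ie\setminus \overline{D\ie})}\big),$$
and hence the same bound on the annulus $F\ie\setminus\overline{B\ie}$.

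\emph{Step 2: from the annulus to $\Gamma\ie$.} Apply Lemma~\ref{lemma:MK06} with $D=\Gamma\ie$ (convex), $D_1=\Gamma\ie$ and $D_2=F\ie\setminus\overline{B\ie}$. Then $\vol(D_2)\sim d\ie^n$, $\vol(D_1)\le Cr\ie^n$ and $\diam D\le C r\ie$ by \eqref{Gamma:cond:2}, which yields
$$\|u\|^2_{L^2(\Gamma\ie)}\le C r\ie^n d\ie^{-n}\|u\|^2_{L^2(D_2)}+C r\ie^{n+2}d\ie^{-n}\|\nabla u\|^2.$$
The gradient coefficient here, $r^{n+2}d^{-n}$, is too large compared with the target $r^n d^{2-n}\zeta$, so a direct application of Lemma~\ref{lemma:MK06} fails. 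This is the main obstacle.

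To overcome it, I would instead use the radial argument of \eqref{FTC}–\eqref{lemma1:3} in reverse. Write $u(r,\phi)=u(\tau,\phi)+\int_\tau^r\partial_s u\,ds$ for $\tau\in(d\ie,2d\ie)$ and $r\in(2d\ie,C_{\rm sec}r\ie)$. By Cauchy–Schwarz,
$$|u(r,\phi)|^2\le 2|u(\tau,\phi)|^2+Cd\ie^{2-n}\zeta\ie\int|\partial_s u|^2 s^{n-1}ds.$$
Averaging in $\tau$ with weight $\tau^{n-1}$ over $(d\ie,2d\ie)$ (total weight $\sim d\ie^n$), then integrating over $r^{n-1}dr\,d\phi$ on $R\ie$ (volume $\sim r\ie^n$), gives
$$\|u\|^2_{L^2(R\ie)}\le C\big(r\ie^nd\ie^{-n}\|u\|^2_{L^2(F\ie\setminus \overline{B\ie})}+r\ie^nd\ie^{2-n}\zeta\ie\|\nabla u\|^2_{L^2(R\ie)}\big).$$

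\emph{Step 3: from $R\ie$ to $\Gamma\ie$.} Now apply Lemma~\ref{lemma:MK06} with $D_2=R\ie$, whose volume is $\sim r\ie^n$ by \eqref{assump:D1} and \eqref{Gamma:cond:2}. This time the volume ratio is bounded and the gradient coefficient is $Cr\ie^2\le Cr\ie^nd\ie^{2-n}\zeta\ie$, because $d\ie\le r\ie$ and $\zeta\ie\ge1$. Combining the three steps, the Step~1 bound enters multiplied by $r\ie^nd\ie^{-n}$, producing the terms $r\ie^nd\ie^{1-n}\|u\|^2_{L^2(\partial D\ie)}$ and $r\ie^nd\ie^{2-n}\|\nabla u\|^2$, which is exactly \eqref{lemma2:est}.
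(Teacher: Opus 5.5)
Your proof is correct, but it treats the region near the hole differently from the paper. The paper chains the following steps:
\begin{itemize}
\item Lemma~\ref{lemma:MK06} on $\Gamma\ie$ with $D_2=R\ie\setminus\overline{B\ie}$;
\item a radial, sphere-to-volume estimate from $\partial B\ie$ to $R\ie\setminus\overline{B\ie}$;
\item the trace inequality on $B\ie$;
\item Lemma~\ref{lemma:MK06} from $D\ie$ to $B\ie$;
\item finally, a Robin inequality \emph{inside} the hole $D\ie$, whose constant is controlled by the Faber--Krahn inequality for the Robin Laplacian combined with \eqref{assump:shape1}.
\end{itemize}
You instead stay outside the hole. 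Your Robin-type Poincar\'e inequality on the shell $\mathbf{G}\ie$ replaces the Faber--Krahn step. It is built from the mean-value splitting, \eqref{assump:shape3}, and the harmonic extension $\mathbf{P}\ie$, which transfers the interior trace bound \eqref{assump:shape2} to the exterior side. Your radial step is also volume-to-volume, averaging over $\tau\in(d\ie,2d\ie)$, rather than sphere-to-volume.

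What each route buys: yours is more elementary, since it needs no isoperimetric-type inequality for Robin eigenvalues. On the other hand, it consumes \eqref{assump:shape2}--\eqref{assump:shape4}, whereas the paper's proof of this lemma uses only \eqref{assump:shape1} among the shape assumptions. Since all four are standing hypotheses, both routes are valid here.

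One small point to tidy up. Your radial argument, with $r\in(2d\ie,C_{\rm sec}r\ie)$, only controls $u$ on $R\ie\setminus\overline{F\ie}$, and trivially on $F\ie\setminus\overline{B\ie}$. It says nothing about $u$ on $B\ie$, so the stated bound for $\|u\|_{L^2(R\ie)}$ needs one more line. Either apply Lemma~\ref{lemma:MK06} on the ball $F\ie$ with $D_1=B\ie$ and $D_2=F\ie\setminus\overline{B\ie}$, or simply take $D_2=R\ie\setminus\overline{B\ie}$ in your Step~3. The volume of that set is still at least $Cr\ie^n$ by \eqref{assump:D1+}.
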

	
	\begin{proof}
		Applying Lemma~\ref{lemma:MK06} with $D=\Gamma\ie$ (recall that the sets $\Gamma\ie$ are  convex polytops), $D_1=\Gamma\ie$, $D_2=R\ie\setminus\overline{B\ie}$ and taking into account \eqref{Gamma:cond:2} and \eqref{assump:D1+}, we easily get
		\begin{gather}\label{lemma2:1}
			\|u\|^2_{L^2(\Gamma\ie)}\leq C\left(\|u\|^2_{L^2(R\ie\setminus\overline{B\ie})}+r\ie^2\|\nabla u \|^2_{L^2(\Gamma\ie)}\right),\ \forall u\in H^1(\Gamma\ie).
		\end{gather}	 
		
		We also have the estimate
		\begin{equation}\label{lemma2:2}
			\|u\|^2_{L^2(R\ie\setminus \overline{B\ie})}
			\leq C \left(r\ie^{n}d\ie^{1-n}\|u\|^2_{L^2(\partial B\ie)}
			+
			r\ie^nd\ie^{2-n}\zeta\ie\|\nabla u\|^2_{L^2(R\ie\setminus \overline{B\ie})}\right),\ \forall u\in H^1(R\ie\setminus \overline{B\ie}).
		\end{equation}
		Its proof is similar to the proof of  \eqref{lemma1:4} and we left it to the reader
		(one has to introduce the polar coordinates $(r,\phi)$ in $\overline{R_\varepsilon \setminus B_\varepsilon}$
		and apply the fundamental theorem of calculus (cf.\eqref{FTC}), but now with $u(r,\phi)$ on the left-hand side and $u(d\e,\phi)$ on the right-hand side).
		
		Via the coordinate transformation \eqref{coord} mapping
		$\B(1,0)$ to $B\ie$, we infer
		from the trace estimate 
		$\|u\|_{L^2(\partial \B(1,0))}\leq C\|u\|_{H^1(\B(1,0))}$:
		\begin{gather} \label{lemma2:3}
			\|u\|^2_{L^2(\partial B\ie)}\leq 
			C\left(d\ie^{-1}\|u\|_{L^2(B\ie)}^2+d\ie\|\nabla u\|_{L^2(B\ie)}^2\right),\
			\forall u\in H^1(B\ie).
		\end{gather}	  
		
		Applying Lemma~\ref{lemma:MK06} with $D=D_1=B\ie$, $D_2=D\ie$ and taking into account \eqref{assump:shape1}, we get
		\begin{gather}\label{lemma2:4}
			\|u\|^2_{L^2(B\ie)}\leq 
			C\left(\|u\|_{L^2(D\ie)}^2+d\ie^2\|\nabla u\|_{L^2(B\ie)}^2\right),\
			\forall u\in H^1(B\ie).
		\end{gather}

		Finally, via the coordinate transformation \eqref{coord} mapping
		$\D\ie$ to $D\ie$ and the bound \eqref{Robin:est:general} being applied for 
		$D=\D\ie$, we  deduce 
		\begin{gather}\label{lemma2:5}
			\|u\|^2_{L^2(D\ie)}\leq 
			(\Lambda_{\rm R}^1(\D\ie))^{-1}\left(d\ie\|u\|^2_{L^2(\partial D\ie)}+d\ie^2\|\nabla u\|^2_{L^2(D\ie)}\right),\ \forall u\in H^1(D\ie).
		\end{gather}	
		The Faber-Krahn inequality for Robin problems  \cite[Theorem~1.1]{Da06} asserts that amongst all Lipschitz domains of fixed volume the
		ball has the smallest first Robin  eigenvalue. Hence
		\begin{gather*}
			\Lambda_{\rm R}^1(\B(\ell\ie,0)) \leq \Lambda_{\rm R}^1(\D\ie),\text{\quad where }
			\ell\ie\ceq \big(\vol(\D\ie)/\vol(\B(1,0))\big)^{1/n}. 
		\end{gather*}
		Due to \eqref{assump:shape1}, we get
		$C_{\rm inn} \leq \ell\ie \leq 1$,
		whence, using simple rescaling arguments and 
		the fact that the first Robin eigenvalue $\Lambda_R^\al(D)$
		is monotonically increasing with respect to $\al$,
		we conclude
		\begin{gather}\label{lemma2:6}
			\Lambda^1_{\rm R}(\B(\ell\ie,0))=
			\ell\ie^{-2}\Lambda^{ \ell\ie}_{\rm R}(\B(1,0))\geq 
			\ell\ie^{-2}\Lambda^{C_{\rm inn}}_{\rm R}(\B(1,0))\geq \Lambda^{C_{\rm inn}}_{\rm R}(\B(1,0)).
		\end{gather}
		From \eqref{lemma2:5} and \eqref{lemma2:6}, we conclude	
		\begin{gather}\label{lemma2:7}
			\|u\|^2_{L^2(D\ie)}\leq C\left(d\e\|u\|^2_{L^2(\partial D\ie)}+d\e^2\|\nabla u\|^2_{L^2(D\ie)}\right),\ 
			\forall u\in H^1(D\ie).
		\end{gather} 
		
		Combining \eqref{lemma2:1}--\eqref{lemma2:4}, \eqref{lemma2:7}, the bound $1<\zeta\ie$ (as in the previous lemmas, we need it only in the case $n=2$) and the bound
		$r\ie^2 \le Cr\ie^n d\ie^{2-n}$ (it follows from \eqref{assump:D1+})
		we arrive at the desired estimate \eqref{lemma2:est}. The lemma is proven.
	\end{proof}

	\begin{lemma}\label{lemma3}
		One has the estimate
		\begin{gather}  \label{lemma3:est}
			|\la u\ra_{\partial  D\ie}-\la u\ra_{\Gamma\ie}|^2\leq 
			Cd\ie^{2-n}\zeta\ie\|\nabla u\|^2_{L^2(\Gamma\ie)},\ \forall u\in H^1(\Gamma\ie).
		\end{gather} 
	\end{lemma}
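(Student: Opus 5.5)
Write $u-\la u\ra_{\Gamma\ie}$ in place of $u$ and set $w\ceq u-\la u\ra_{\Gamma\ie}$. Then $\la w\ra_{\Gamma\ie}=0$, $\nabla w=\nabla u$, and $\la u\ra_{\partial D\ie}-\la u\ra_{\Gamma\ie}=\la w\ra_{\partial D\ie}$. So it suffices to bound $|\la w\ra_{\partial D\ie}|^2$. By Cauchy--Schwarz,
$$|\la w\ra_{\partial D\ie}|^2\le (\area(\partial D\ie))^{-1}\|w\|^2_{L^2(\partial D\ie)}.$$
Next I need a lower bound on the surface area. Assumption \eqref{assump:shape1}, together with the isoperimetric inequality (or simply the projection onto a hyperplane of the inscribed ball $\B(C_{\rm inn},z\ie)\subset\D\ie$), gives $\area(\partial\D\ie)\ge C$. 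After rescaling, $\area(\partial D\ie)\ge Cd\ie^{n-1}$. It therefore remains to prove
$$\|w\|^2_{L^2(\partial D\ie)}\le C d\ie\zeta\ie\|\nabla u\|^2_{L^2(\Gamma\ie)}.$$

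For this I apply Lemma~\ref{lemma1} to $w$, restricted to $R\ie\subset\Gamma\ie$:
$$\|w\|^2_{L^2(\partial D\ie)}\le C\left(d\ie^{n-1}r\ie^{-n}\|w\|^2_{L^2(\Gamma\ie)}+d\ie\zeta\ie\|\nabla u\|^2_{L^2(\Gamma\ie)}\right).$$
The first term is handled by a Poincaré inequality on the convex cell $\Gamma\ie$ for mean-zero functions. Payne--Weinberger, or Lemma~\ref{lemma:MK06} with $D=D_1=D_2=\Gamma\ie$ applied to $w$ minus its mean, gives
$$\|w\|^2_{L^2(\Gamma\ie)}\le C(\diam\Gamma\ie)^2\|\nabla u\|^2_{L^2(\Gamma\ie)}\le Cr\ie^2\|\nabla u\|^2_{L^2(\Gamma\ie)},$$
using \eqref{Gamma:cond:2}. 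The first term is then bounded by $Cd\ie^{n-1}r\ie^{2-n}\|\nabla u\|^2_{L^2(\Gamma\ie)}$. Since $d\ie\le Cr\ie$ by \eqref{assump:D1+} and $n\ge2$, we have $d\ie^{n-1}r\ie^{2-n}=d\ie(d\ie/r\ie)^{n-2}\le Cd\ie\le Cd\ie\zeta\ie$, where $\zeta\ie\ge1$ as noted in the previous proofs.

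Dividing by $\area(\partial D\ie)\ge Cd\ie^{n-1}$ yields the bound $Cd\ie^{2-n}\zeta\ie\|\nabla u\|^2_{L^2(\Gamma\ie)}$, which is \eqref{lemma3:est}.

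I expect the main subtle point to be the uniform lower bound on $\area(\partial\D\ie)$. It follows because $\D\ie$ contains a ball of radius $C_{\rm inn}$ and its complement is connected: every line through the inscribed ball in a fixed direction must exit $\D\ie$, so the projection of $\partial\D\ie$ covers an $(n-1)$-disc of radius $C_{\rm inn}$. The Poincaré constant on convex domains is also uniform, and here it is supplied by Lemma~\ref{lemma:MK06}.
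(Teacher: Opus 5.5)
Your proof is correct, but it takes a different route from the paper's.

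\textbf{The paper's route.} It proves \eqref{lemma3:est} through a chain of four mean-value comparisons: $\la u\ra_{\partial D\ie}\to\la u\ra_{B\ie}\to\la u\ra_{\partial B\ie}\to\la u\ra_{R\ie\setminus\overline{B\ie}}\to\la u\ra_{\Gamma\ie}$. The ingredients are the rescaled trace inequality with the Poincar\'e inequality on $B\ie$, a radial-integration estimate for the annulus step \eqref{lemma3:2} quoted from an external reference, and Payne--Weinberger on $\Gamma\ie$.

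\textbf{Your route.} You subtract $\la u\ra_{\Gamma\ie}$ first and prove the stronger trace-type bound $\|u-\la u\ra_{\Gamma\ie}\|^2_{L^2(\partial D\ie)}\le Cd\ie\zeta\ie\|\nabla u\|^2_{L^2(\Gamma\ie)}$. This follows directly from Lemma~\ref{lemma1} and a Poincar\'e inequality on the convex cell, using $d\ie^{n-1}r\ie^{2-n}\le Cd\ie\le Cd\ie\zeta\ie$. You then pass to means via Cauchy--Schwarz and $\area(\partial D\ie)\ge Cd\ie^{n-1}$.

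This is exactly the computation the paper itself performs later when estimating $J_{1,\eps}$ in Lemma~\ref{lemma:I1}. Your version therefore makes Lemma~\ref{lemma3} a short corollary of Lemma~\ref{lemma1}, avoids the external citation, and yields the stronger $L^2(\partial D\ie)$ bound as a byproduct. The paper's chain, in exchange, records the intermediate scale-by-scale mean-value comparisons, which it presents as being of independent interest.

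\textbf{One correction.} Lemma~\ref{lemma:MK06} with $D=D_1=D_2=\Gamma\ie$ is vacuous: it reads $\|w\|^2_{L^2(\Gamma\ie)}\le 2\|w\|^2_{L^2(\Gamma\ie)}+\dots$ and yields no Poincar\'e inequality. So your closing claim that the uniform Poincar\'e constant is ``supplied by Lemma~\ref{lemma:MK06}'' is wrong. It is Payne--Weinberger, as in \eqref{LambdaN:Gamma:est}--\eqref{Poincare:Gamma}, that gives $\|w\|^2_{L^2(\Gamma\ie)}\le Cr\ie^2\|\nabla u\|^2_{L^2(\Gamma\ie)}$. Since you also list Payne--Weinberger, the proof stands.

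A minor point: in your projection argument for the area lower bound, boundedness of $\D\ie$ already forces every line through the inscribed ball to meet $\partial\D\ie$. Connectedness of the complement is not needed.
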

	
	\begin{proof}
		One has:
		\begin{align}  \notag
			|\la u\ra_{\partial  D\ie}-\la u\ra_{B\ie}|^2&\leq 
			(\area(\partial D\ie))^{-1}\|u-\la u\ra_{B\ie}\|^2_{L^2(\partial D\ie)}
			\\\notag
			&\leq
			C  (\area(\partial D\ie))^{-1}
			\left(d\ie^{-1}\|u-\la u\ra_{B\ie}\|^2_{L^2( D\ie)}+d\ie\|\nabla u\|^2_{L^2(D\ie)}\right)\\ \label{lm3:1}
			&\leq
			C(\area(\partial D\ie))^{-1}
			d\ie\|\nabla u\|^2_{L^2(B\ie)},\quad \forall u\in H^1(B\ie),
		\end{align}
		where on the first step we applied the Cauchy-Schwarz inequality,
		on the second step we used \eqref{lemma1:1}, and on	
		the last step we utilized the inclusion 
		$D\ie\subset B\ie$ and the Poincar\'e inequality
		$$ 
		\|u-\la u\ra_{B\ie}\|^2_{L^2( B\ie)}\leq
		d\ie^2 (\Lambda_{\rm N}(\B(0,1)))^{-1}\|\nabla u\|_{L^2(B\ie)}^2,\ \forall u\in H^1(B\ie).
		$$
		The latter estimate follows from \eqref{Neumann:est:general} (applied for $D=\B(1,0)$)
		by virtue of the coordinate transformation \eqref{coord}.
		Also, using the isoperimetric inequality
		\begin{gather}\label{isoper}
			\area(\partial \D\ie) \geq n (\vol(\D\ie))^{(n-1)/n}(\vol(\B(1,0)))^{1/n},
		\end{gather}
		and \eqref{assump:shape1}, we obtain
		\begin{gather}\label{area:est+}
			\area(\partial D\ie)=d\ie^{n-1}\area(\partial \D\ie)\geq Cd\ie^{n-1}.
		\end{gather}
		Combining \eqref{lm3:1} and \eqref{area:est+}, we get
		\begin{gather} \label{lemma3:1} 
			|\la u\ra_{\partial  D\ie}-\la u\ra_{B\ie}|^2 
			\leq
			C d\ie^{2-n}\|\nabla u\|^2_{L^2(B\ie)},\ \forall u\in H^1(B\ie).
		\end{gather}
		By a similar argument, we derive the bound
		\begin{gather} \label{lemma3:1+} 
			|\la u\ra_{\partial  B\ie}-\la u\ra_{B\ie}|^2 
			\leq
			C d\ie^{2-n}\|\nabla u\|^2_{L^2(B\ie)},\ \forall u\in H^1(B\ie).
		\end{gather}
		
		Next, we have the estimate
		\begin{gather}  \label{lemma3:2}
			|\la u\ra_{\partial  B\ie}-\la u\ra_{R\ie\setminus \overline{B\ie}}|^2 
			\leq
			C d\ie^{2-n}\zeta\ie\|\nabla u\|^2_{L^2(R\ie\setminus \overline{B\ie})},\
			\forall u\in H^1(R\ie\setminus \overline{B\ie}).
		\end{gather} 
		For the proof of \eqref{lemma3:2}, see \cite[Lemma~2.1.I]{Kh13}; it follows the same line of argument as the proofs of \eqref{lemma1:4} and \eqref{lemma2:2}.
		
		Finally, one has
		\begin{align}\notag
			|\la u\ra_{R\ie\setminus \overline{B\ie}}-\la u\ra_{\Gamma\ie}|^2=
			|\la u-\la u\ra_{\Gamma\ie}\ra_{R\ie\setminus \overline{B\ie}}|^2 
			&\leq	
			(\vol(R\ie\setminus B\ie))^{-1}\|u-\la u\ra_{\Gamma\ie}\|^2_{L^2(R\ie\setminus \overline{B\ie})}\\
			\notag
			&\leq
			(\vol(R\ie\setminus B\ie))^{-1}\|u-\la u\ra_{\Gamma\ie}\|^2_{L^2(\Gamma\ie)}
			\\
			\label{lemma3:3}
			&\leq
			\big(\vol(R\ie\setminus B\ie)\Lambda_{\rm N}(\Gamma\ie)\big)^{-1}\|\nabla u\|^2_{L^2(\Gamma\ie)}
		\end{align}
		Since the sets $\Gamma\ie$ are assumed to be convex, we have the  estimate
		\begin{gather}
			\label{LambdaN:Gamma:est}
			\Lambda_{\rm N}(\Gamma\ie)\geq  {\pi^2}(2\wt  r\ie)^{-2},
		\end{gather}
		which follows from 	 the following Payne-Weinberger bound \cite{PW60}\footnote{Note that the original proof in \cite{PW60} had a mistake for $n\ge 3$; it was later corrected in \cite{B03}.}:
		$$
		\Omega\text{ is convex domain}\ \Longrightarrow\ \Lambda_{\rm N}(\Omega)\geq {\pi^2}(\diam  \Omega)^{-2}$$
		and the bound $\diam(\Gamma\ie)\le 2\wt r\ie$.
		Due to \eqref{assump:D1+}, we have 
		\begin{gather}\label{volRB}
			\vol(R\ie\setminus B\ie)\ge C r\ie^n.
		\end{gather}
		From \eqref{lemma3:3}--\eqref{volRB} and \eqref{Gamma:cond:2}, we conclude the estimate
		\begin{align}\label{lemma3:4}
			|\la u\ra_{R\ie\setminus \overline{B\ie}}-\la u\ra_{\Gamma\ie}|^2 \leq 
			Cr\ie^{2-n}\|\nabla u\|^2_{L^2(\Gamma\ie)}.
		\end{align}
		Combining \eqref{lemma3:1}--\eqref{lemma3:2}, \eqref{lemma3:4}, and taking into account that
		$1<\zeta\ie$ (we need the latter bound only in the case $n=2$) and \eqref{assump:D1+}, we arrive
		at    \eqref{lemma3:est}. The lemma is proven.
	\end{proof}
	
	Recall that   $T\e$ and $\rho\e$ are given in 
	\eqref{re:Te:def} and \eqref{assump:Te}, respectively.
	
	\begin{lemma} \label{lemma4}
		One has the  estimate
		\begin{gather} \label{lemma4:est} 
			\|u\|_{L^2(T\e)}\leq 
			C \rho\e \|\nabla u\|_{L^2(\Omega)},\ \forall u\in H^1_0(\Omega).
		\end{gather} 
	\end{lemma}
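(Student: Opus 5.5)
The plan is a one-dimensional Poincaré (Friedrichs) inequality in the normal direction to $\partial\Omega$, using the tubular coordinates \eqref{tau} and the vanishing trace of $u$ on $\partial\Omega$. By density it suffices to prove \eqref{lemma4:est} for $u\in C_c^\infty(\Omega)$ (real- or complex-valued), extended by zero outside $\Omega$. By \eqref{assump:Te}, $T\e$ is contained in the collar
$$
S_{\rho\e}\ceq\{x-t\nu(x):\ x\in\partial\Omega,\ t\in(0,\rho\e)\},
$$
so it is enough to show $\|u\|_{L^2(S_{\rho\e})}\le C\rho\e\|\nabla u\|_{L^2(S_{C_\Omega})}$. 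Here $C_\Omega$ is the collar width from \eqref{tau}, and the choice $\rho\e<C_\Omega$ is what makes the collar coordinates available.

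The first step is a change of variables. On $\partial\Omega\times(0,C_\Omega)$ the map $(x,t)\mapsto x-t\nu(x)$ is one-to-one, and for a uniformly regular $C^2$ boundary its Jacobian is $J(x,t)=\prod_j(1-t\kappa_j(x))$, where the $\kappa_j$ are the principal curvatures, which are uniformly bounded. After possibly shrinking $C_\Omega$ (which only changes constants), we get $c\le J(x,t)\le c^{-1}$ uniformly in $x,t$. Then
$$
\|u\|^2_{L^2(S_{\rho\e})}\le c^{-1}\int_{\partial\Omega}\int_0^{\rho\e}|u(x-t\nu(x))|^2\d t\d s_x .
$$

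The second step uses the fundamental theorem of calculus along the normal segment. Since $u(x)=0$ for $x\in\partial\Omega$,
$$
u(x-t\nu(x))=-\int_0^t\nabla u(x-\tau\nu(x))\cdot\nu(x)\d\tau .
$$
Cauchy--Schwarz then gives
$$
|u(x-t\nu(x))|^2\le t\int_0^{\rho\e}|\nabla u(x-\tau\nu(x))|^2\d\tau
$$
for $t<\rho\e$. Integrating in $t\in(0,\rho\e)$ produces the factor $\rho\e^2/2$. Integrating over $\partial\Omega$ and converting back with the lower Jacobian bound gives
$$
\|u\|^2_{L^2(S_{\rho\e})}\le C\rho\e^2\|\nabla u\|^2_{L^2(S_{\rho\e})}\le C\rho\e^2\|\nabla u\|^2_{L^2(\Omega)} .
$$
Taking square roots and passing to the limit by density in $H^1_0(\Omega)$ proves \eqref{lemma4:est}; both sides are continuous in the $H^1$ norm.

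The main obstacle is the first step: justifying the uniform two-sided Jacobian bound and the global injectivity of the collar coordinates when $\Omega$ is unbounded. This is exactly what uniform $C^2$ regularity provides, namely uniformly bounded curvatures and the fixed width $C_\Omega$ in \eqref{tau}. The rest is routine. The trace $u|_{\partial\Omega}=0$ enters only through smooth compactly supported approximants, so no trace theory beyond density is needed.
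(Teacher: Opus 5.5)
Your argument is correct, but it takes a different route from the paper. The paper does not integrate along normals. It introduces the Laplacian $\mathscr{A}_\varepsilon$ on the whole collar $\widehat{T}_\varepsilon$, with Dirichlet conditions on $\partial\Omega$ and Neumann conditions on the inner boundary. It then quotes the thin-layer asymptotics $\inf\sigma(\mathscr{A}_\varepsilon)=(\pi/(2\rho_\varepsilon))^2+C_{\rm curv}\rho_\varepsilon^{-1}+\mathcal{O}(1)$ from \cite{Kr14}, and reads off $\|u\|_{L^2(\widehat{T}_\varepsilon)}\le C\rho_\varepsilon\|\nabla u\|_{L^2(\widehat{T}_\varepsilon)}$ from the min-max principle.

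Your one-dimensional Friedrichs inequality in tubular coordinates proves the same localized bound directly. It is self-contained and makes explicit exactly which geometric input is used: injectivity of the collar map \eqref{tau} and uniform two-sided bounds on the Jacobian $\prod_j(1-t\kappa_j)$, i.e.\ uniformly bounded principal curvatures. It also avoids having to check that the cited spectral asymptotics hold uniformly along a possibly unbounded $\partial\Omega$. The price is a non-sharp constant: the spectral route gives $C\to 2/\pi$ as $\rho_\varepsilon\to0$, whereas yours gives roughly $1/\sqrt{2}$. This is irrelevant for the lemma.

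One small point concerns your step of shrinking $C_\Omega$. It means your argument covers only $\rho_\varepsilon$ below a fixed threshold, i.e.\ all but finitely many $\varepsilon=m^{-1}$. The paper's proof has the same feature, since it rests on an asymptotic formula as $\rho_\varepsilon\to0$. The lemma is in any case used only for small $\varepsilon$ (see the footnote accompanying \eqref{eps0:1b} and the absorption step after \eqref{Cpm:2}), so this is harmless.
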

	
	\begin{proof}
		Recall that $T\e$ is a subset of the set $$\wh T\e\ceq \{y= x - t \nu(x)\in\R^n:\  x\in\partial\Omega,\ t\in (0,\,\rho\e)\}.$$
		We denote by $\A\e$ the Laplace operator on $\widehat{T}\e$ subject to the Dirichlet conditions on $\partial\Omega$ and the Neumann conditions on $\partial \widehat{T}\e\setminus\partial\Omega$. 
		One has the following asymptotics \cite{Kr14}:
		\begin{equation}\label{lemma4:1}
			\inf \sigma(\A\e)
			=\left(\frac{\pi}{2  \rho\e}\right)^2+\frac{C_{\rm curv}}{\rho\e} +\mathcal{O}(1),\ \rho\e\to 0,\end{equation}
		where  $C_{\rm curv}$ is a constant depending on the principal curvatures of $\partial\Omega$.
		The validity of \eqref{lemma4:1} requires 
		the map~\eqref{tau} to be one-to-one on 
		$\partial\Omega \times (0,\,\rho\e)$, which is indeed the case.
		
		Using the minimax principle, we infer from \eqref{lemma4:1}: 
		\begin{gather}\label{lemma4:2}
			\|u\|_{L^2(\widehat{T}\e)}\leq C \rho\e\|\nabla u\|_{L^2(\widehat{T}\e)},\ \forall
			u\in H^1(\widehat{T}\e)\text{ with }u\!\restr_{\partial\Omega}=0.
		\end{gather}
		Combining \eqref{lemma4:2} and the inclusions $T\e\subset\wh T\e\subset\Omega$, we arrive at
		\eqref{lemma4:est}. The lemma is proven.
	\end{proof}
	
	The last result concerns a suitable  operator   of extension  from $H^1(\Omega\e)$
	to $H^1(\Omega)$. First, we introduce such an operator in the vicinity of each hole 
	$D\ie$. 
	Recall that $F\ie = \B(2d\ie,x\ie)$ and the operator $\mathbf{P}\ie$ is defined in Subsection~\ref{subsec:22}.
	We  define $\P\ie:H^1(F\ie\setminus \overline{D\ie})\to H^1(F\ie)$ as follows,
	$$(\P\ie u)(x)=(\mathbf{P}\ie \mathbf{u})((x-x\ie)d\ie^{-1}),\ x\in F\ie ,$$
	where $\mathbf{u}\in H^1(\mathbf{G}\ie)$ is given by
	$\mathbf{u}(y)=u(d\ie y+x\ie)$;
	recall that $\mathbf{G}\ie=\B(2,0)\setminus\overline{\D\ie}$.  
	Finally, we introduce the operator $\P\e:H^1(\Omega\e)\to H^1(\Omega)$ by
	\begin{gather}\label{Pe}
		(\P\e u)(x)=
		\begin{cases}
			u(x),&x\in\Omega\e,
			\\
			(\P\ie u)(x),&x\in D\ie,
			\ i\in\I\e.
		\end{cases}
	\end{gather}
	Evidently, one has $\Delta (\P\e u)=0\text{ on }\cup_{i\in\I\e}D\ie.$

	\begin{lemma}
		\label{lemma5}
		One has the estimates:
		\begin{align}\label{lemma5:est1}
			\|\nabla (\P\e u)\|_{L^2(\Omega)}\leq C_1\|\nabla u\|_{L^2(\Omega\e)},&\quad  \forall u\in H^1(\Omega\e),\\	\label{lemma5:est2}
			\|\P\e u\|_{L^2(\Omega)}\leq C_2\| u\|_{H^1(\Omega\e)},&\quad \forall u\in H^1(\Omega\e).
		\end{align} 
	\end{lemma}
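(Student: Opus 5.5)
The estimates are local, so I will prove them hole by hole and then sum over $i\in\I\e$. Since the $D\ie$ lie in pairwise disjoint sets $F\ie\subset\Gamma\ie$, and $\P\e u=u$ outside $\cup_i D\ie$, it suffices to show, for each $i$,
\begin{gather*}
\|\nabla(\P\ie u)\|^2_{L^2(D\ie)}\le C\|\nabla u\|^2_{L^2(F\ie\setminus\overline{D\ie})},\qquad
\|\P\ie u\|^2_{L^2(D\ie)}\le C\| u\|^2_{H^1(F\ie\setminus\overline{D\ie})},
\end{gather*}
with $C$ independent of $\eps$ and $i$. Summing over $i$ and adding the contribution of $\Omega\e$ then gives \eqref{lemma5:est1} and \eqref{lemma5:est2}.

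\emph{Gradient estimate.} Assumption \eqref{assump:shape4} controls the full $H^1$ norm, but it does not scale correctly under the dilation \eqref{coord}. I will therefore first subtract the mean. Put $\mathbf{u}(y)=u(d\ie y+x\ie)$ and $c\ceq\la \mathbf{u}\ra_{\mathbf{G}\ie}$. Harmonic extension commutes with adding constants, so $\mathbf{P}\ie(\mathbf{u}-c)=\mathbf{P}\ie\mathbf{u}-c$. Combining \eqref{assump:shape4} with the Poincar\'e inequality \eqref{Neumann:est:general} on $\mathbf{G}\ie$ and the uniform bound \eqref{assump:shape3}, I get
\begin{gather*}
\|\nabla \mathbf{P}\ie\mathbf{u}\|_{L^2(\B(2,0))}\le C_{\rm P}\|\mathbf{u}-c\|_{H^1(\mathbf{G}\ie)}\le C_{\rm P}\big(1+C_{\rm N}^{-1}\big)^{1/2}\|\nabla\mathbf{u}\|_{L^2(\mathbf{G}\ie)}.
\end{gather*}
The Dirichlet integral transforms under $x=d\ie y+x\ie$ with the factor $d\ie^{n-2}$ on both sides. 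Hence this bound rescales exactly to the first local estimate, and \eqref{lemma5:est1} follows.

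\emph{$L^2$ estimate.} Scaling alone is not enough here: on $D\ie$ I need $\|\P\ie u\|^2_{L^2(D\ie)}$ controlled by quantities near the hole. I will apply Lemma~\ref{lemma:MK06} with $D=D_1=B\ie$ (convex) and $D_2=B\ie\setminus\overline{D\ie}$, to the function $\P\ie u\in H^1(F\ie)$. This requires $\vol(B\ie\setminus D\ie)\ge C d\ie^n$, and that is the one point needing a geometric argument. In rescaled form the condition is $\vol(\B(1,0)\setminus\D\ie)\ge c>0$ uniformly. If it failed, the Neumann problem on $\mathbf{G}\ie$ would have a small eigenvalue: the test function $\mathbb{1}_{\B(1,0)}-\text{const}$ restricted to $\mathbf{G}\ie$ has zero gradient away from a set of small measure, and it can be cut off smoothly across $\partial\B(1,0)$ inside that region. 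This would contradict \eqref{assump:shape3}. Alternatively, the bound can be read off from \eqref{assump:shape4}: the unit ball is the smallest enclosing ball of $\D\ie$, so $\partial\D\ie$ touches $\partial\B(1,0)$, and the uniform extension bound forbids $\mathbf{G}\ie\cap\B(1,0)$ from degenerating.

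Granting this, Lemma~\ref{lemma:MK06} gives
\begin{gather*}
\|\P\ie u\|^2_{L^2(D\ie)}\le C\|u\|^2_{L^2(B\ie\setminus\overline{D\ie})}+Cd\ie^2\|\nabla \P\ie u\|^2_{L^2(B\ie)}.
\end{gather*}
The last term is bounded by the gradient estimate already proved, using $d\ie\le 1$. This gives the second local estimate, and summing yields \eqref{lemma5:est2}.

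\emph{Main obstacle.} I expect the hardest step to be justifying the uniform lower bound $\vol(\B(1,0)\setminus\D\ie)\ge c$. If this turns out to be awkward, I will avoid it by replacing $D_2$ with the annulus $F\ie\setminus\overline{B\ie}$ and $D=D_1$ with the ball $F\ie$. That annulus has volume comparable to $d\ie^n$ trivially, and Lemma~\ref{lemma:MK06} then bounds $\|\P\ie u\|^2_{L^2(F\ie)}$ by $C\|u\|^2_{L^2(F\ie\setminus\overline{B\ie})}+Cd\ie^2\|\nabla\P\ie u\|^2_{L^2(F\ie)}$. Combined with the gradient estimate, this gives the result without any extra geometric input.
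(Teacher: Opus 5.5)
Your gradient estimate is the paper's argument: subtract the mean, apply \eqref{assump:shape4} together with the Poincar\'e inequality and \eqref{assump:shape3} on $\mathbf{G}\ie$, and rescale.

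For the $L^2$ bound, your primary route rests on a false claim. The uniform bound $\vol(\B(1,0)\setminus\D\ie)\ge c>0$ fails already in the paper's model case $\D\ie=\B(1,0)$ (Remark~\ref{rem:Q}). There $B\ie=D\ie$, so $D_2=B\ie\setminus\overline{D\ie}$ is empty and Lemma~\ref{lemma:MK06} cannot be applied at all. Both of your heuristic justifications break on this example. In that case $\mathbf{G}\ie=\B(2,0)\setminus\overline{\B(1,0)}$ is a fixed annulus, which has a positive Neumann gap and a bounded harmonic extension operator. So neither \eqref{assump:shape3} nor \eqref{assump:shape4} gives any lower bound on the volume of $\B(1,0)\setminus\D\ie$. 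That the unit sphere touches $\partial\D\ie$ is irrelevant.

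Your fallback is exactly the paper's proof, and it is correct. It applies Lemma~\ref{lemma:MK06} with $D=D_1=F\ie$ and $D_2=F\ie\setminus\overline{B\ie}$, an annulus whose volume is comparable to $d\ie^n$ for trivial reasons. It then combines this with the gradient estimate, $\P\ie u=u$ on $F\ie\setminus\overline{B\ie}$, and $d\ie\le 1$. Make that the argument and drop the first route entirely.
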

	
	\begin{proof}
		First of all, we observe that 
		\begin{gather}\label{Pie:prop}
			\P\ie u	= \P\ie (u-\la u\ra_{F\ie \setminus \overline{D\ie}}) +
			\la u\ra_{F\ie\setminus \overline{D\ie}}\text{\quad in }D\ie.
		\end{gather}
		since  the harmonic extension of a constant function $\la u\ra_{F\ie\setminus \overline{D\ie}}$ is the constant $\la u\ra_{F\ie\setminus \overline{D\ie}}$ itself.
		
		Using the coordinate transformation \eqref{coord} mapping
		$\B(2,0)$ and  $\D\ie$ to $F\ie$ and $D\ie$, respectively, we deduce from \eqref{assump:shape4}:
		\begin{multline}\label{lemma5:1}
			d\ie^{-2}\|\P\ie (u-\la u\ra_{F\ie\setminus \overline{D\ie}})\|^2_{L^2(F\ie)}+ 
			\|\nabla(\P\ie (u-\la u\ra_{F\ie\setminus \overline{D\ie}}))\|^2_{L^2(F\ie)}\\\leq 
			C \left(d\ie^{-2}\| u-\la u\ra_{F\ie\setminus \overline{D\ie}} \|^2_{L^2(F\ie\setminus\overline{D\ie})}+
			\|\nabla  u \|^2_{L^2(F\ie\setminus\overline{D\ie})}\right).
		\end{multline}
		One has the Poincar\'{e} inequality (see \eqref{Neumann:est:general}):
		\begin{align}\notag
			\| u-\la u\ra_{F\ie\setminus \overline{D\ie}}\|^2_{L^2(F\ie\setminus \overline{D\ie})}&\leq 
			(\Lambda_{\rm N}(F\ie\setminus \overline{D\ie}))^{-1}\|\nabla  u\|^2_{L^2(F\ie\setminus \overline{D\ie})}
			\\&= (\Lambda_{\rm N}(\mathbf{G}\ie))^{-1}d\ie^2\|\nabla  u\|^2_{L^2(F\ie\setminus \overline{D\ie})}.\label{lemma5:2}
		\end{align}
		Combining  \eqref{assump:shape3} with \eqref{lemma5:1}--\eqref{lemma5:2}, we get	
		\begin{gather}\label{lemma5:3}
			d\ie^{-2}\|\P\ie (u-\la u\ra_{F\ie\setminus \overline{D\ie}})\|^2_{L^2(F\ie)}+ 
			\|\nabla(\P\ie (u-\la u\ra_{F\ie\setminus \overline{D\ie}}))\|^2_{L^2(F\ie)}\leq 
			C  \|\nabla  u \|^2_{L^2(F\ie\setminus\overline{D\ie})} .
		\end{gather}	
		In particular, it follows from \eqref{lemma5:3} and \eqref{Pie:prop} that 
		\begin{align}\notag
			\sum_{i\in\I\e}\|\nabla(\P\e u)\|^2_{L^2( F\ie)}&=\sum_{i\in\I\e}	\|\nabla(\P\ie (u-\la u\ra_{F\ie\setminus \overline{D\ie}}))\|^2_{L^2(F\ie)}
			\\
			&\leq 
			C  \sum_{i\in\I\e}\|\nabla  u \|^2_{L^2( F\ie\setminus\overline{D\ie})}\leq 
			C\|\nabla u\|^2_{L^2(\Omega\e)},\label{lemma5:4}
		\end{align}	 
		From  \eqref{lemma5:4}, taking into account that 
		$\P\e u = u $ in $\Omega\e$, we conclude  \eqref{lemma5:est1}.
		Finally, by Lemma~\ref{lemma:MK06}  (applied with $D=D_1=F\ie$, 
		$D_2=F\ie\setminus\overline{B\ie}$) and \eqref{lemma5:4}, we get
		\begin{align*}
			\sum_{i\in\I\e}\|\P\e u\|^2_{L^2(F\ie)}&\leq 
			C\sum_{i\in\I\e}\left(\|\P\e u\|^2_{L^2(F\ie\setminus\overline{B\ie})} + 
			d\ie^2\|\nabla (\P\e  u)\|^2_{L^2(F\ie)}\right)
			\\&=
			C\sum_{i\in\I\e}\left(\| u\|^2_{L^2(F\ie\setminus\overline{B\ie})} + 
			d\ie^2\|\nabla (\P\e  u)\|^2_{L^2(F\ie)}\right)\leq
			C_1\|u\|^2_{H^1(\Omega\e)},
		\end{align*}
	whence, taking into account that 
	$\P\e u = u $ in $\Omega\e$, we conclude  the estimate \eqref{lemma5:est2}.
		The lemma is proven.
	\end{proof}
	
	\section{Proof of the main results\label{sec4}}
	
	\subsection{Preliminaries}
	
	First, we present two simple results that will be used repeatedly throughout the sequel.
	
	\begin{lemma}\label{lm:4:1}
		The following estimates hold:
		\begin{gather}\label{dQd}
			\exists C_1,\,C_2>0\ \forall\eps>0\ \forall i\in \I\e:\quad
			C_1 Q\ie \le d\ie^{n-1}r\ie^{-n} \leq C_2 Q\ie.
		\end{gather}
	\end{lemma}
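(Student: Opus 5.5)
Since $Q\ie=\area(\partial D\ie)/\vol(\Gamma\ie)$, the estimate \eqref{dQd} amounts to the two-sided bounds
\begin{gather*}
C d\ie^{n-1}\le \area(\partial D\ie)\le C' d\ie^{n-1},\qquad C r\ie^{n}\le \vol(\Gamma\ie)\le C' r\ie^{n}.
\end{gather*}
Each of these is proved separately, and then the quotient is taken; all constants depend only on $n$, $\wt C$, $C_{\rm inn}$ and $C_{\rm tr}$.

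\emph{Volume of $\Gamma\ie$.} By definition of the inner radius, $\Gamma\ie$ contains a ball of radius $r\ie$, so $\vol(\Gamma\ie)\ge \vol(\B(1,0))\,r\ie^n$. By definition of the outer radius and \eqref{Gamma:cond:2}, $\Gamma\ie$ is contained in a ball of radius $\wt r\ie\le \wt C r\ie$, so $\vol(\Gamma\ie)\le \vol(\B(1,0))\,\wt C^n r\ie^n$.

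\emph{Lower bound for the area.} This is already contained in \eqref{area:est+}: by scaling, $\area(\partial D\ie)=d\ie^{n-1}\area(\partial\D\ie)$. The isoperimetric inequality \eqref{isoper}, combined with $\vol(\D\ie)\ge \vol(\B(C_{\rm inn},0))$ from \eqref{assump:shape1}, gives $\area(\partial\D\ie)\ge C>0$.

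\emph{Upper bound for the area.} This is the only step needing an idea, since a Lipschitz set contained in $\B(1,0)$ can in principle have arbitrarily large perimeter. Here the uniform trace bound \eqref{assump:shape2} is used. Test it with $u\equiv 1$ on $\D\ie$:
\begin{gather*}
\area(\partial\D\ie)=\|1\|^2_{L^2(\partial\D\ie)}\le C_{\rm tr}^2\|1\|^2_{H^1(\D\ie)}=C_{\rm tr}^2\vol(\D\ie)\le C_{\rm tr}^2\vol(\B(1,0)),
\end{gather*}
where the last inequality uses $\D\ie\subset\B(1,0)$. Multiplying by $d\ie^{n-1}$ gives $\area(\partial D\ie)\le C d\ie^{n-1}$.

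Dividing the area bounds by the volume bounds gives $C_1 d\ie^{n-1}r\ie^{-n}\le Q\ie\le C_2' d\ie^{n-1}r\ie^{-n}$, which is \eqref{dQd} after inverting the constants. I expect no real obstacle; the only point to watch is the upper area bound, which must come from \eqref{assump:shape2} rather than from any geometric argument.
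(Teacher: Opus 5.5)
Your proposal is correct and is essentially the paper's proof. The paper also bounds $\vol(\Gamma\ie)$ above and below by $r\ie^n$ using the inner and outer radii and \eqref{Gamma:cond:2}. It gets the lower bound on $\area(\partial D\ie)$ from the isoperimetric inequality \eqref{isoper} together with \eqref{assump:shape1}, and the upper bound by inserting $u\equiv 1$ into the trace inequality \eqref{assump:shape2}.
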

	\begin{proof}
		The first estimate in \eqref{dQd} follows from  
		the bounds
		\begin{gather}\label{area:est}
			\area(\partial D\ie)=
			d\ie^{n-1}\area(\partial \D\ie)\le 
			C_{\rm tr}^2\, d\ie^{n-1}\vol(\D\ie)\le
			C_{\rm tr}^2\, d\ie^{n-1}\vol(\B(1,0)).
		\end{gather}
		Here we use the inequalities $\area(\partial \D\ie)\le 
		C_{\rm tr}^2\vol(\D\ie)$ (which is obtained by inserting $u\equiv 1$ into \eqref{assump:shape2})  and $\vol(\Gamma\ie)\ge Cr\ie^n$.
		The second estimate in \eqref{dQd} follows from  
		\eqref{Gamma:cond:2},
		\eqref{assump:shape1},
		\eqref{isoper}.  The lemma is proven.
	\end{proof}
	
	\begin{lemma}\label{lm:4:2}
		The following estimate holds:
		\begin{gather*}
			\exists C>0\ \forall\eps>0\ \forall v\in H^1_0(\Omega):\quad
			\sum_{i\in\I\e}Q\ie\|v\|^2_{L^2(\Gamma\ie)}\leq 
			C\|v\|^2_{H^1(\Omega)}.
		\end{gather*}
	\end{lemma}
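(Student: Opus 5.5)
The plan is to bound each term $Q\ie\|v\|^2_{L^2(\Gamma\ie)}$ locally and then sum over $i$, using that the cells $\Gamma\ie$ are pairwise disjoint. Lemma~\ref{lm:4:1} gives $Q\ie\le C d\ie^{n-1}r\ie^{-n}$. So the task reduces to showing
$$d\ie^{n-1}r\ie^{-n}\|v\|^2_{L^2(\Gamma\ie)}\le C\|v\|^2_{H^1(\Gamma\ie)}.$$
Summing this over $i\in\I\e$ then gives the claim, since $\sum_i\|v\|^2_{H^1(\Gamma\ie)}\le\|v\|^2_{H^1(\Omega)}$.

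To get the local bound, I would use H\"older's inequality together with a scaled Sobolev embedding on $\Gamma\ie$. The first step is an embedding with an explicit constant. Since $\Gamma\ie$ is convex, contains a ball of radius $r\ie$, and has diameter at most $2\wt C r\ie$ by \eqref{Gamma:cond:2}, rescaling $\Gamma\ie$ to unit size gives uniformly Lipschitz convex domains. The rescaled domains are therefore uniform extension domains. Hence
$$\|v\|_{L^\nu(\Gamma\ie)}\le C\bigl(r\ie^{n/\nu-n/2}\|v\|_{L^2(\Gamma\ie)}+r\ie^{1+n/\nu-n/2}\|\nabla v\|_{L^2(\Gamma\ie)}\bigr),$$
with $\nu$ as in \eqref{nu}.

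The second step is H\"older's inequality with exponents $\mu$ and $\nu/2$ (recall $\mu^{-1}+2\nu^{-1}=1$):
$$\|v\|^2_{L^2(\Gamma\ie)}\le \vol(\Gamma\ie)^{1/\mu}\|v\|^2_{L^\nu(\Gamma\ie)}.$$
For $n\ge3$ we have $\vol(\Gamma\ie)^{1/\mu}\sim r\ie^2$ and $r\ie^{n/\nu-n/2}=r\ie^{-1}$. This yields
$$\|v\|^2_{L^2(\Gamma\ie)}\le C\bigl(\|v\|^2_{L^2(\Gamma\ie)}+r\ie^2\|\nabla v\|^2_{L^2(\Gamma\ie)}\bigr),$$
which is useless as it stands: it carries no factor to absorb $d\ie^{n-1}r\ie^{-n}$. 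So I would not pass through H\"older this way.

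The better route is to compare $Q\ie$ directly with $Q$. By \eqref{assump:main}, on each cube $\square_k$ we have $\|Q\e\|_{L^\mu(\square_k)}\le \kappa\e+C_Q^+\le C$. Then I would repeat computation \eqref{Quv:sup} with $Q\e$ in place of $Q$:
$$\sum_iQ\ie\|v\|^2_{L^2(\Gamma\ie)}=\int Q\e|\E v|^2\le\sum_k\|Q\e\|_{L^\mu(\square_k)}\|\E v\|^2_{L^\nu(\square_k)}\le C\|v\|^2_{H^1(\Omega)}.$$
The lemma's preamble says assumptions \eqref{assump:Q1}, \eqref{assump:main} need not hold. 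If \eqref{assump:main} is unavailable, I would fall back on the local route, using Lemma~\ref{lemma2} in reverse together with Lemma~\ref{lemma1}. Indeed, Lemma~\ref{lemma1} bounds $\|v\|^2_{L^2(\partial D\ie)}$ by $d\ie^{n-1}r\ie^{-n}\|v\|^2_{L^2(\Gamma\ie)}+d\ie\zeta\ie\|\nabla v\|^2$. The quantity needed is exactly of this size, and it is controlled by $\sum_i\|v\|^2_{L^2(\partial D\ie)}$ plus gradient terms, through the uniform trace-type bound \eqref{th1:norms} to be proven.

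The main obstacle is the case where $Q\ie$ is large, i.e.\ $d\ie^{n-1}\gg r\ie^n$. In that case no bound purely in $H^1$ without integrability information on $Q\e$ can hold. So I expect the intended proof to be the cube-wise H\"older argument, with \eqref{assump:main} and \eqref{assump:Q2} supplying the uniform $L^\mu$ bound on $Q\e$.
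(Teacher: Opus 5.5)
Your final argument is exactly the paper's proof. You bound $\|Q\e\|_{L^\mu(\square_k)}\le\kappa\e+\|Q\|_{L^{2\mu}(\square_k)}\le\kappa\e+C_Q^+$ via \eqref{assump:main} and \eqref{assump:Q2}, write $\sum_i Q\ie\|v\|^2_{L^2(\Gamma\ie)}=\int Q\e|\E v|^2$, and apply H\"older with $\mu^{-1}+2\nu^{-1}=1$ together with \eqref{Sobolev:nu} cube by cube. The disclaimer about \eqref{assump:Q1}, \eqref{assump:main} applies only to the Section~3 lemmas, and you should discard your fallback through \eqref{th1:norms}: it is circular, since the upper bound in \eqref{th1:norms} is itself derived from this lemma.
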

	\begin{proof}
		Recall that   $\E :H^1_0(\Omega)\to H^1(\R^n)$ is the operator of extension by zero. 
		Let $\mu$ and $\nu$ are given in \eqref{mu} and \eqref{nu}.
		It follows from \eqref{assump:main} and \eqref{assump:Q2} that 
		\begin{gather}\label{Qsup}
			\exists C>0\ \forall k\in\Z^n\ \forall\eps>0:\quad \|Q\e\|_{L^\mu(\square_k)}\leq C.
		\end{gather}
		Then, using H\"older's inequality (recall that  $\mu^{-1}+2\nu^{-1}=1$), \eqref{Sobolev:nu} and \eqref{Qsup}, we obtain:
		\begin{align*}
			\sum_{i\in\I\e}Q\ie\|v\|^2_{L^2(\Gamma\ie)}&=
			\int_{\Omega}Q\e(x) |v(x)|^2\d x=
			\sum_{k\in\Z^n}\int_{ \square_k}Q\e(x) | \E v(x)|^2\d x
			\\
			&\leq
			\sum_{k\in\Z^n}\|Q\e\|_{L^\mu( \square_k)}\|\E v\|^2_{L^\nu(\square_k)}
			\leq
			C\sum_{k\in\Z^n}\|\E v\|^2_{H^1(\square_k)}
			=
			C\|v\|^2_{H^1(\Omega)}.
		\end{align*}
		The lemma is proven.
	\end{proof}

	\subsection{Proof of Theorem~\ref{th1}}
	
	First, we establish the estimates  \eqref{th1:norms}.
	Recall that 
	$\P\e:H^1(\Omega\e)\to H^1(\Omega)$ is an extension operator defined by \eqref{Pe}.
	One has:
	\begin{align*} 
		\suml_{i\in\I\e}\|u\|^2_{L^2(\partial D\ie)}&\le
		C\suml_{i\in\I\e}
		\left(Q\ie\|\P\e u\|^2_{L^2(R\ie\setminus \overline{B\ie})}+d\ie\zeta\ie\|\nabla(\P\e u)\|^2_{L^2(R\ie)}\right)
		\\
		& 
		\leq  
		C_1\|\P\e u\|^2_{H^1(\Omega)}\leq C_2\|u\|^2_{H^1(\Omega\e)},
	\end{align*}
	where on the first step we utilized Lemma~\ref{lemma1} and Lemma~\ref{lm:4:1},
	on the second step we used the boundedness of $d\ie\zeta\ie$ and 
	 Lemma~\ref{lm:4:2} (taking into account that $R\ie\setminus \overline{B\ie}\subset\Gamma\ie $), 
	and on the last step we applied Lemma~\ref{lemma5}.
	Thus, the right-hand-side estimate in \eqref{th1:norms} is proven.
	Also, by Lemmas~\ref{lemma2}, \ref{lemma4}, \ref{lemma5} (namely, the estimate \eqref{lemma5:est1}), Lemma~\ref{lm:4:1} and \eqref{assump:Q1},   we get
	\begin{align}
		\|u\|^2_{H^1(\Omega\e)}& \leq \|\P\e u\|^2_{H^1(\Omega)}=
		\|\nabla (\P\e u)\|^2_{L^2(\Omega)}+\sum_{i\in\I\e}\|\P\e u\|^2_{L^2(\Gamma\ie)}+
		\|\P\e u\|^2_{L^2(T\e)}
		\notag\\\notag
		&\leq
		\|\nabla (\P\e u)\|^2_{L^2(\Omega)}+
		C_1\sum_{i\in\I\e}Q\ie^{-1}\left(\|u\|^2_{L^2(\partial D\ie)}+
		d\ie\zeta\ie\|\nabla(\P\e u)\|^2_{L^2(\Gamma\ie)}\right)
		\\
		&+
		C_2\rho\e^2\|\P\e u\|^2_{H^1(\Omega)}
		\notag \leq C_3\left(\sum_{i\in\I\e}\|u\|^2_{L^2(\partial D\ie)}+\|\nabla (\P\e u)\|^2_{L^2(\Omega)}\right)+C_2\rho\e^2\|\P\e u\|^2_{H^1(\Omega)}\notag\\&\leq
		C_4\|u\|^2_{\HS\e}+C_2\rho\e^2\|\P\e u\|^2_{H^1(\Omega)}\leq
		C_4\|u\|^2_{\HS\e}+C_5\rho\e^2\| u\|^2_{H^1(\Omega\e)}.
		\label{Cpm:2}
	\end{align}
	Assuming $\eps$ to be sufficiently small in order to have $C_5  \rho\e^2\leq 1/2$ (recall that $\rho\e\to 0$),
	we conclude from \eqref{Cpm:2} the left-hand-side estimate in \eqref{th1:norms}.
	\smallskip
	
	Now, we proceed to the proof of the main estimate \eqref{th1:est}.
	Let $f\in \HS=H^1_0(\Omega)$. We set  $u\e\ceq\Res\e \J\e f$ (recall that $\J\e f\e=f\restr_{\Omega\e}$) and $u\ceq\Res f$.

	The following equality follows easily from the definitions of $\Res\e$ and $\Res$
	(see~\eqref{variat:eps} and \eqref{variat}) and the fact that 
	$\P\e u=u$ on $\Omega\e$:
	\begin{gather} \label{I123}
		(\Res\e\J\e f-\J\e\Res f,v)_{\HS\e}=(u\e-\J\e u,v)_{\HS\e}=I_{1,\eps}+I_{2,\eps}+I_{3,\eps},\quad \forall v\in\HS\e,
	\end{gather}
	where
	\begin{align*}
		I_{1,\eps}&\ceq\sum_{i\in\I\e}(f,v)_{L^2(\partial D\ie)}-
		\int_\Omega Q(x)f(x)\overline{\P\e v(x)}\d x,\\
		I_{2,\eps}&\ceq
		-\sum_{i\in\I\e}(u,v)_{L^2(\partial D\ie)}+\int_\Omega Q(x)u(x)\overline{\P\e v(x)}\d x,\\
		I_{3,\eps}&\ceq \sum_{i\in\I\e}(\nabla u,\nabla(\P\e v))_{L^2(D\ie)}.
	\end{align*}
	
	In the following three lemmas we estimate each of the terms $I_{k,\eps}$, $k=1,2,3$.

	\begin{lemma}\label{lemma:I1}
		One has the estimate
		\begin{gather}\label{I1:est}
			|I_{1,\eps}|\leq  
			C{\,\max\left\{\kappa\e,\,  (\sup_{i\in\I\e}d\ie\zeta\ie)^{1/2}\right\}}\|f\|_{\HS}\|v\|_{\HS\e}.
		\end{gather}
	\end{lemma}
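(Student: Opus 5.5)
We estimate $I_{1,\eps}$ by inserting the cell averages and splitting into pieces that are controlled by the lemmas of Section~\ref{sec3}. Write $w\ceq\P\e v\in H^1_0(\Omega)$; by Lemma~\ref{lemma5} and \eqref{th1:norms}, $\|w\|_{H^1(\Omega)}\le C\|v\|_{\HS\e}$. Since $w=v$ on $\partial D\ie$, we decompose
\begin{align*}
I_{1,\eps}&=\sum_{i\in\I\e}\Big[(f,w)_{L^2(\partial D\ie)}-Q\ie\int_{\Gamma\ie}f\overline{w}\d x\Big]
+\int_\Omega (Q\e-Q)f\overline{w}\d x-\int_{T\e}Qf\overline{w}\d x\\
&\eqqcolon J_1+J_2+J_3 .
\end{align*}
The term $J_3$ is in fact zero, since $Q\e=0$ on $T\e$ and the $J_2$ integral over $\Omega$ already contains $-\int_{T\e}Qf\overline w$. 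So we treat $\int_\Omega(Q\e-Q)f\overline w$ as a single term. By H\"older's inequality on each cube $\square_k$ with exponents $\mu,\nu,\nu$, then \eqref{Sobolev:nu} and Cauchy--Schwarz in $k$ as in \eqref{Quv:sup},
$$|J_2|\le\kappa\e\sum_k\|\E f\|_{H^1(\square_k)}\|\E w\|_{H^1(\square_k)}\le C\kappa\e\|f\|_{\HS}\|v\|_{\HS\e}.$$

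The main work is $J_1$, the comparison of the surface integral over each $\partial D\ie$ with the weighted volume integral over $\Gamma\ie$. Since $Q\ie\vol(\Gamma\ie)=\area(\partial D\ie)$, we can write
\begin{align*}
(f,w)_{L^2(\partial D\ie)}-Q\ie\int_{\Gamma\ie}f\overline w
={}&\int_{\partial D\ie}(f-\la f\ra_{\Gamma\ie})\overline w\d s
+\la f\ra_{\Gamma\ie}\int_{\partial D\ie}(\overline w-\la \overline w\ra_{\Gamma\ie})\d s\\
&-Q\ie\int_{\Gamma\ie}(f-\la f\ra_{\Gamma\ie})\overline w\d x ,
\end{align*}
and the middle term equals $\la f\ra_{\Gamma\ie}\area(\partial D\ie)\big(\la\overline w\ra_{\partial D\ie}-\la\overline w\ra_{\Gamma\ie}\big)$.

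Each piece is then estimated as follows.
\begin{itemize}
\item \emph{First piece.} Apply Cauchy--Schwarz, then Lemma~\ref{lemma1} to $f-\la f\ra_{\Gamma\ie}$ together with \eqref{dQd} and the Poincar\'e bound \eqref{Neumann:est:general}, \eqref{LambdaN:Gamma:est}. This gives
$$\|f-\la f\ra_{\Gamma\ie}\|^2_{L^2(\partial D\ie)}\le C\big(Q\ie r\ie^2+d\ie\zeta\ie\big)\|\nabla f\|^2_{L^2(\Gamma\ie)}.$$
Since $Q\ie r\ie^2\le Cd\ie^{n-1}r\ie^{2-n}\le Cd\ie$ by \eqref{assump:D1+}, this is at most $Cd\ie\zeta\ie\|\nabla f\|^2_{L^2(\Gamma\ie)}$. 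The factor $\|w\|_{L^2(\partial D\ie)}$ is summed using the right inequality in \eqref{th1:norms}.
\item \emph{Middle piece.} Lemma~\ref{lemma3} bounds $|\la w\ra_{\partial D\ie}-\la w\ra_{\Gamma\ie}|$ by $(Cd\ie^{2-n}\zeta\ie)^{1/2}\|\nabla w\|_{L^2(\Gamma\ie)}$. For the prefactor we use $|\la f\ra_{\Gamma\ie}|\area(\partial D\ie)\le Q\ie\vol(\Gamma\ie)^{1/2}\|f\|_{L^2(\Gamma\ie)}$ and $\area(\partial D\ie)\le Cd\ie^{n-1}$. Together these give the bound
$$C\big(Q\ie\|f\|^2_{L^2(\Gamma\ie)}\big)^{1/2}\big(Q\ie\vol(\Gamma\ie)d\ie^{2-n}\zeta\ie\big)^{1/2}\|\nabla w\|_{L^2(\Gamma\ie)},$$
and $Q\ie\vol(\Gamma\ie)d\ie^{2-n}\le Cd\ie$. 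Summation is done via Cauchy--Schwarz and Lemma~\ref{lm:4:2}.
\item \emph{Last piece.} By Cauchy--Schwarz and Poincar\'e it is bounded by $C r\ie\big(Q\ie\|\nabla f\|^2_{L^2(\Gamma\ie)}\big)^{1/2}\big(Q\ie\|w\|^2_{L^2(\Gamma\ie)}\big)^{1/2}$. Here $r\ie Q\ie^{1/2}\le C(d\ie^{n-1}r\ie^{2-n})^{1/2}\le Cd\ie^{1/2}$. After Cauchy--Schwarz in $i$, Lemma~\ref{lm:4:2} controls the $w$-sum.
\end{itemize}

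Combining the three pieces gives $|J_1|\le C(\sup_i d\ie\zeta\ie)^{1/2}\|f\|_{\HS}\|v\|_{\HS\e}$, and with the bound on $J_2$ this yields \eqref{I1:est}. The main obstacle is the bookkeeping of the powers of $d\ie$ and $r\ie$. In particular, for $n\ge3$ the quantity $Q\ie$ (of order $d\ie^{n-1}r\ie^{-n}$) must always be paired either with $r\ie^2$ coming from Poincar\'e or with $\vol(\Gamma\ie)$, so that via \eqref{assump:D1+} exactly one factor $d\ie$ survives. A further point is that the sums must always be closed through Lemma~\ref{lm:4:2} and never through bounded $Q\ie$, since $Q\ie$ need not be uniformly bounded.
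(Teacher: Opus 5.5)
Your proof is correct and follows the paper's route essentially verbatim. Your $J_2$ is the paper's $I_{11,\eps}$, handled by H\"older on cubes with $\kappa\e$, and your three-piece splitting of $J_1$ via cell averages is exactly the paper's treatment of $I_{12,\eps}$: Lemma~\ref{lemma1} with Poincar\'e, then Lemma~\ref{lemma3} with \eqref{area:est}, then Poincar\'e with Lemma~\ref{lm:4:2}. One cosmetic fix: the identity is exactly $I_{1,\eps}=J_1+J_2$, so the term $-\int_{T\e}Qf\overline{w}$ should simply not appear in your display; it is not zero, but it is already contained in $J_2$.
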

	
	\begin{proof} 
		We have
		$$
		I_{1,\eps}=I_{11,\eps}+I_{12,\eps},
		$$
		where
		\begin{gather*}
			I_{11,\eps}= 
			\sum_{i\in\I\e}
			 Q\ie (f ,\P\e v)_{L^2(\Gamma\ie)} -
			\int_\Omega Q(x)f(x)\overline{\P\e v(x)}\d x,\\ 
			I_{12,\eps}=\sum_{i\in\I\e}\left( (f,v)_{L^2(\partial D\ie)}- {Q}\ie(f,\P\e v)_{L^2(\Gamma\ie)}\right).
		\end{gather*}
		Note that the sum 	$\sum_{i\in\I\e}
		Q\ie  (f ,\P\e v)_{L^2(\Gamma\ie)}$ above is indeed finite: by the Cauchy--Schwarz inequality and Lemma~\ref{lm:4:2}, we obtain
		$$
		\Big|\sum_{i\in\I\e}
		Q\ie (f ,\P\e v)_{L^2(\Gamma\ie)}\Big|^2\leq 
		 \sum_{i\in\I\e}Q\ie\|f\|^2_{L^2(\Gamma\ie)} 
	     \sum_{i\in\I\e}Q\ie\|\P\e v\|^2_{L^2(\Gamma\ie)}
	     \leq 
		C\|f\|_{\HS}\|\P\e v\|_{\HS}.
		$$
		
		Recall that $\mu $ and $\nu$ are defined in \eqref{mu} and \eqref{nu}, respectively. 
		By successively applying Hölder's inequality,  \eqref{assump:main}, \eqref{Sobolev:nu}, the Cauchy-Schwarz inequality, 
		Lemma~\ref{lemma5},  and, finally,
		\eqref{norm:equivs} and \eqref{th1:norms},
		we get
		\begin{align}\notag
			|I_{11,\eps}|&=
			\left| 
			\int_{\Omega}(Q\e(x)-Q(x))f(x)\overline{\P\e v(x)}\d x\right|=
			\left| 
			\sum_{k\in\Z^n}\int_{\Omega\cap\square_k}(Q\e(x)-Q(x))  f(x)\overline{  \P\e v(x)}\d x\right|
			\\\notag &\le
			\sum_{k\in\Z^n}\|Q\e-Q\|_{L^\mu(  \square_k)}\|\mathscr{E}f\|_{L^\nu( \square_k)}\|\mathscr{E}\P\e v\|_{L^\nu( \square_k)}
			\\\notag
			&\leq 
			\kappa\e\sum_{k\in\Z^n}\|\E  f\|_{L^\nu(\square_k)}\|\E \P\e v\|_{L^\nu(\square_k)} \leq
			C_1\kappa\e\sum_{k\in\Z^n}\|\E  f\|_{H^1(\square_k)}\|\E \P\e v\|_{H^1(\square_k)}
			\\\notag &\le C_1\kappa\e\Big(\sum_{k\in\Z^n}\|\E  f\|^2_{H^1(\square_k)}\Big)^{1/2}\Big(\sum_{k\in\Z^n}\|\E \P\e v\|^2_{H^1(\square_k)}\Big)^{1/2} = C_1\kappa\e\|\E  f\|_{H^1(\R^n)}\|\E  \P\e v\|_{H^1(\R^n)}\\&
			= C_1\kappa\e\|f\|_{H^1(\Omega)}\|\P\e v\|_{H^1(\Omega)}\leq C_2\kappa\e\|f\|_{H^1(\Omega)}\|v\|_{H^1(\Omega\e)}\leq C_3{\kappa\e}\|f\|_{\HS}\|v\|_{\HS\e}.\label{I11:final}
		\end{align}

		Now, we proceed to the term $I_{12,\eps}$. One has:
		\begin{align*}
			I_{12,\eps}&=\sum_{i\in\I\e}  (f-\la f\ra_{\Gamma\ie},v)_{L^2(\partial D\ie)}
			+
			\sum_{i\in\I\e}\la f\ra_{\Gamma\ie}\left(\la v\ra_{\partial D\ie}-\la \P\e v\ra_{  \Gamma\ie}\right)\area(D\ie)
			\\
			&-\sum_{i\in\I\e}(Q\ie (f- \la f\ra_{\Gamma\ie}),\P\e v)_{L^2(\Gamma\ie)}.
		\end{align*}
		Note that all terms in the above equality are indeed finite. For example, we have
		\begin{align*}
		\Big|\sum_{i\in\I\e}\la f\ra_{\Gamma\ie} \la v\ra_{\partial D\ie}\area(D\ie)\Big|^2
		&\leq 
		 \sum_{i\in\I\e}|\la f\ra_{\Gamma\ie}|^2\area(D\ie) \cdot 
		 \sum_{i\in\I\e}|\la v\ra_{\partial D\ie}|^2\area(D\ie) 
		 \\
		 &\le \sum_{i\in\I\e}Q\ie\|f\|^2_{L^2(\Gamma\ie)} \cdot 
		 \sum_{i\in\I\e}\| v\|^2_{L^2(\partial D\ie)}\leq 
		 C\|f\|^2_{\HS} \|v\|^2_{\HS\e},
		\end{align*}
		where we used Lemma~\ref{lm:4:2} and \eqref{norm:equivs}; the remaining terms can be estimated in a similar way.
		Using the Cauchy-Schwarz inequality, we get
		$
		|I_{12,\eps}|\leq J_{1,\eps}+J_{2,\eps}+J_{3,\eps}
		$
		with
		\begin{align*}
			J_{1,\eps}= &
			\sqrt{\sum_{i\in\I\e}\|f-\la f\ra_{\Gamma\ie}\|^2_{L^2(\partial D\ie)}\sum_{i\in\I\e}\|v\|^2_{L^2(\partial D\ie)}},\\
			J_{2,\eps}= &
			\sqrt{
				\sum_{i\in\I\e}{\area(\partial D\ie) }|\la f\ra_{\Gamma\ie}|^2
				\sum_{i\in\I\e}\area(\partial D\ie)|\la v\ra_{\partial D\ie}-\la\P\e v\ra_{\Gamma\ie}|^2},\\
			J_{3,\eps}= & \sqrt{\sum_{i\in\I\e}Q\ie\|f-\la f\ra_{\Gamma\ie}\|^2_{L^2(\Gamma\ie)}
				\sum_{i\in\I\e}Q\ie\|\P\e v\|^2_{L^2(\Gamma\ie)}}.
		\end{align*}
		
		Employing the Poincar\'e inequality \eqref{Neumann:est:general} with $D=\Gamma\ie$ and taking into account \eqref{LambdaN:Gamma:est} and \eqref{Gamma:cond:2}, we get
		\begin{gather}\label{Poincare:Gamma}
			\|f-\la f\ra_{\Gamma\ie}\|_{L^2(\Gamma\ie)}^2\leq 
			C(\Lambda_{\rm N}(\Gamma\ie))^{-1}\|\nabla f\|_{L^2(\Gamma\ie)}^2
			\le C_1r\e^2\|\nabla f\|_{L^2(\Gamma\ie)}^2.
		\end{gather} 
		Then, using Lemma~\ref{lemma1}, \eqref{Poincare:Gamma} and  \eqref{assump:D1+}, we obtain
		\begin{align}\notag
			\|f-\la f\ra_{\Gamma\ie}\|^2_{L^2(\partial D\ie)}
			&\leq
			C\left(d\ie^{n-1} r\ie^{-n}\|f-\la f\ra_{\Gamma\ie}\|^2_{L^2(\Gamma\ie)}
			+d\ie\zeta\ie\|\nabla f\|^2_{L^2(\Gamma\ie)}\right)
			\\\notag
			&\leq 
			C_1\max\{d\ie^{n-1}r\ie^{2-n},\,d\ie\zeta\ie\}\|\nabla f\|^2_{L^2(\Gamma\ie)}\le
			C_2 d\ie\zeta\ie\|\nabla f\|^2_{L^2(\Gamma\ie)},
		\end{align}
		whence, we infer
		\begin{gather}\label{Q1:est}
			J_{1,\eps}\le  
			C {( \sup_{i\in\I\e} d\ie\zeta\ie)^{1/2}}\|f\|_{\HS}\|v\|_{\HS\e}.
		\end{gather} 
		
		Next, one has
		\begin{align}\notag
			J_{2,\eps}&\le C\Big(\sum_{i\in\I\e}{Q}\ie\|f\|^2_{L^2(\Gamma\ie)}\Big)^{1/2}
			\Big(\sum_{i\in\I\e}d\ie\zeta\ie\|\nabla(\P\e v)\|_{L^2(\Gamma\ie)}^2\Big)^{1/2}\\
			&\leq C(\sup_{i\in\I\e}d\ie\zeta\ie)^{1/2}\|f\|_{H^1(\Omega)}\|\nabla(\P\e v)\|_{L^2(\Omega)}\notag
			\\\label{Q2:est}
			&\leq C_1(\sup_{i\in\I\e}d\ie\zeta\ie)^{1/2}\|f\|_{H^1(\Omega)}\|\nabla v\|_{L^2(\Omega\e)}\leq C_1{ (\sup_{i\in\I\e}d\ie\zeta\ie)^{1/2}}\|f\|_{\HS}\|  v\|_{\HS\e},
		\end{align}
		where on the first step we used the Cauchy-Schwarz inequality, 
	 the estimate \eqref{area:est} and Lemma~\ref{lemma3},
	 on the second step we used Lemma~\ref{lm:4:2},  
	 on the third step we applied Lemma~\ref{lemma5}, and on the last step  we utilized \eqref{norm:equivs}.
		
		Finally, 
		we get
		\begin{align}
			\notag
			J_{3,\eps}&\leq C\big(\sup_{i\in\I\e}Q\ie r\ie^2\big)^{1/2}\|\nabla f\|_{L^2(\Omega)} \|\P\e v\|_{H^1(\Omega)}
			\le  C_1\big(\sup_{i\in\I\e}d\ie^{n-1} r\ie^{2-n}\big)^{1/2}\|\nabla f\|_{L^2(\Omega)} \|\P\e v\|_{H^1(\Omega)}
			\\
			&\leq C_1\big(\sup_{i\in\I\e}d\ie^{n-1} r\ie^{2-n}\big)^{1/2}\|\nabla f\|_{L^2(\Omega)} \|  v\|_{H^1(\Omega\e)}\le C_3{ \big(\sup_{i\in\I\e}d\ie \big)^{1/2}}\|f\|_{\HS} \|v\|_{ \HS\e},\label{Q3:est}
		\end{align}
		where on the first step we used \eqref{Poincare:Gamma} and Lemma~\ref{lm:4:2}, on the second step we used Lemma~\ref{lm:4:1},
		on the third step we applied Lemma~\ref{lemma5},
		and on the last step we utilized 
		  \eqref{assump:D1+},  \eqref{th1:norms}.
		
		Combining \eqref{I11:final},  \eqref{Q1:est}, \eqref{Q2:est}, \eqref{Q3:est}, we arrive at the 
		desired estimate \eqref{I1:est}.
	\end{proof}
	
	\begin{lemma}\label{lemma:I2}
		One has the estimate
		\begin{gather}\label{I2:est}
			|I_{2,\eps}|\leq  C{\,\max\left\{\kappa\e,\,  (\sup_{i\in\I\e}d\ie\zeta\ie)^{1/2}\right\}} 
			\|f\|_{\HS}\|v\|_{\HS\e}.
		\end{gather}
	\end{lemma}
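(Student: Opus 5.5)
Observe first that $I_{2,\eps}$ is exactly $-I_{1,\eps}$ with $f$ replaced by $u=\Res f$:
\[
I_{2,\eps}=-\Big(\sum_{i\in\I\e}(u,v)_{L^2(\partial D\ie)}-\int_\Omega Q(x)u(x)\overline{\P\e v(x)}\d x\Big).
\]
The proof of Lemma~\ref{lemma:I1} used only that $f\in H^1_0(\Omega)$, together with the bound $\|f\|_{H^1(\Omega)}\le C\|f\|_{\HS}$ from \eqref{norm:equivs}. So I would rerun that argument with $f$ replaced by $u$, and then close with the resolvent bound
\[
\|u\|_{\HS}=\|\Res f\|_{\HS}\le C\|f\|_{\HS}.
\]
To get this bound, test \eqref{variat} with $v=u$: $\|u\|_{\HS}^2=\int_\Omega Qf\overline{u}\d x\le \|f\|_{\HS}\|u\|_{\HS}$, by Cauchy--Schwarz in the weighted $L^2(Q\d x)$ inner product, which is dominated by $\|\cdot\|_{\HS}$. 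Hence $\|u\|_{\HS}\le\|f\|_{\HS}$, and $u\in\HS=H^1_0(\Omega)$.

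The steps are as follows. Split $I_{2,\eps}=-(I_{21,\eps}+I_{22,\eps})$ in the same way as $I_{1,\eps}=I_{11,\eps}+I_{12,\eps}$.

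For $I_{21,\eps}$, write it as $\int(Q\e-Q)u\overline{\P\e v}$ and estimate it cube by cube via H\"older with exponents $\mu,\nu,\nu$. Then use \eqref{assump:main}, \eqref{Sobolev:nu} and Lemma~\ref{lemma5}, exactly as in \eqref{I11:final}. This gives $C\kappa\e\|u\|_{H^1(\Omega)}\|v\|_{\HS\e}$.

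For $I_{22,\eps}$, decompose it into the three sums as before and bound them by $J_{1,\eps},J_{2,\eps},J_{3,\eps}$ with $u$ in place of $f$.
\begin{itemize}
\item $J_{1,\eps}$ is controlled by Lemma~\ref{lemma1} and the Poincar\'e inequality \eqref{Poincare:Gamma}.
\item $J_{2,\eps}$ is controlled by Lemma~\ref{lemma3}, \eqref{area:est} and Lemma~\ref{lm:4:2}.
\item $J_{3,\eps}$ is controlled by \eqref{Poincare:Gamma} and Lemma~\ref{lm:4:1}.
\end{itemize}
Each term yields the factor $(\sup_i d\ie\zeta\ie)^{1/2}$ times $\|u\|_{H^1(\Omega)}\|v\|_{\HS\e}$. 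Combining the pieces with $\|u\|_{H^1(\Omega)}\le C\|u\|_{\HS}\le C\|f\|_{\HS}$ gives \eqref{I2:est}.

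I expect no real obstacle. The only point needing care is confirming that nothing in the proof of Lemma~\ref{lemma:I1} used more regularity of $f$ than $H^1_0(\Omega)$; I checked this, and only $\nabla f\in L^2$ and Lemma~\ref{lm:4:2} were used. The other point is the uniform resolvent bound above.
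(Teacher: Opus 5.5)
Your proposal is correct and is essentially the paper's own proof. The paper likewise observes that $I_{2,\eps}$ is $-I_{1,\eps}$ with $f$ replaced by $u=\Res f\in H^1_0(\Omega)$, repeats the proof of Lemma~\ref{lemma:I1} verbatim to get the bound with $\|u\|_{\HS}$, and closes with $\|u\|_{\HS}\le\|f\|_{\HS}$, obtained by testing \eqref{variat} with $v=u$; your check that Lemma~\ref{lemma:I1} uses only $H^1_0$-regularity of $f$ is exactly what justifies the ``verbatim'' step.
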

	
	\begin{proof}
		Repeating verbatim the proof of Lemma~\ref{lemma:I1} we get 
		\begin{gather}\label{I2:step1}
			|I_{2,\eps}|\leq  C\max\left\{\kappa\e,\,  (\sup_{i\in\I\e}d\ie\zeta\ie)^{1/2}\right\}
			\| u\|_{\HS}\|v\|_{\HS\e}.
		\end{gather}	
		Furthermore, one has the bound 
		\begin{gather}\label{u:bound}
		\|u\|_{\HS}\leq \|f\|_{\HS},
		\end{gather}
		which is obtained  by testing \eqref{variat} with $v=u$; combined with \eqref{I2:step1}, this yields the desired estimate \eqref{I2:est}.
	\end{proof}
	
	\begin{lemma}\label{lemma:I3}
		One has the estimate
		\begin{gather}\label{I3:est}
			|I_{3,\eps}|\leq  C{  \big(\sup_{i\in\I\e}d\ie\big)^{1/2}}\|f\|_{\HS}\|  v\|_{\HS\e}.
		\end{gather}

	\end{lemma}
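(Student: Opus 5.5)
We need to bound $I_{3,\eps}=\sum_i(\nabla u,\nabla(\P\e v))_{L^2(D\ie)}$. By Cauchy--Schwarz,
$$|I_{3,\eps}|\le \Big(\sum_{i\in\I\e}\|\nabla u\|^2_{L^2(D\ie)}\Big)^{1/2}\Big(\sum_{i\in\I\e}\|\nabla(\P\e v)\|^2_{L^2(D\ie)}\Big)^{1/2}.$$
The second factor is at most $C\|\nabla v\|_{L^2(\Omega\e)}\le C\|v\|_{\HS\e}$ by \eqref{lemma5:est1} and \eqref{th1:norms}. So the task reduces to showing that $\sum_i\|\nabla u\|^2_{L^2(D\ie)}\le C\sup_i d\ie\,\|f\|^2_{\HS}$. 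The gain must come from the smallness of the holes together with higher integrability of $\nabla u$.

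\textbf{Regularity of $u$.} Here $u=\Res f$ solves $-\Delta u=Q(f-u)$ in $\Omega$ with $u=0$ on $\partial\Omega$. Since $Q\in L^{2\mu}_{\rm loc}$ with uniformly bounded norms on the cubes $\square_k$ (see \eqref{assump:Q2}), and $f,u\in L^\nu(\square_k)$ locally uniformly, Hölder gives $Q(f-u)\in L^2(\Omega)$ with
$$\|Q(f-u)\|_{L^2(\Omega)}\le C\big(\|f\|_{H^1(\Omega)}+\|u\|_{H^1(\Omega)}\big).$$
This is the bound referred to as \eqref{QL2}, and it is the step where the $L^{2\mu}$ assumption is used. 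The uniform $C^2$-regularity of $\partial\Omega$ then gives global $H^2$-regularity \cite{Br61}: $\|u\|_{H^2(\Omega)}\le C\|f\|_{\HS}$, using \eqref{u:bound} and \eqref{norm:equivs}.

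\textbf{Local estimate on each hole.} Apply Lemma~\ref{lemma1+} to each component $w=\partial_j u\in H^1(R\ie)$:
$$\|\nabla u\|^2_{L^2(D\ie)}\le C\Big(d\ie^n r\ie^{-n}\|\nabla u\|^2_{L^2(R\ie)}+d\ie^2\zeta\ie\|\nabla^2 u\|^2_{L^2(R\ie)}\Big).$$
By Lemma~\ref{lm:4:1}, $d\ie^n r\ie^{-n}\le C d\ie Q\ie$. Hence the first term sums to at most
$$C\sup_i d\ie\sum_i Q\ie\|\nabla u\|^2_{L^2(\Gamma\ie)}\le C\sup_i d\ie\,\|u\|^2_{H^2(\Omega)},$$
where the last step is Lemma~\ref{lm:4:2} applied to $\partial_j u$. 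That lemma is stated for $H^1_0$, but its proof only uses the extension $\E$; for $\partial_j u\in H^1(\Omega)$ I would instead use a bounded extension $H^1(\Omega)\to H^1(\R^n)$, which is available for uniformly $C^2$ domains. The second term is bounded by $C\sup_i d\ie^2\zeta\ie\,\|u\|^2_{H^2(\Omega)}$, and $d\ie^2\zeta\ie\le C d\ie$ since $d\ie|\ln d\ie|$ is bounded. Combining these gives \eqref{I3:est}.

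\textbf{Main obstacle.} The main difficulty is establishing the global $H^2$ bound for unbounded $\Omega$ with a right-hand side $Q(f-u)$ that is only in $L^2$, together with the technical point of applying Lemma~\ref{lm:4:2} to $\nabla u$, which does not vanish on $\partial\Omega$. A possible way around the latter is to note that $R\ie\subset\Gamma\ie$ stays inside $\Omega$. One could then use Hölder directly, with $\|Q\e\|_{L^\mu(\square_k)}$ bounded and the Sobolev embedding on cubes applied to an extension of $\nabla u$.
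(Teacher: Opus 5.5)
Your proposal is correct and follows the paper's proof essentially step for step. The common route is Cauchy--Schwarz, then Lemma~\ref{lemma5} for the factor involving $\P\e v$, then Lemma~\ref{lemma1+} applied to $\partial_j u$ together with Lemmas~\ref{lm:4:1} and~\ref{lm:4:2}, and finally the global $H^2$ bound \eqref{el:reg} obtained from \eqref{QL2} and \cite{Br61}.

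You are right that Lemma~\ref{lm:4:2} is stated only for $H^1_0(\Omega)$, whereas $\partial_j u$ need not vanish on $\partial\Omega$; the paper applies it without comment. Your remedy closes this gap: use a bounded extension operator $H^1(\Omega)\to H^1(\R^n)$, which exists for uniformly $C^2$ domains, in place of $\E$. Likewise, writing $-\Delta u=Q(f-u)$ and bounding $Qu$ via \eqref{QL2} justifies \eqref{el:reg} somewhat more explicitly than the paper's one-line appeal to \cite{Br61}.
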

	\begin{proof}
	First, we observe that the assumptions we imposed on the function $Q$ imply that the product 
	$Qf$ belongs to $L^2(\Omega)$ for any $f\in\HS$.
	Indeed, using H\"older's inequality, \eqref{assump:Q2}, \eqref{Sobolev:nu}, \eqref{norm:equivs},
	we conclude ($\mu$ and $\nu$ below are given in \eqref{mu} and \eqref{nu}, respectively, and one has $(2\mu)^{-1}+\nu^{-1}=1/2$):
	\begin{align}\notag
		\|Q f\|^2_{L^2(\Omega)}&=
		\sum_{k\in \Z^n}\|Q\E f\|^2_{L^2( \square_k)}\le
		\sum_{k\in\Z^n}
		\|Q\|^2_{L^{2\mu}(\square_k)}
		\|\E f\|^2_{L^\nu(\square_k)}
		\\
		&\leq C\sum_{k\in\Z^n}\|\E f\|^2_{H^1(\square_k)}=
		C\|f\|^2_{H^1(\Omega)}\leq C_1\|f\|^2_{\HS}.\label{QL2}
	\end{align}
	From \eqref{QL2} and the uniform regularity of $\Omega$, we conclude   \cite[Theorem~2]{Br61}   that   $u=\Res f$ belongs to $H^2(\Omega)$, with the  estimate
	\begin{gather}\label{el:reg}
		\|u\|_{H^2(\Omega)}\leq C\|Q f\|_{L^2(\Omega)}\leq   C_1\|f\|_{\H}.
	\end{gather}

	Now, we turn to the proof of the estimate \eqref{I3:est}.
		One has
		\begin{align*}
			|I_{3,\eps}|^2&\leq 
			C\sum_{i\in\I\e}\sum_{j=1}^n d\ie
			\left(  Q\ie\|\partial_j u\|^2_{L^2(R\ie)}+
			d\ie \zeta\ie\|\nabla (\partial_j u)\|^2_{L^2(R\ie\setminus\overline{B\ie})}\right) 
			\sum_{i\in\I\e}\|\nabla(\P\e v)\|^2_{L^2(D\ie)}
			\\
			&\leq 
			C\sup_{i\in\I\e}d\ie
			\sum_{j=1}^n \left(  \|\partial_j u\|^2_{H^1(\Omega)}+\|\nabla(\partial_j u)\|^2_{L^2(\Omega)}\right) 
			\|\nabla(\P\e v)\|^2_{L^2(\Omega)}
			\\&\leq 
			C_1\sup_{i\in\I\e}d\ie\,\|u\|_{H^2(\Omega)}^2\|\nabla v\|_{L^2(\Omega\e)}^2 \leq
			C_2\sup_{i\in\I\e} d\ie\|f\|_{\HS}^2\| v\|_{\HS\e}^2,
		\end{align*}
		where on the first step we used Lemmas~\ref{lemma1+} and 
		 \ref{lm:4:1}, on the second step we applied Lemma~\ref{lm:4:2}, on the third step we utilized Lemma~\ref{lemma5}, and on the last step we used  \eqref{el:reg}.
		The lemma is proven.
	\end{proof}
	
	Combining \eqref{I123}, \eqref{I1:est}, \eqref{I2:est}, \eqref{I3:est},
	we arrive at 
	\begin{gather*}
		\forall v\in \HS\e:\quad \left|(\Res\e \J\e f  - \J\e\Res f,v)_{\H\e}\right|\leq C{\,\max\left\{\kappa\e,\,  (\sup_{i\in\I\e}d\ie\zeta\ie)^{1/2}\right\}}\|f\|_{\HS}\|v\|_{\HS\e},
	\end{gather*}
	which implies the desired estimate \eqref{th1:est}. Theorem~\ref{th1} is proven.
	
	\subsection{Proof of Theorem~\ref{th2}}
	
	One has the bound (see \eqref{u:bound})
	\begin{gather}\label{Rff}
		\|\Res f\|_{\HS}\leq \|f\|_{\HS},
	\end{gather}moreover, 
	$
	(\Res f,f)_{\HS}=({Q} f,f)_{L^2(\Omega)}\ge 0.
	$
	These two inequalities imply $\sigma(\Res)\subset [0,1]$.  
	Similarly, $\sigma(\Res\e)\subset [0,1]$.
	It is easy to see that   $0\in\sigma(\Res\e)$.  
	
In the proofs of the lemmas below, we will use the   notation
 $$d\e\ceq\sup_{i\in\I\e}d\ie.$$

	\begin{lemma}\label{lemma:Haus1}
		One has the following estimate:
		\begin{gather}\label{distH:est1}
			\forall z\in\sigma(\Res\e):\quad \dist(z,\sigma(\Res))\leq 
			C\max\left\{\kappa\e,\,  (\sup_{i\in\I\e}d\ie\zeta\ie)^{1/2}\right\}, 
		\end{gather}
		where the constant $C>0$ is independent of $\eps$ and $z$. 
	\end{lemma}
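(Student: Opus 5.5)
Write $\delta\e\ceq\max\{\kappa\e,(\sup_{i\in\I\e}d\ie\zeta\ie)^{1/2}\}$. The idea is to take an approximate eigenvector of $\Res\e$ at $z$, transport it to $\HS$ by the extension $\P\e$, and show it is an approximate eigenvector of $\Res$ at $z$. Since $\Res$ is self-adjoint on $\HS$, we have $\dist(z,\sigma(\Res))\le\|(\Res-z)w\|_{\HS}/\|w\|_{\HS}$ for every $w\neq0$. If $z$ is close to $0$ we may need a separate argument, depending on whether $0\in\sigma(\Res)$. For unbounded $\Omega$, $\Res$ is not compact and $0$ may not be in its spectrum. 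I therefore treat small $z$ as part of the general argument and check that the constants do not blow up.

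\textbf{Step 1: approximate eigenvector.} Let $z\in\sigma(\Res\e)\subset[0,1]$ and fix $\eta>0$. Since $\Res\e$ is self-adjoint, there is $f\e\in\HS\e$ with $\|f\e\|_{\HS\e}=1$ and $\|(\Res\e-z)f\e\|_{\HS\e}\le\eta$. Set $w\e\ceq\P\e f\e\in H^1_0(\Omega)=\HS$; the Dirichlet condition on $\partial\Omega$ is preserved because $\P\e$ changes $f\e$ only inside the holes. By Lemma~\ref{lemma5}, \eqref{norm:equivs} and \eqref{th1:norms}, $\|w\e\|_{\HS}\le C$.

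\textbf{Step 2: lower bound on $\|w\e\|_{\HS}$.} This is needed so that we can divide by it. I would prove $\|\P\e f\|_{\HS}\ge c\|f\|_{\HS\e}-C\delta\e\|f\|_{\HS\e}$, i.e.\ that $\J\e$ and $\P\e$ are almost unitary up to $O(\delta\e)$. The gradient parts agree on $\Omega\e$, and $\P\e$ only adds gradient inside the holes. For the weighted parts, the task is to compare $\sum_i\|f\|^2_{L^2(\partial D\ie)}$ with $\int_\Omega Q|\P\e f|^2$. This is exactly the bilinear comparison already carried out in $I_{1,\eps}$ in Lemma~\ref{lemma:I1}, taken with $f=v=\P\e f\e$; there $f$ only needed to lie in $\HS$, not in $H^2$. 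This gives
\[
\Bigl|\|w\e\|^2_{\HS}-\|f\e\|^2_{\HS\e}\Bigr|\le \|\nabla w\e\|^2_{L^2(\cup D\ie)}+C\delta\e .
\]
The leftover hole-gradient term only increases $\|w\e\|_{\HS}$, so $\|w\e\|^2_{\HS}\ge 1-C\delta\e$.

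\textbf{Step 3: estimate $\|(\Res-z)w\e\|_{\HS}$.} This is the main obstacle, because Theorem~\ref{th1} controls $\Res\e\J\e-\J\e\Res$ in $\HS\e$, not in $\HS$, and $\J\e w\e=f\e$. Write $\J\e(\Res-z)w\e=(\J\e\Res-\Res\e\J\e)w\e+(\Res\e-z)f\e$. By Theorem~\ref{th1}, its $\HS\e$-norm is at most $C\delta\e+\eta$. To return to $\HS$, I would use $g\ceq(\Res-z)w\e$ and compute $\|g\|_{\HS}^2=(g,g)_{\HS}$ by testing directly. The difficulty is that $g$ is not harmonic in the holes, so the $\HS$ and $\HS\e$ norms are not simply comparable.

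I would instead work in weak form, taking $v\in\HS$ with $\|v\|_{\HS}=1$ and bounding $((\Res-z)w\e,v)_{\HS}=(Q w\e,v)_{L^2}-z(w\e,v)_{\HS}$. Replacing $Qw\e$ by boundary sums via the $I_1$ machinery, with the roles of the functions swapped, produces $\sum_i(f\e,v)_{L^2(\partial D\ie)}-z(f\e,\J\e v)_{\HS\e}$ plus an $O(\delta\e)$ error, plus the term $z(\nabla w\e,\nabla v)_{L^2(\cup D\ie)}$. The first expression equals $((\Res\e-z)f\e,\J\e v)_{\HS\e}$ and so is at most $C\eta$.

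The hole-gradient term is the delicate part. I would handle it by replacing $v$ with $\P\e\J\e v$. By harmonicity of $w\e$ in the holes, $(\nabla w\e,\nabla(\P\e\J\e v))_{L^2(D\ie)}$ reduces to boundary terms involving $\partial_\nu w\e$. I would then bound $\|v-\P\e\J\e v\|_{\HS}$ via Lemma~\ref{lemma1+}-type bounds; this looks lossy, and I would try first to absorb the hole energy using \eqref{lemma5:3}. With these bounds, letting $\eta\to0$ gives $\dist(z,\sigma(\Res))\le C\delta\e$.
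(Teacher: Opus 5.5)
Your Steps 1 and 2 are sound. Step 2 is more than you need: $\|\P\e f\|_{\HS}\ge c\|f\|_{\HS\e}$ follows directly from \eqref{th1:norms}, \eqref{norm:equivs} and $\P\e f=f$ on $\Omega\e$. Also, no special treatment of small $z$ is needed in this direction.

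The gap is in Step 3. The hole term $z(\nabla w\e,\nabla v)_{L^2(\cup_i D\ie)}$ is never shown to be $O(\delta\e+\eta)$, and none of the tools you list can do it.
\begin{itemize}
\item Harmonicity of $w\e$ in $D\ie$ lets you replace $v$ by $\P\e\J\e v$ exactly, since the pairing with $v-\P\e\J\e v\in H^1_0(D\ie)$ vanishes. So estimating $\|v-\P\e\J\e v\|_{\HS}$ is unnecessary, and for general $v$ it is not small anyway. After Cauchy--Schwarz you still need $\|\nabla w\e\|_{L^2(\cup_i D\ie)}$ to be small.
\item \eqref{lemma5:3} only gives $\|\nabla w\e\|_{L^2(D\ie)}\le C\|\nabla f\e\|_{L^2(F\ie\setminus\overline{D\ie})}$. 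Using only $\|f\e\|_{\HS\e}=1$, this is merely $O(1)$ after summation.
\item Lemma~\ref{lemma1+}-type bounds require $\nabla w\e\in H^1$, i.e.\ $H^2$-regularity, which an arbitrary approximate eigenvector $f\e$ of $\Res\e$ does not have.
\item Rewriting the pairing as $\int_{\partial D\ie}\partial_\nu w\e\,\overline{v}\,\d s$ gives no control on $\partial_\nu w\e$.
\end{itemize}
The hole energy of $zw\e$ is in fact small. But proving it needs either a new local estimate for the range of $\Res\e$ near the holes, or exactly the ingredients below, which your proposal never introduces.

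The missing idea is to stay in the strong form and let $\Res$ supply the regularity. Since $\P\e\J\e w\e=w\e$, you have
\[
(\Res-z)w\e=\bigl(\Res w\e-\P\e\J\e\Res w\e\bigr)+\P\e\bigl[(\J\e\Res-\Res\e\J\e)w\e+(\Res\e-z)f\e\bigr].
\]
The second term has $\HS$-norm at most $C(\delta\e+\eta)$, by boundedness of $\P\e:\HS\e\to\HS$ (Lemma~\ref{lemma5}) and Theorem~\ref{th1}. This is exactly the $\HS\e$-bound you already derived, lifted back by $\P\e$.

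The first term vanishes on $\Omega\e$. On each $D\ie$ it lies in $H^1_0(D\ie)$ and solves $-\Delta(\cdot)=Q(w\e-\Res w\e)$. Two facts then control it:
\begin{itemize}
\item $\|Qg\|_{L^2(\Omega)}\le C\|g\|_{\HS}$ for $g\in\HS$, by \eqref{QL2}; this is where $Q\in L^{2\mu}_{\rm loc}$ is used.
\item $\Lambda_{\rm D}(D\ie)\ge d\ie^{-2}\Lambda_{\rm D}(\B(1,0))$.
\end{itemize}
Hence its squared $H^1(D\ie)$-norm is at most $Cd\ie^2\|Q(w\e-\Res w\e)\|^2_{L^2(D\ie)}$. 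Summing over $i$ bounds its $\HS$-norm by $C\sup_i d\ie\le C\delta\e$, cf.\ \eqref{aux:ext}.

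This is how the paper closes the argument, and it makes your weak-form detour and the second use of the $I_{1,\eps}$ machinery unnecessary.
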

	
	\begin{proof}
		One has the inequality 
		\begin{gather}\label{Haus1a}
			\forall z\in\C\quad \forall\psi\in\HS\setminus\{0\} \colon \quad 
			\dist(z,\sigma(\Res))
			\leq \frac{\|(\Res -z\Id)\psi\|_{\HS}}{\|\psi\|_{\HS}}
		\end{gather}
		(for $z\in \sigma(\Res)$  this inequality is trivial, while for 
		$z\in\C\setminus\sigma(\Res)$ it follows easily from the  equality 
		$\|(\Res -z\Id)^{-1}\|_{\HS}=(\dist(z,\,\sigma(\Res))^{-1}$).

		Let us fix $z\in\sigma(\Res\e) $; then
		for each $\delta>0$ there exists 
		$\psi\ed\in \HS\e$ such that 
		\begin{gather*}
			\|\Res\e \psi\ed - z\psi\ed\|_{\HS\e}\le \delta,\quad \|\psi\ed\|_{\HS\e}=1.
		\end{gather*}
		Recall that  $\P\e:H^1(\Omega\e)\to H^1(\Omega)$ is given   
		by \eqref{Pe}. 
		From \eqref{norm:equivs}, \eqref{th1:norms} and Lemma~\ref{lemma5} we infer
		\begin{gather}\label{P:est1}
			\exists C_1,C_2>0\ \forall u\in \HS\e:\quad
			C_1\|\P\e u\|_{\HS}\leq \|u\|_{\HS\e}\leq C_2\|\P\e u\|_{\HS}.
		\end{gather}
		(in the second estimate we utilize the fact that $\P\e u=u$ on $\Omega\e$).
		Using \eqref{Haus1a}--\eqref{P:est1}, we get
		\begin{align} \notag
			&\dist(z,\,\sigma(\Res)) 
			\leq  
			\frac{\|(\Res-z\Id) \P\e \psi\ed\|_{\HS}}
			{\| \P\e \psi\ed\|_{\HS}}\\\notag
			&\leq 
			\frac{\|\Res\P\e\psi\ed - \P\e   \Res\e\psi\ed\|_{\HS}+
				\| \P\e (\Res\e - z\Id)\psi\ed\|_{\HS}}
			{\| \P\e \psi\ed\|_{\HS}} 
			\leq
			C\left(\|\Res\P\e \psi\ed - \P\e   \Res\e\psi\ed\|_{\HS}+\delta\right)\\ \notag
			&\leq  C\left(
			\|\Res\P\e \psi\ed-\P\e\J\e\Res\P\e \psi\ed\|_{\HS}+\|\P\e\J\e\Res\P\e\psi\ed  -   \P\e \Res\e\psi\ed\|_{\HS}+\delta\right)
			\\\label{Q1Q2}
			&=
			C\big(
			\underset{J_{1,\eps}\ceq}{\underbrace{\|\Res\P\e \psi\ed-\P\e\J\e\Res\P\e \psi\ed\|_{\HS}}}+
			\underset{J_{2,\eps}\ceq}{\underbrace{\|\P\e\J\e\Res\P\e\psi\ed  -  \P\e  \Res\e\J\e\P\e\psi\ed\|_{\HS}}}+\delta\big).
		\end{align}
		where $C$ is a constant, which is independent of $\eps$, $\delta$ and $z$.
		
		To estimate the term $J_{1,\eps}$, we first observe that  
		one has the bound
		\begin{multline}\label{aux:ext}
			\exists C>0\ \forall\eps>0\ \forall i\in\I\e:\\ 
			\|\Res f - \P\e \J\e  \Res f\|_{H^1(D\ie)}^2  \leq C d\ie^2\left(\| Qf\|^2_{L^2(D\ie)} + \|Q\Res f\|_{L^2(D\ie)}^2\right),\ \ \forall f\in\HS.
		\end{multline}
		(recall that the assumptions imposed on $Q$ ensure that  
		$Qf\in L^2(\Omega)$ provided $f\in \HS$, cf.~\eqref{QL2}).
		Indeed, the function $\Res f$ satisfies 
		$-\Delta \Res f + {Q} \Res f = {Q} f$ in $D\ie$, while the function
		$\P\e\J\e \Res f$ is harmonic in $D\ie$ (see the definition of   $\P\e$), therefore,   
		$w\e\ceq \Res f - \P\e\J\e \Res f$ satisfies
		\begin{gather}\label{w1}
			-\Delta w\e=  {Q}(f-\Res f)\text{ in }D\ie.
		\end{gather}
		Furthermore,  $\Res f$ and $\P\e\J\e \Res f$ coincide on $\partial D\ie$
		(since $\P\e u= u $ on $\Omega\e$),
		whence 
		\begin{gather}\label{w2}
			w\e=0\text{ on }\partial D\ie.
		\end{gather}
		The solution $w\e$ to \eqref{w1}--\eqref{w2} satisfies the following standard estimates:
		\begin{align}\label{w3}
			\|w\e\|_{L^2(D\ie)}^2&\leq  ( \Lambda_{\rm D}(D\ie)) ^{-2}\| {Q}(f-\Res f)\|^2_{L^2(D\ie)},
			\\ 
			\|\nabla w\e\|_{L^2(D\ie)}^2&\leq  (\Lambda_{\rm D}(D\ie))^{-1} \| {Q}(f-\Res f)\|^2_{L^2(D\ie)}.
		\end{align}
		Hereinafter, by $ \Lambda_{\rm D}(D)$ we denote the smallest eigenvalue of the Dirichlet Laplacian on a bounded domain $D$. 
		Evidently, one has
		\begin{gather}\label{LambdaD}
			\Lambda_{\rm D}(D\ie)=d\ie^{-2}  \Lambda_{\rm D}(\D\ie)\geq d\ie^{-2}\Lambda_{\rm D}(\B(1,0))
		\end{gather}		
		(the latter inequality follows from the domain monotonicity of  Dirichlet eigenvalues).	From \eqref{w3}--\eqref{LambdaD} we conclude the bound \eqref{aux:ext}.
		
		Taking into account that 
		$$\Res\P\e \psi\ed-\P\e\J\e\Res\P\e \psi\ed =0\text{ in }\Omega\e,$$ 
		and using \eqref{aux:ext}, \eqref{QL2}, \eqref{Rff}, \eqref{P:est1},
		and $\|\psi_{\eps,\delta}\|_{\HS\e}=1$,   
			we obtain
		\begin{align}\notag
			J_{1,\eps}^2
			&\le C \sum_{i\in\I\e} d\ie^2\Big(\|Q\P\e\psi\ed\|^2_{L^2(D\ie)}+ \|Q\Res \P\e\psi\ed\|^2_{L^2(D\ie)} \Big)\notag\\\notag & 
			\leq
			 Cd\e^2\Big(\|Q\P\e\psi\ed\|^2_{L^2(\Omega)}+ \|Q\Res \P\e\psi\ed\|^2_{L^2(\Omega)} \Big)\\&\leq C_1d\e^2\Big(\|\P\e\psi\ed\|^2_{\HS}+\|\Res\P\e\psi\ed\|^2_{\HS} \Big)\leq C_2d\e^2 \|\P\e\psi\ed\|^2_{\HS}\leq
			C_3d\e^2  .\label{QQ1}
		\end{align} 
		
		To estimate the term $J_{2,\eps}$ we use \eqref{P:est1}, Theorem~\ref{th1}, and $\|\psi_{\eps,\delta}\|_{\HS\e}=1$:
		\begin{align}\notag
			J_{2,\eps}\leq C\| \J\e\Res\P\e\psi\ed  -     \Res\e\J\e\P\e\psi\ed\|_{\HS\e}
			&\leq C_1{\,\max\left\{\kappa\e,\,  (\sup_{i\in\I\e}d\ie\zeta\ie)^{1/2}\right\}}\|\P\e\psi\ed\|_{\HS}\\\label{QQ2}
			&\leq C_2{\,\max\left\{\kappa\e,\,  (\sup_{i\in\I\e}d\ie\zeta\ie)^{1/2}\right\}}.
		\end{align}
		
		Combining \eqref{Q1Q2}, \eqref{QQ1}, \eqref{QQ2}, we obtain
		\begin{align}\notag
			\forall z\in \sigma(\Res\e)\ \forall\delta>0 :\quad    \dist(z,\,\sigma(\Res)) 
			\leq C\left({\max\left\{\kappa\e,\,  (\sup_{i\in\I\e}d\ie\zeta\ie)^{1/2}\right\}}
			+\delta\right).  
		\end{align}
		Passing to the limit $\delta\to 0$ in the above estimate, we get \eqref{distH:est1}.  The lemma is proven.
	\end{proof}    
	
	\begin{lemma}\label{lemma:Haus2}
		One has the following estimate:
		\begin{gather}\label{distH:est2}
			\forall z\in\sigma(\Res ):\quad \dist(z,\sigma(\Res\e))\leq 
			C{\,\max\left\{\kappa\e,\,  (\sup_{i\in\I\e}d\ie\zeta\ie)^{1/2}\right\}}, 
		\end{gather}
		where the constant $C>0$ is independent of $\eps$ and $z$. 
	\end{lemma}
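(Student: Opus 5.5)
We mirror the proof of Lemma~\ref{lemma:Haus1}, with the roles of $\Res$ and $\Res\e$ interchanged, using $\J\e$ in place of $\P\e$. By self-adjointness of $\Res\e$ on $\HS\e$ one has, as in \eqref{Haus1a},
\begin{gather*}
\dist(z,\sigma(\Res\e))\le \frac{\|(\Res\e-z\Id)\phi\|_{\HS\e}}{\|\phi\|_{\HS\e}},\qquad \forall\phi\in\HS\e\setminus\{0\}.
\end{gather*}
Fix $z\in\sigma(\Res)$ and $\delta>0$. Choose $\psi\ed\in\HS$ with $\|\psi\ed\|_{\HS}=1$ and $\|(\Res-z\Id)\psi\ed\|_{\HS}\le\delta$. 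We then take $\phi=\J\e\psi\ed$ as the test function.

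\textbf{Step 1 (numerator).} Split
\begin{gather*}
(\Res\e-z\Id)\J\e\psi\ed=(\Res\e\J\e-\J\e\Res)\psi\ed+\J\e(\Res-z\Id)\psi\ed .
\end{gather*}
The first term is bounded by Theorem~\ref{th1} by $C\max\{\kappa\e,(\sup_i d\ie\zeta\ie)^{1/2}\}$. For the second term we need $\|\J\e g\|_{\HS\e}\le C\|g\|_{\HS}$. This follows from \eqref{th1:norms}, the trivial bound $\|\J\e g\|_{H^1(\Omega\e)}\le\|g\|_{H^1(\Omega)}$, and \eqref{norm:equivs}. Hence the second term is at most $C\delta$.

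\textbf{Step 2 (denominator, main obstacle).} We need $\|\J\e\psi\ed\|_{\HS\e}\ge c>0$ uniformly in $\eps$ and $\delta$. Restriction can lose mass on the holes, so this is the step requiring care. I would use \eqref{th1:norms} to get
\begin{gather*}
\|\J\e\psi\ed\|_{\HS\e}\ge C_1\|\psi\ed\|_{H^1(\Omega\e)},
\end{gather*}
and then show
\begin{gather*}
\|\psi\|_{H^1(\Omega)}^2-\|\psi\|_{H^1(\Omega\e)}^2=\|\psi\|^2_{H^1(\cup_i D\ie)}\le o(1)\|\psi\|_{H^1(\Omega)}^2 .
\end{gather*}
Lemma~\ref{lemma1+} with \eqref{dQd} and Lemma~\ref{lm:4:2} give $\|\psi\|^2_{L^2(\cup D\ie)}\le C\sup_i d\ie\|\psi\|^2_{H^1(\Omega)}$, which is small. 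The gradient part $\|\nabla\psi\|^2_{L^2(\cup D\ie)}$ is not small for general $H^1$ functions.

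To avoid this, first replace $\psi\ed$ by a regularized quasimode. If $z=0$, we use $0\in\sigma(\Res\e)$ directly. If $z\ne0$, set $\tilde\psi=z^{-1}\Res\psi\ed$. Then $\|\tilde\psi-\psi\ed\|_{\HS}\le|z|^{-1}\delta$, and $\tilde\psi$ is again a $C\delta/|z|$-quasimode. By \eqref{el:reg} and Lemma~\ref{lemma1+} applied to $\partial_j\tilde\psi$ (exactly as in Lemma~\ref{lemma:I3}),
\begin{gather*}
\|\nabla\tilde\psi\|^2_{L^2(\cup D\ie)}\le C d\e|z|^{-2} .
\end{gather*}
The factor $|z|^{-1}$ is harmful for small $z$. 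To handle it, split into cases. If $|z|\le M\eta\e$, where
\begin{gather*}
\eta\e\ceq\max\{\kappa\e,(\sup_i d\ie\zeta\ie)^{1/2}\},
\end{gather*}
then $\dist(z,\sigma(\Res\e))\le|z|\le M\eta\e$, since $0\in\sigma(\Res\e)$. Otherwise $|z|^{-2}d\e\le M^{-2}d\e/\eta\e^{2}\le M^{-2}$, since $d\e\le\sup_i d\ie\zeta\ie\le\eta\e^2$. For $M$ large this makes the lost mass a small fraction, so $\|\J\e\tilde\psi\|_{\HS\e}\ge c\|\tilde\psi\|_{\HS}\ge c/2$ for $\delta$ small.

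\textbf{Step 3 (conclusion).} Combining Steps 1 and 2 gives $\dist(z,\sigma(\Res\e))\le C(\eta\e+\delta/|z|)$ in the second case. Letting $\delta\to0$ yields \eqref{distH:est2}.
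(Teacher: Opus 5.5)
Your proposal is correct and is essentially the paper's proof. You test $\Res\e-z$ on the $\J\e$-restriction of a Weyl quasimode, bound the numerator with Theorem~\ref{th1}, and bound the denominator from below using the $H^2$-regularity \eqref{el:reg} of $\Res\psi$ together with Lemma~\ref{lemma1+}; the paper does this through the identity $\J\e\psi_\delta=z^{-1}(\J\e\Res\psi_\delta-\J\e g_\delta)$ rather than switching to $\tilde\psi=z^{-1}\Res\psi$. Small $z$ is handled via $0\in\sigma(\Res\e)$ in both, and the remaining differences are cosmetic: the paper's cutoff $z\ge C'd\e^{1/2}$ versus your $|z|>M\eta\e$, both valid since $d\e\le\eta\e^2$.
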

	
	\begin{proof}
		Similarly to \eqref{Haus1a}, one has
		\begin{gather}\label{Haus1}
			\forall z\in\C\quad \forall\psi\in\HS\e\setminus\{0\} \colon \quad 
			\dist(z,\sigma(\Res\e ))
			\leq \frac{\|(\Res\e -z\Id)\psi\|_{\HS\e}}{\|\psi\|_{\HS\e}}.
		\end{gather}
		
		Recall that $\J\e:\HS\to\HS\e$ is defined by $\J\e f = f\restr_{\Omega\e}$.
		From \eqref{norm:equivs}, \eqref{th1:norms} we infer
		\begin{gather}\label{Je:norm}
			\|\J\e f\|_{\HS\e}\leq 
			C\|\J\e f\|_{H^1(\Omega\e)}\leq 
			C\|f\|_{H^1(\Omega)}\leq C_1\|f\|_{\HS},\ 
			\forall f\in\HS.
		\end{gather}
		
		Let us now \emph{fix }
		\begin{gather}\label{C'}
			z\in\sigma(\Res)\cap [C'd\e^{1/2},1],
		\end{gather}
		where  the constant $  C'>0$  will be specified later on.
		Since $z$ belongs to the spectrum of $\Res$,  
		for each $\delta>0$ there exists 
		$\psi_\delta\in \HS$ such that 
		\begin{gather}\label{Haus2}
			\|\Res \psi_\delta - z\psi_\delta\|_{\HS}<\delta,\quad \|\psi_\delta\|_{\HS}=1.
		\end{gather}
		In the following we consider only $\delta$   satisfying
		\begin{gather*}
			\delta<z/2.
		\end{gather*}
		
		Using \eqref{Haus1}--\eqref{Haus2} and   Theorem~\ref{th1}, we obtain
		\begin{align}\notag
			\dist(z,\,\sigma(\Res\e)) 
			&\leq  
			\frac{\|(\Res\e-z\Id) \J\e \psi_\delta\|_{\HS\e}}
			{\| \J\e \psi_\delta\|_{\HS\e}} \leq 
			\frac{\|(\Res\e\J\e - \J\e   \Res)\psi_\delta\|_{\HS\e}+
				\| \J\e (\Res - z\Id)\psi_\delta\|_{\HS\e}}
			{\| \J\e \psi_\delta\|_{\HS\e}} \\
			\label{Haus6}
			&\leq
			C_2 \,\frac{{ \max\left\{\kappa\e,\,  (\sup_{i\in\I\e}d\ie\zeta\ie)^{1/2}\right\}}+\delta}{\| \J\e \psi_\delta\|_{\HS\e}},
		\end{align}
		where $C_2$ is a constant, which is independent of $\eps$ and $\delta$.
		
		It remains  to estimate the norms  $\| \J\e \psi_\delta\|_{\HS\e}$ from below.
		We denote
		$g_\delta\ceq\Res\psi_\delta-z\psi_\delta.$
		Using \eqref{Je:norm} and \eqref{Haus2} we get
		\begin{align}\notag
			\| \J\e \psi_\delta\|_{\HS\e}=
			z^{-1}\| \J\e \Res\psi_\delta-\J\e g_\delta\|_{\HS\e}
			&\geq z^{-1}\left(\| \J\e \Res\psi_\delta \|_{\HS\e}-
			\| \J\e g_\delta\|_{\HS\e}\right)\\
			&\geq z^{-1}\left(\| \J\e \Res\psi_\delta \|_{\HS\e}-
			C_1\delta\right)  , 
			\label{Haus3}
		\end{align}
		where  $C_1$ is a constant from \eqref{Je:norm}.
		We have
		\begin{align}\notag \| \J\e \Res\psi_\delta \|_{\HS\e}^2&\ge
			C_3\|\J\e \Res\psi_\delta\|^2_{H^1(\Omega\e)}
			=C_3\| \Res\psi_\delta\|^2_{H^1(\Omega)}-C_3\sum_{i\in\I\e}\| \Res\psi_\delta\|^2_{H^1(D\ie)}
			\\\notag
			&\geq C_4\| \Res\psi_\delta\|^2_{\HS}-C_3\sum_{i\in\I\e}\|\Res\psi_\delta\|^2_{H^1(D\ie)}
			=C_4\|z\psi_\delta + g_\delta\|^2_{\HS}-C_3\sum_{i\in\I\e}\| \Res\psi_\delta\|^2_{H^1(D\ie)}\\ 
			&\geq C_4(z-\|g_\delta\|_{\HS})^2-C_5  d\e\|\Res\psi_\delta\|^2_{H^2(\Omega)}
			\geq C_4z^2/4-C_6d\e,
			\label{Haus4}
		\end{align}  
		where 
	to get the first inequality  we used \eqref{th1:norms},
	to get the second one we utilized \eqref{norm:equivs},
	to get the third one we used the  estimate
	\begin{align*}
		\sum_{i\in\I\e}\|u\|^2_{L^2(D\ie)} 
		&\leq C_0\sum_{i\in\I\e}d\ie\left(Q\ie\| u\|^2_{L^2(R\ie\setminus\overline{B\ie})}+d\ie\zeta\ie\|\nabla u\|^2_{L^2(R\ie)}\right) 
		\leq 
		C d\e \|u\|^2_{H^1(\Omega)},\ \forall u\in H^1(\Omega)
	\end{align*}
	(it follows from Lemmas~\ref{lemma1+}, \ref{lm:4:1}, \ref{lm:4:2}),
	and on the last step we used \eqref{el:reg}, $\|\psi_\delta\|_{\HS}=1$ and the bound
		$\|g_\delta\|_{\HS}<\delta<z/2$. 
		Note that the constant $C_4$ and $C_6$ standing in the right-hand-side of \eqref{Haus4} are independent of $z$ and $\delta$.
		Now, we choose the constant $ C'$ from \eqref{C'} by $$C'\ceq 2\sqrt{(C_6+1)/C_4}.$$ With this choice of $C'$ the right-hand-side of \eqref{Haus4}
		is strictly positive -- it is bounded from below by $d\e$. Then, combining \eqref{Haus3} and \eqref{Haus4}, and taking into account \eqref{C'}, we obtain 
		\begin{align}\label{Haus5}
			\|\J\e\psi_\delta\|_{\HS\e}&\ge 
			\sqrt{ C_4/4-C_6 d\e z^{-2}}-C_1\delta z^{-1}\geq
			C_7-C_1\delta z^{-1}, 
		\end{align}
		where the independent on $z$ and $\delta$ constant $C_7>0$ is given by $$C_7\ceq \sqrt{C_4/4- C_6/(C')^2}=\sqrt{C_4(C_6+1)^{-1}}/2.$$
		In the following, we choose 
		\begin{gather*} 
			\delta<C_7 z/C_1.
		\end{gather*}
		With this restriction on $\delta$ the right-hand side of  \eqref{Haus5} is positive.
		Then, combining \eqref{Haus6} and \eqref{Haus5},
		we arrive at the estimate
		\begin{multline*}
			\forall  z\in\sigma(\Res)\cap [ C'd\e^{1/2},1]\ \forall \delta\in (0,\min\{z/2,
			{C_7 z/C_1}\} ]:\\    \dist(z,\,\sigma(\Res\e)) 
			\leq
			C_2\,\frac{{\max\left\{\kappa\e,\,  (\sup_{i\in\I\e}d\ie\zeta\ie)^{1/2}\right\}}+\delta}{C_7-C_1\delta z^{-1}},
		\end{multline*}
		whence, passing to the limit $\delta\to 0$, we get
		\begin{align}\label{zin1}
			\forall  z\in\sigma(\Res)\cap [C'd\e^{1/2},1]:\quad
			\dist(z,\,\sigma(\Res\e)) 
			\leq
			C\,{\max\left\{\kappa\e,\,  (\sup_{i\in\I\e}d\ie\zeta\ie)^{1/2}\right\}}.
		\end{align}
		Since $0\in\sigma(\Res\e)$, we  have the trivial property
		\begin{align}\label{zin2}
			\forall  z\in\sigma(\Res)\cap [0,C'd\e^{1/2}):\quad
			\dist(z,\,\sigma(\Res\e)) 
			\leq
			C'd\e^{1/2}.
		\end{align}
		From \eqref{zin1}--\eqref{zin2} we conclude the desired bound \eqref{distH:est2}.
		The lemma is proven.
	\end{proof}
	
	The desired estimate \eqref{th2:est} follows immediately from \eqref{distH:est1}, \eqref{distH:est2} and the definition of the Hausdorff distance.
	Theorem~\ref{th2} is proven.

	\section*{Acknowledgements}
	The authors are grateful to Jean Lagacé for  fruitful discussions on this problem.
	The first author is grateful to Excellence Project FoS UHK 2204/2025-2026 for the financial support. This research was initiated during the second author’s visit to the University of Hradec Králové in November 2023, and he gratefully acknowledges the hospitality extended to him during this visit.

\end{document}